\title{The motive of a classifying space}
\author{Burt Totaro}
\date{  }
\def\Z{\text{\bf Z}}
\def\Q{\text{\bf Q}}
\def\C{\text{\bf C}}
\def\P{\text{\bf P}}
\def\F{\text{\bf F}}
\def\arrow{\rightarrow}
\def\inj{\hookrightarrow}
\def\imp{\Rightarrow}
\def\surj{\twoheadrightarrow}
\def\Tor{\text{Tor}}
\def\Hom{\text{Hom}}
\def\HHom{\underline{\text{Hom}}}
\newcommand{\dlim}{\mathop{\varinjlim}\limits}
\def\gm{\text{gm}}
\def\hocolim{\text{hocolim}}
\def\holim{\text{holim}}
\def\Spec{\text{Spec}}
\def\tr{\text{tr}}
\def\nr{\text{nr}}
\def\Gal{\text{Gal}}
\def\bir{\text{bir}}
\def\perf{\text{perf}}
\def\dim{\text{dim}}
\def\codim{\text{codim}}
\def\Corr{\text{Corr}}
\def\Var{\text{Var}}
\def\cone{\text{cone}}
\def\eff{\text{eff}}
\begin{document}
\maketitle
\newtheorem{theorem}{Theorem}[section]
\newtheorem{corollary}[theorem]{Corollary}
\newtheorem{lemma}[theorem]{Lemma}

\theoremstyle{definition}
\newtheorem{definition}[theorem]{Definition}
\newtheorem{example}[theorem]{Example}

\theoremstyle{remark}
\newtheorem{remark}[theorem]{Remark}

The Chow group of algebraic cycles generally does not
satisfy the K\"unneth formula. Nonetheless, there are some
schemes $X$ over a field $k$ which
satisfy the {\it Chow K\"unneth property }that the product
$CH_*X\otimes_{\Z}CH_*Y \arrow CH_*(X\times_k Y)$ is an isomorphism
for all separated schemes $Y$ of finite type over $k$.
The Chow K\"unneth property implies
the {\it weak Chow K\"unneth property }that $CH_*X\arrow CH_*(X_F)$
is surjective for every finitely generated field $F$ over $k$
(or, equivalently, for every extension field $F$ of $k$).
We characterize several properties
of this type. (We also prove versions of all our results with
coefficients in a given commutative ring.)

Our characterizations of K\"unneth properties
are: first, a smooth proper scheme $X$ over $k$ satisfies
the weak Chow K\"unneth property
if and only if the Chow motive
of $X$ is a summand of a direct sum of {\it Tate motives}
(Theorem \ref{smoothproper}).
(This is related to known
results by Bloch, Srinivas, Jannsen, Kimura, and others.)
A more novel result is about an arbitrary separated scheme $X$
of finite type over $k$. We say that $X$
satisfies the {\it motivic K\"unneth property}
if the K\"unneth spectral sequence converges
to the motivic homology groups of $X\times_k Y$ for all $Y$. (Motivic
homology groups are also called {\it higher Chow groups}; they include
the usual Chow groups as a special case.)
We show that a $k$-scheme $X$ satisfies the motivic K\"unneth property
if and only if the motive of $X$ in Voevodsky's derived category
of motives is a {\it mixed Tate motive} (Theorem \ref{main}).
(An example of a scheme with these properties is any {\it linear scheme},
as discussed in section \ref{motives}.)
Finally, if a smooth but
not necessarily proper $k$-variety $X$
satisfies the weak Chow K\"unneth property,
then the {\it birational motive }of $X$
in the sense of Rost and Kahn-Sujatha is isomorphic to the birational
motive of a point (Corollary \ref{biratcor}).

The last result cannot be 
strengthened to say that the motive of $X$ is mixed Tate; one has
to consider motivic homology groups to get that conclusion. For example,
the complement $X$
of a curve of genus 1 in the affine plane has the Chow K\"unneth
property, since $CH_{i+2}(X\times_k Y)\cong CH_iY$
for all separated $k$-schemes $Y$ of finite type and all $i$;
but the motive of $X$ is not mixed Tate.

As an application of these general results, we disprove the
weak Chow K\"unneth property for some classifying spaces $BG$.
For an affine group scheme $G$ of finite type over a field $k$,
Morel-Voevodsky and I constructed
$BG$ as a direct limit of smooth $k$-varieties,
quotients by $G$ of open subsets of representations of $G$ over $k$
\cite[section 4.2]{MV},
\cite{Totarochow, Totarobook}. As a result,
the Chow ring of $BG$ makes sense. The Chow ring of $BG$ tensored
with the rationals is easy to compute; for example, if $G$ is finite,
then $CH^i(BG)\otimes\Q=0$ for $i>0$.
The challenge
is to understand the integral or mod $p$ Chow ring of $BG$.

For many finite groups $G$ and fields
$k$, the classifying
space $BG$ over $k$ satisfies the Chow K\"unneth property that
$CH^*BG\otimes_{\Z}CH_*Y\cong CH_*(BG\times_k Y)$
for all separated $k$-schemes
$Y$ of finite type. For example, 
an abelian $p$-group $G$ of exponent $e$ has the Chow K\"unneth property
when $k$ is a field of characteristic not
$p$ that contains the $e$th roots of unity. The Chow K\"unneth
property also holds for many other groups,
such as wreath products of abelian groups \cite[Lemma 2.12]{Totarobook}.
As a result,
\cite[Chapter 17]{Totarobook} asked whether every finite group $G$
has the Chow K\"unneth property over a field $k$ which contains
enough roots of unity. This would imply the weak Chow K\"unneth
property that $CH^*BG_k\arrow CH^*BG_F$ is surjective for
every extension field $F$ of $k$.

In this paper, we give the first examples of finite groups
for which the Chow K\"unneth property fails. For any finite group $G$ 
such that
$BG$ has nontrivial unramified cohomology, there is a finitely generated
field $F$ over $\overline{\Q}$ such that $CH^*BG_{\overline{\Q}}
\arrow CH^*BG_F$ is not surjective (Corollary \ref{BG}).
We also find a field $E$ containing $\overline{\Q}$ such that
$CH^i(BG_E)/p$ is infinite for some $i$ and some prime number $p$
(Corollary \ref{infinite}); this answers
another question in \cite[Chapter 18]{Totarobook}. In particular,
the ring $CH^*(BG_E)/p$ is not noetherian in such an example.

As recalled in section \ref{birat}, there are groups
of order $p^5$ for any
odd prime $p$, and groups of order $2^6$, which have nontrivial
unramified cohomology. This is surprisingly sharp.
In fact, the Chow ring
$CH^*BG_k$ of a $p$-group $G$ is independent of the field $k$ containing
$\overline{\Q}$, and consists
of transferred Euler classes of representations, when $G$ is
a $p$-group of order at most $p^4$
\cite[Theorem 11.1, Theorem 17.4]{Totarobook}. Moreover,
the weak Chow K\"unneth property holds
for all groups of order $2^5$ (Theorem \ref{order32}).

Finally, section \ref{quotientsect}
defines the {\it compactly supported motive},
in Voevodsky's derived category of motives, for a
quotient stack over a field. In particular, we get a notion
of the compactly supported motive $M^c(BG)$ for an affine group scheme $G$.
Once we have this definition, we can ask when $M^c(BG)$ is a mixed
Tate motive. This property is equivalent to the motivic K\"unneth
property for $BG$, and so it implies the Chow K\"unneth property
for $BG$. In particular, $BG$ is not mixed Tate for the groups
of order $p^5$ discussed above. On the other hand, we show that
many familiar finite groups, such as
the finite general linear groups in cross-characteristic
and the symmetric groups,
are mixed Tate (Theorems \ref{symmetric} and \ref{gl}).

The introduction to section \ref{TateBGsect}
discusses six properties of finite groups. It would be interesting
to find out whether all six properties are equivalent,
as the known examples suggest. The properties are:
stable rationality of $BG$ (say, over the complex numbers), meaning
stable rationality of quotient varieties $V/G$; triviality
for the birational motive of $BG$ (or equivalently, of quotient
varieties $V/G$);
Ekedahl's class of $BG$
in the Grothendieck ring of varieties being equal to 1
\cite{Ekedahl}; the weak
Chow K\"unneth property of $BG$; the Chow K\"unneth property of $BG$;
and the mixed Tate property of $BG$.

I thank Christian Haesemeyer, Tudor Padurariu, and a referee
for their suggestions.
This work was supported by NSF grant DMS-1303105.

\tableofcontents

\section{Notation}

A {\it variety }over a field $k$ means an integral separated scheme
of finite type over $k$. A variety $X$ over $k$ is {\it geometrically
integral }if $X_{\overline{k}}:=X\times_{\Spec(k)} \Spec(\overline{k})$
is an integral scheme 
(where $\overline{k}$ is an algebraic closure of $k$),
or equivalently if $X_E$ is integral for every extension field
$E$ of $k$ \cite[Definition IV.4.6.2]{EGAIV2}.

Let $X$ be a scheme of finite type over a field $k$.
The {\it Chow group }$CH_iX$ is the group of $i$-dimensional algebraic cycles
on $X$ modulo rational equivalence. A good reference
is Fulton \cite[Chapter 1]{Fulton}.
We write $CH_i(X;R)=CH_i(X)\otimes_{\Z}R$
for a commutative ring $R$.

For a smooth scheme $X$ over $k$, understood to be
of finite type over $k$, we write $CH^iX$
for the Chow group of codimension-$i$ cycles on $X$. For $X$ smooth over $k$,
the groups $CH^*X$ have a ring structure given by intersecting cycles
\cite[Chapter 6]{Fulton}.

\section{Birational motives}
\label{birat}

In this section, we give several equivalent characterizations
of those smooth proper varieties $X$ whose birational motive
in the sense of Rost \cite[Appendix RC]{KM}
and Kahn-Sujatha \cite[equation (2.5)]{KS}
is isomorphic to the
birational motive of a point. The statement includes Merkurjev's theorem
that the Chow group of 0-cycles on $X$ is unchanged under field extensions
if and only if the unramified cohomology of $X$ in the most general
sense is trivial
\cite{Merkurjev}.
It seems to be new that these properties are also equivalent
to all the Chow groups of $X$ being supported on a divisor. Note that
these properties are not equivalent to $CH_0$ being supported
on a divisor; for example, the product of $\P^1$ with a curve $C$
of genus at least 1 has $CH_0$ supported on a divisor, while $CH_1$
is not supported on a divisor. Also, unlike many earlier results
in this area, we work with an arbitrary coefficient ring, not just
the rational numbers.

We will use the equivalence of Theorem \ref{equiv} to give the first
counterexamples to the Chow K\"unneth property for the classifying
space of a finite group over an algebraically closed field
(section \ref{failure}).

\begin{theorem}
\label{equiv}
Let $X$ be a smooth proper variety over a field $k$,
and let $R$ be a nonzero commutative ring.
The following are equivalent.

(1) For every finitely generated field $F/k$, $CH_0(X;R)
\arrow CH_0(X_F;R)$
is surjective.

(2) For every field $F/k$, $CH_0(X;R)
\arrow CH_0(X_F;R)$
is an isomorphism, and both groups map isomorphically to $R$
by the degree.

(3) The birational motive of $X$ (in the sense of Kahn-Sujatha)
with $R$ coefficients
is isomorphic to the birational motive of a point.

(4) For every $R$-linear cycle module $M$ over $k$ (in the sense
of Rost \cite{Rost}), the homomorphism
$M(k)\arrow M(k(X))_{\nr}$ is an isomorphism. (That is, $X$
has trivial unramified cohomology in the most general sense.)

(5) There is a closed subset $S\subsetneq X$ such that
$$CH_i(X;R)/CH_i(S;R)\arrow CH_i(X_F;R)/CH_i(S_F;R)$$
is surjective for all
finitely generated fields $F/k$ and all integers $i$.
(That is, all the Chow groups 
of $X$ are constant outside a divisor.)

(6) The variety $X$ is geometrically integral,
and there is a closed subset $S\subsetneq X$ such that
$CH_i(X_F;R)/CH_i(S_F;R)=0$ for all fields $F/k$
and all $i<\dim(X)$.
\end{theorem}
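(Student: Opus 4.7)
My proof plan centers on a Bloch--Srinivas style decomposition of the diagonal (valid for an arbitrary coefficient ring $R$, not just rationally) that simultaneously yields (2) and (6) from (1); the remaining implications are either trivial or standard translations. Specifically, (2) $\Rightarrow$ (1) and (6) $\Rightarrow$ (5) are immediate; (5) $\Rightarrow$ (1) follows by Noetherian induction on $\dim X$ combined with the localization sequence for $CH_0$, noting that (5) at $i = 0$ amounts to $CH_0(U; R) \surj CH_0(U_F; R)$ for the open complement $U = X \setminus S$. The equivalence (1) $\Leftrightarrow$ (4) is Merkurjev's theorem as cited in the introduction, and (1) $\Leftrightarrow$ (3) follows from the Kahn--Sujatha formula $\Hom_{\bir}(h^{\bir}(Y), h^{\bir}(X)) = CH_0(X_{k(Y)})$ for smooth proper $X$, $Y$: the identity on $h^{\bir}(X)$ factoring through $h^{\bir}(\Spec k)$ is equivalent to the class $[\eta_X] \in CH_0(X_{k(X)}; R)$ of the generic point lying in the image from $CH_0(X; R)$.

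For the main step, apply (1) to the finitely generated extension $F = k(X)$: there is a class $\alpha \in CH_0(X; R)$ whose base change $\alpha_{k(X)}$ equals $[\eta]$ in $CH_0(X_{k(X)}; R)$, and since degrees are preserved under base change one has $\deg \alpha = \deg \eta = 1$. Interpreting $[\eta]$ as the restriction of $[\Delta_X] \in CH_n(X \times X; R)$ to the generic fiber of each projection $p_1, p_2 \colon X \times X \to X$ (with $n = \dim X$), the localization sequence yields proper closed subsets $S, S' \subsetneq X$ and a pair of decompositions in $CH_n(X \times X; R)$:
\[
\Delta_X \;=\; X \times \alpha + \Gamma \;=\; \alpha \times X + \Gamma',
\]
with $\Gamma$ supported on $S \times X$ and $\Gamma'$ on $X \times S'$.

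To deduce (2), apply the first decomposition as a correspondence to $\beta \in CH_0(X_F; R)$ for arbitrary $F/k$: the projection formula gives $(X \times \alpha)_*(\beta) = (\deg \beta)\,\alpha_F$, while $\Gamma_*(\beta) = 0$ after a moving-lemma argument (a $0$-cycle can be moved in rational equivalence to miss the positive-codimension subset $S$). Hence $\beta = (\deg \beta)\,\alpha_F$, so $CH_0(X_F; R) = R \cdot \alpha_F \cong R$ via degree, and the base change map is an isomorphism. To deduce (6), apply the second decomposition to $\beta \in CH_i(X_F; R)$ with $i < n$: then $(\alpha \times X)_*(\beta) = (p_2)_* p_1^*(\alpha \cdot \beta) = 0$ since $\alpha \cdot \beta$ lies in $CH_{i - n}(X_F; R) = 0$; and $\Gamma'_*(\beta)$ lies in the image of $CH_i(S'_F; R) \to CH_i(X_F; R)$, because $\Gamma'$ is supported on $X \times S'$ and $p_2 \colon X \times S' \to X$ factors through $S' \hookrightarrow X$. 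Thus $\beta = \Delta_*(\beta) = \Gamma'_*(\beta)$ is supported on $S'_F$ modulo rational equivalence. Geometric integrality of $X$ follows by applying (1) with $F$ an algebraic closure and observing that $CH_0(X_{\bar k}; R) \cong R$ forces $X_{\bar k}$ to have a single irreducible component, since each component would otherwise contribute an independent degree-$1$ generator.

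The main obstacle I anticipate is pushing the Bloch--Srinivas computations through with an arbitrary commutative ring $R$ in place of $\Q$, and navigating potential singularities of $S$ and $S'$ when performing the projection-formula manipulations---in particular, verifying that $\Gamma'$ really acts with image in the pushforward of $CH_i(S'_F; R)$ even when $S'$ is singular, and that the moving argument required for $\Gamma_*(\beta) = 0$ goes through without needing $X$ to be quasi-projective.
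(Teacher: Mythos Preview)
Your diagonal-decomposition arguments for (1) $\Rightarrow$ (2), (1) $\Rightarrow$ (6), and the equivalences (2) $\Leftrightarrow$ (3) $\Leftrightarrow$ (4) via Kahn--Sujatha and Merkurjev are correct and essentially match the paper (the paper uses one decomposition applied in both directions rather than your two transposed decompositions, but this is cosmetic). The technical worries you flag about singularities of $S'$ and the moving argument for $0$-cycles are minor: the correspondence action is defined via refined intersection on the smooth ambient $X\times X$, and moving a $0$-cycle off a proper closed subset of a smooth variety is elementary.

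The genuine gap is your implication (5) $\Rightarrow$ (1). You propose to use only the $i=0$ case of (5) together with ``Noetherian induction on $\dim X$,'' but neither ingredient is available. The $i=0$ case of (5) alone says only that $CH_0(U;R)\to CH_0(U_F;R)$ is surjective for the open $U=X\setminus S$, and this does \emph{not} imply (1): take $X=\P^1\times C$ with $C$ a curve of positive genus and $S=\{\infty\}\times C$, so $U=\mathbf{A}^1\times C$ has $CH_0(U_F;R)=0$ for every $F$, yet $CH_0(X_F;R)\cong CH_0(C_F;R)$ is not generated by $CH_0(X;R)$ when $F=k(C)$. And you cannot invoke an inductive hypothesis on $S$: nothing in (5) tells you that $S$ (or any resolution of it) satisfies any of the equivalent conditions, and in the example $S\cong C$ certainly does not.

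The paper's argument here is the novel part of the proof and is genuinely different. It does \emph{not} reduce to $S$; instead it works inside $X\times X$. One shows by descending induction on $j$, from $j=n$ down to $j=0$, that the class $[\Delta]\in CH_0(X_{k(X)};R)$ comes from a $0$-cycle on $(T_j)_{k(X)}$ for some closed $T_j\subset X$ of dimension $\leq j$. The inductive step takes the $n$-cycle $A_j$ on $X\times T_j$ spreading out $[\Delta]$, restricts to the generic fiber of the \emph{second} projection over each $j$-dimensional component $T_{jr}$ of $T_j$, obtaining an $(n-j)$-cycle on $X_{k(T_{jr})}$, and then applies (5) with $i=n-j$ to push this cycle onto $S_{k(T_{jr})}$ modulo cycles coming from $X$. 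This is exactly where the full strength of (5) for \emph{all} $i$ is used. At $j=0$ one has a $0$-dimensional $T_0$, and geometric integrality of $X$ (which also must be extracted from (5), via the case $i=n$) gives $CH_0((T_0)_{k(X)};R)\cong CH_0(T_0;R)$, yielding the decomposition of the diagonal and hence (1).
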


For the coefficient ring $R=\Q$ which has been considered most often, there
are other equivalent statements: instead of considering all finitely
generated extension fields of $k$ in (1) or (5),
one could consider a single algebraically
closed field of infinite transcendence degree over $k$.
This gives equivalent
conditions, because $CH_*(X_F;\Q)\arrow CH_*(X_E;\Q)$ is injective
for every scheme $X$ over $F$ and every inclusion of fields $F\inj E$.
On the other hand, for the coefficient ring $R=\F_p$ 
which is of most interest for the classifying
space of a finite group, it would not be enough to consider algebraically
closed extension fields in Theorem \ref{equiv}. This follows from Suslin's
rigidity theorem: for every extension of algebraically closed fields
$F\inj E$, every $k$-scheme $X$ over $F$,
and every prime number $p$ invertible in $F$,
$CH_*(X_F;\F_p)$ maps isomorphically to $CH_*(X_E;\F_p)$
\cite[Corollary 2.3.3]{SuslinICM}.

\begin{corollary}
\label{biratcor}
Let $k$ be a perfect field which admits resolution of singularities
(for example, any field of characteristic zero).
Let $U$ be a smooth variety over $k$, not necessarily proper.
Let $R$ be a commutative ring.
If $CH^*(U;R)\arrow CH^*(U_F;R)$ is surjective for every finitely
generated field $F$ over $k$,
then the birational motive of $U$ with coefficients in $R$
is isomorphic to the birational
motive of a point.
\end{corollary}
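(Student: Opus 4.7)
The plan is to reduce to property (5) of Theorem \ref{equiv} applied to a smooth proper compactification of $U$. Since $k$ is perfect and admits resolution of singularities, I would first use Nagata compactification and then desingularization to embed $U$ as a dense open subvariety of a smooth proper variety $X$ over $k$; set $D := X\setminus U$, a proper closed subset. The Kahn-Sujatha birational motive of a smooth proper variety depends only on its birational equivalence class, so the birational motive of $U$ with $R$ coefficients---defined via any smooth proper model of $k(U)$---coincides with that of $X$. It therefore suffices to show that the birational motive of $X$ is that of a point, and by Theorem \ref{equiv} it is enough to verify its condition (5) with $S := D$.

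To do this I would invoke the right-exact localization sequence
\[
CH_i(D;R) \arrow CH_i(X;R) \arrow CH_i(U;R) \arrow 0,
\]
together with its analogue after base change to any extension $F/k$. These identify $CH_i(X;R)/CH_i(D;R) \cong CH_i(U;R)$ and $CH_i(X_F;R)/CH_i(D_F;R) \cong CH_i(U_F;R)$, so the map appearing in Theorem \ref{equiv}(5) is simply $CH_i(U;R)\arrow CH_i(U_F;R)$. Since $U$ is smooth and equidimensional of some dimension $n$, this is the same as $CH^{n-i}(U;R)\arrow CH^{n-i}(U_F;R)$, which is surjective by hypothesis for every finitely generated $F/k$ and every $i$. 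Hence $X$ satisfies (5), so its birational motive---and therefore the birational motive of $U$---is that of a point.

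The only nontrivial input beyond Theorem \ref{equiv} and the Chow localization sequence is the fact that the birational motive of a smooth (not necessarily proper) variety $U$ is well-defined via a smooth proper compactification and independent of the choice; this is precisely where the resolution-of-singularities hypothesis on $k$ is used, since without it a smooth proper model of the function field $k(U)$ need not exist. I expect this conceptual identification to be the main subtlety; the actual Chow-theoretic verification is automatic.
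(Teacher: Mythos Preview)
Your proposal is correct and follows essentially the same route as the paper: compactify $U$ to a smooth proper $X$ with complement $S$, use the localization exact sequence to identify $CH_i(X;R)/CH_i(S;R)$ with $CH_i(U;R)$ (and likewise over $F$), and then invoke condition (5) of Theorem \ref{equiv}. The paper's proof is nearly word-for-word the same, with the only cosmetic difference being that it does not spell out the codimension re-indexing $CH_i \leftrightarrow CH^{n-i}$.
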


\begin{proof}
(Corollary \ref{biratcor})
By resolution of singularities, there is a regular compactification
$X$ of $U$ over $k$, with $U=X-S$ for some closed subset $S$.
Since $k$ is perfect, the regular scheme $X$ is smooth over $k$.
Let us index Chow groups by dimension.
We use the basic exact sequence for Chow groups
\cite[Proposition 1.8]{Fulton}:

\begin{lemma}
\label{basicseq}
Let $X$ be a scheme of finite type over a field $k$.
Let $Z$ be a closed subscheme. Then the proper pushforward and flat pullback
maps fit into an exact sequence
$$CH_i(Z)\arrow CH_i(X)\arrow CH_i(X-Z)\arrow 0.$$
\end{lemma}

In the case at hand, it follows that
$CH_*(U;R)$ is isomorphic to $CH_*(X;R)/CH_*(S;R)$. So the assumption
on $U$ implies condition (5) in Theorem \ref{equiv}. The birational
motive of $U$ is (by definition) the same as that of $X$. By Theorem
\ref{equiv}, the birational motive of $X$ with coefficients in $R$
is isomorphic to the birational motive of a point.
\end{proof}

\begin{proof}
(Theorem \ref{equiv}) 
Assume condition (1). That is, $CH_0(X;R)
\arrow CH_0(X_F;R)$
is surjective for every finitely generated field $F/k$.
Let $n$ be the dimension of $X$.
The generic fiber of the diagonal $\Delta$ 
in $CH_n(X\times_k X)$ via projection to the first copy of $X$
is a zero-cycle in $CH_0X_{k(X)}$. By our assumption, the class
$[\Delta]$ in $CH_0(X_{k(X)};R)$
is the image of some zero-cycle $\alpha\in CH_0(X;R)$. For a variety $Y$
over $k$, the Chow groups of $X_{k(Y)}$ are the direct limit of the Chow
groups of $X\times_k U$, where $U$ runs over all nonempty open subsets
of $Y$. (Note that an $i$-dimensional cycle on $X\times_k U$ gives
a cycle of dimension $i-\dim(Y)$ on the generic fiber $X_{k(Y)}$.)
Therefore, we can write
$$\Delta=X\times\alpha + B$$
in $CH_n(X\times_k X;R)$, where $B$ is a cycle supported on $S\times X$
for some closed subset $S\subsetneq X$. Here we are using
the basic exact sequence for Chow groups (Lemma \ref{basicseq}).

As a correspondence, the diagonal
$\Delta$ induces the identity map from $CH_i(X;R)$
to $CH_i(X;R)$ for any $i$.
For this purpose, think of $\Delta$ as a correspondence from the first
copy of $X$ to the second. It follows that for any extension field $F$
of $k$ and any zero-cycle
$\beta$ in $CH_0(X_F;R)$, $\beta=\Delta_*(\beta)=(X\times\alpha)_*(\beta)
=\deg(\beta)\alpha$. Thus the $R$-module
$CH_0(X_F;R)$ is generated by $\alpha$
for every field $F/k$. Moreover, $\alpha$ has degree 1, and so the degree
map $\deg\colon CH_0(X_F;R)\arrow R$ is an isomorphism. We have proved
condition (2).

Condition (3) is immediate from (2). Namely, for any smooth
proper varieties $X$ and $Y$ over $k$, the set of morphisms
from the birational motive of $X$ (with $R$ coefficients) to the
birational motive of $Y$ is defined to be $CH_0(Y_{k(X)};R)$
\cite[equation (2.5)]{KS}.
So, for a point $p=\Spec(k)$, we have 
\begin{align*}
\Hom_{\bir}(p,p)&=R,\\
\Hom_{\bir}(X,p)&=CH_0(\Spec(k(X));R)=R,\\
\Hom_{\bir}(p,X)&=CH_0(X;R),\\
\Hom_{\bir}(X,X)&=CH_0(X_{k(X)};R).
\end{align*}
By (2), we know that $CH_0(X;R)$ and $CH_0(X_{k(X)};R)$ both map
isomorphically to $R$ by the degree map; so $X$ has the birational
motive of a point. It is now clear that (1), (2), and (3) are equivalent.

When $R=\Z$, Merkurjev proved that (4) is equivalent to (1) and (2)
\cite[Theorem 2.11]{Merkurjev}. The proof works with any coefficient
ring $R$. For example, to see that (3) implies (4), it suffices to
check that an element of $CH_0(Y_{k(X)};R)$ determines a pullback
map from unramified cohomology of $Y$ (with coefficients in any
$R$-linear cycle module over $k$) to unramified cohomology of $X$.

Now we show that (1) (or equivalently, (2), (3), or (4)) implies
(5) and (6). Given (1), we have a decomposition of the diagonal as above,
$$\Delta=X\times\alpha + B$$
in $CH_n(X\times_k X;R)$, where $B$ is a cycle supported on $S\times X$
for some closed subset $S\subsetneq X$. Now use the correspondence
$\Delta$ to pull cycles back from the second copy of $X$ to the first;
again, it induces the identity on Chow groups. It follows that
for any extension field $F$ of $k$ and
any cycle $\beta$ in $CH_i(X_F;R)$ with $i<n$, we have
$\beta=\Delta^*(\beta)=B^*(\beta)$, which is a cycle supported in $S$.
Thus $CH_i(S_F;R)\arrow CH_i(X_F;R)$ is surjective for all $i<n$.

To prove (6), we also have to show that $X$ is geometrically
integral. Since $X$ is smooth and proper over $k$, 
$CH_n(X_{\overline{k}};R)$ is the free $R$-module on the set
of irreducible components of $X_{\overline{k}}$, and the cycle $[X]$
is the element $(1,\ldots,1)$ in this module. But
for any irreducible component $Y$ of $X_{\overline{k}}$, with
class $(1,0,\ldots,0)$ in $CH_n(X_{\overline{k}};R)$, we have
$[Y]=\Delta^*[Y]=(X\times \alpha)^*[Y]\in R\cdot [X]=R\cdot (1,\ldots,1)$
in $CH_n(X_{\overline{k}};R)$. Since
the ring $R$ is not zero,
it follows that $X_{\overline{k}}$ is irreducible.
This proves
(6) and hence the weaker statement (5).

Finally, we prove that (5) implies (1), which will complete the proof.
This part of the argument seems to be new.
We are assuming that there is a closed subset $S\subsetneq X$
such that $CH_i(X;R)
/CH_i(S;R)\arrow CH_i(X_F;R)/CH_i(S_F;R)$ is surjective for all
finitely generated fields $F/k$ and all integers $i$.
Taking $i=n$, it follows that $X$ is geometrically integral (using
that $R$ is not zero).
As above, let
$[\Delta]$ denote the generic fiber in $CH_0(X_{k(X)};R)$ of the diagonal
$\Delta$ in $CH_n(X\times_kX;R)$. We will show
by descending induction on $j$ that for each
$0\leq j\leq n$, there is a closed subset $T_j$ of $X$
of dimension at most $j$
such that $[\Delta]$ is the image of
a zero-cycle $\alpha_j$ on $(T_j)_{k(X)}$. This is clear
for $j=n$, by taking $T_n=X$.

Suppose we have a closed subset $T_j$ and a zero-cycle $\alpha_j$
as above, for an integer $1\leq j\leq n$. Then $\alpha_j$ is the generic
fiber (with respect to the first projection) of some
$n$-dimensional cycle $A_j$ on $X\times_k T_j$. 
Let $T_{j1},\ldots,T_{jm}$ be the irreducible components
of dimension $j$ in $T_j$, and $Z$ the union of any irreducible
components of dimension less than $j$ in $T_j$. We can write
$A_j$ in $CH_n(X\times T_j;R)$ as a sum of cycles $A_{jr}$
supported on $X\times T_{jr}$, for $r=1,\ldots,m$, and a cycle $B_j$
supported on $X\times Z$. The generic fiber of $A_{jr}$
by the second projection is an $(n-j)$-cycle on $X_{k(T_{jr})}$.
By our assumption (5), this cycle is rationally equivalent to the sum of
a cycle on $S_{k(T_{jr})}$ and a cycle coming from an $(n-j)$-cycle
on $X$.
Therefore, $A_{jr}$ is equivalent to a sum of cycles
supported on $X\times Y$ for subvarieties $Y$ of dimension at most $j-1$
and cycles supported on $W\times X$ for closed subsets
$W\subsetneq X$ (using that $j>0$). This proves the inductive step:
$[\Delta]$
in $CH_0(X_{k(X)};R)$ is the image of a zero-cycle
on $(T_{j-1})_{k(X)}$ for some closed subset $T_{j-1}$ of dimension
at most $j-1$ in $X$.

At the end of the induction, we have a zero-dimensional closed subset $T_0$
of $X$ such that the class of $\Delta$ in $CH_0(X_{k(X)};R)$ is
the image of a zero-cycle $\alpha_0$ 
on $(T_0)_{k(X)}$. Here $T_0$ is a finite union of closed points,
which are isomorphic to $\Spec(E)$ for finite extension fields $E$
of $k$. Because $X$ is geometrically integral, $(\Spec(E))_{k(X)}=
\Spec(E\otimes_k k(X))$ is the spectrum of a field. So $CH_0(T_0;R)
\arrow CH_0((T_0)_{k(X)};R)$ is an isomorphism. We conclude that the class
of $\Delta$ in $CH_0(X_{k(X)};R)$ is
in the image of $CH_0(X;R)$. This gives a decomposition of the diagonal
$$\Delta=X\times \alpha + B$$
in $CH_n(X\times_k X;R)$, where $\alpha$ is a zero-cycle on $X$
and $B$ is a cycle supported on $W\times X$
for some closed subset $W\subsetneq X$. This implies statement (2),
by the same argument used to show that (1) implies (2).
Thus all the conditions
are equivalent.
\end{proof}

We now strengthen Theorem \ref{equiv} in a certain direction:
if a variety over a field $k$ has nontrivial unramified cohomology,
then its Chow groups over extension fields
of $k$ have arbitrarily large cardinality (Lemma
\ref{cardinality}). Our proof uses
the language of birational motives.
One could also give a more bare-hands argument.

\begin{lemma}
\label{chowfunctor}
Let $k$ be a field and $R$ a commutative ring.
Let $W_1$ be a variety, $W_2$ a smooth proper variety,
and $X$ a separated scheme of finite type over $k$.
For any integer $r$, there is a natural pairing
$$CH_0((W_2)_{k(W_1)};R)\otimes_R CH_r(X_{k(W_2)};R)\arrow CH_r(X_{k(W_1)};R)$$
which agrees with the obvious pullback when the 0-cycle
on $(W_2)_{k(W_1)}$ is the one 
associated to a dominant rational map $W_1\dashrightarrow W_2$.
As a result, the assignment $W\mapsto CH_r(X_{k(W)};R)$ for smooth proper
varieties $W$ over $k$ extends to
a contravariant functor on the category of birational
motives over $k$ with $R$ coefficients.
\end{lemma}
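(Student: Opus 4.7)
The plan is to define the pairing via correspondence composition through $W_2$ and then restrict to the generic fiber over $W_1$. Given $\alpha\in CH_0((W_2)_{k(W_1)};R)$ and $\beta\in CH_r(X_{k(W_2)};R)$, lift them to cycles $A\in CH_{\dim W_1}(W_1\times_k W_2;R)$ and $B\in CH_{r+\dim W_2}(W_2\times_k X;R)$, using Lemma \ref{basicseq} to identify each target group with a quotient of the corresponding product Chow group by cycles not dominant over the relevant factor. Since $W_2$ is smooth, the closed embedding $\delta\colon W_1\times W_2\times X\inj W_1\times W_2\times W_2\times X$ induced by the diagonal on $W_2$ is a regular embedding of codimension $\dim W_2$ and so carries a refined Gysin pullback $\delta^!$; since $W_2$ is proper, the projection $\pi_{13}\colon W_1\times W_2\times X\arrow W_1\times X$ is proper. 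Set
$$A\circ B := \pi_{13,*}(\delta^!(A\times B))\in CH_{r+\dim W_1}(W_1\times_k X;R),$$
and define the image of $\alpha\otimes\beta$ under the pairing to be the restriction of $A\circ B$ to the generic fiber over $W_1$, an element of $CH_r(X_{k(W_1)};R)$.

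The main task is to show independence from the choice of lifts. Two lifts of $\alpha$ agree after restriction to a common open $U\subset W_1$, so by Lemma \ref{basicseq} their difference is supported on $Z\times W_2$ for some closed $Z\subsetneq W_1$; composition with $B$ produces a cycle supported on $Z\times X$, whose generic fiber over $W_1$ vanishes. For two lifts of $\beta$, the difference $B-B'$ is supported on $T\times X$ for some proper closed subset $T\subsetneq W_2$. The crucial step here is to apply the Chow--Samuel moving lemma for $0$-cycles on the smooth proper $k(W_1)$-variety $(W_2)_{k(W_1)}$: we may represent $\alpha$ by a rationally equivalent $0$-cycle whose support is disjoint from $T_{k(W_1)}$, and then choose $A$ accordingly so that $A\cap(W_1\times T)$ is not dominant over $W_1$. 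For such $A$, the composition $A\circ(B-B')$ is supported on $\pi_1(A\cap(W_1\times T))\times X$, which lies over a proper closed subset of $W_1$, so its generic fiber vanishes.

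To verify compatibility with dominant rational maps, if $\alpha$ is the $0$-cycle on $(W_2)_{k(W_1)}$ associated to a dominant $\phi\colon W_1\dashrightarrow W_2$, I would take $A$ to be the class of the Zariski closure of the graph of $\phi$ in $W_1\times W_2$; the Gysin composition then reduces, on the open locus where $\phi$ is defined, to the usual pullback of $B$ along $(\phi,\text{id}_X)$, which induces $\phi^*$ on $CH_r(X_{k(-)};R)$. For the contravariant functoriality on birational motives, composition in this category is by definition the same pairing in the case $X=W_3$, $r=0$, so associativity of correspondence composition through smooth proper varieties delivers the functor property, with the diagonal lift $\Delta_W\in CH_{\dim W}(W\times W;R)$ of the birational identity acting as the identity.

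The main obstacle is the $B$-independence step: it is what forces a coordinated choice of the lifts $A$ and $B$ via the moving lemma for $0$-cycles on $(W_2)_{k(W_1)}$, and it is where both smoothness of $W_2$ (both to invoke moving on $(W_2)_{k(W_1)}$ and to give the regular diagonal) and properness of $W_2$ (for the proper pushforward) enter essentially.
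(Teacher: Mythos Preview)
Your construction is essentially correct and yields a valid proof, but it is genuinely different from the paper's argument. The paper does not build the pairing by hand. Instead, it invokes Rost's result that for any $R$-linear cycle module $M$ over $k$, the unramified cohomology $A^0(W;M)$ for smooth proper $W$ is already a contravariant functor on birational motives (with the required pairing $CH_0((W_2)_{k(W_1)};R)\otimes A^0(W_2;M)\to A^0(W_1;M)$ built into Rost's formalism). The remaining work is then to exhibit a cycle module $M$ with $A^0(W;M,-r)\cong CH_r(X_{k(W)};R)$; the paper takes $M(F)=A_r(X_F;K_*)$, Rost's Chow group with coefficients in Milnor $K$-theory, and checks the relevant graded piece recovers $CH_r(X_{k(W)};R)$.

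Your route is the more elementary, bare-hands one: lift to cycles on products, compose via the refined Gysin map along the diagonal of the smooth $W_2$, push forward along the proper projection, and pass to the generic fiber. The delicate point, as you correctly identify, is independence of the lift $B$, which you handle by moving the $0$-cycle $\alpha$ on the smooth $k(W_1)$-variety $(W_2)_{k(W_1)}$ off the bad locus $T_{k(W_1)}$ before choosing $A$; combined with the (easier) $A$-independence, this gives well-definedness. Associativity then reduces to the standard associativity of refined correspondence composition (Fulton, Chapter~16), valid here because the intermediate varieties $W_2,W_3$ are smooth and proper while $X$ may be arbitrary. What your approach buys is that it avoids the cycle-module machinery entirely; what the paper's approach buys is that the well-definedness and functoriality are absorbed into Rost's existing theory, so no moving lemma or explicit associativity check is needed.
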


\begin{proof}
Let $M$ be an $R$-linear cycle module over $k$. The unramified cohomology
group $A^0(W;M)$ is defined in Rost \cite[section 5]{Rost} for $k$-varieties
$W$. For $W$ smooth proper over $k$, $A^0(W;M)$ is a birational invariant
of $W$, which coincides with $M(k(W))_{\nr}$ \cite[section 12]{Rost}.
Rost observed that unramified cohomology $A^0(W;M)$ 
for smooth proper varieties $W$ over $k$ is a contravariant functor
on the category
of birational motives over $k$ \cite[Theorem RC.9]{KM}.
More precisely, there is a pairing
$$CH_0((W_2)_{k(W_1)};R)\otimes_R A^0(W_2;M)\arrow A^0(W_1;M)$$
for any variety $W_1$ and any smooth proper variety $W_2$
over $k$. 

It remains to observe that for a separated scheme $X$
of finite type over $k$ and an integer $r$, there is a cycle module $M$
over $k$
with $A^0(W;M,-r)\cong CH_r(X_{k(W)};R)$ for all $k$-varieties $W$.
(The index $-r$ refers to the grading of a cycle module, as in
\cite[section 5]{Rost}.) Namely, let $M(F)$ be the $R$-linear cycle module
$A_r(X_F;K_*)$, in the notation of \cite[section 7]{Rost}. Here $F$ runs
over fields $F/k$, and $K_*$ denotes Milnor $K$-theory tensored with $R$.
Define the grading of $M(F)$ by saying
that elements of $M(F,j)$ are represented by elements of Milnor $K_{r+j}$
of function fields of $r$-dimensional subvarieties of $X_F$. Then,
by definition,
$$A^0(X;M)=\ker(A_r(Y_{k(X)},K_*)\arrow
\oplus_{x\in X^{(1)}} A_r(Y_{k(x)};K_*)).$$
The group $A_r(Y_{k(X)};K_*,R)$ is the Chow group
$CH_r(Y_{k(X)};R)$. The boundary map
takes this graded piece of $A_r(Y_{k(X)};K_*)$ to a zero group (involving
$K_{-1}$ of function fields of $r$-dimensional subvarieties of
$Y_{k(x)}$ for codimension-1 points $x$ in $X$). So
$A^0(X;M,-r)\cong CH_r(Y_{k(X)};R)$, as we want.
\end{proof}

\begin{lemma}
\label{cardinality}
Let $X$ be a separated scheme of finite type over an algebraically closed
field $k$
of characteristic zero,
$R$ a commutative ring, and $r$ an integer. Suppose that there is
a field $E/k$ such that $CH_r(X;R)\arrow CH_r(X_E;R)$ is not
surjective.
Then $CH_r(X_F;R)$ can have arbitrarily large cardinality for
fields $F/k$. In particular, there is a field $F/k$ with
$CH_r(X_F;R)$ not finitely generated as an $R$-module.
\end{lemma}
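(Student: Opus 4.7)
The plan is to use the functoriality in Lemma~\ref{chowfunctor} to build a family of distinct classes in $CH_r(X_F;R)$ indexed by an arbitrary set $I$. First, I would reduce to the case where the failure of surjectivity occurs over a finitely generated extension of $k$: since Chow groups of schemes of finite type commute with filtered colimits of fields, some finitely generated subextension $E_0\subseteq E$ already witnesses the non-surjectivity of $CH_r(X;R)\arrow CH_r(X_{E_0};R)$. By resolution of singularities in characteristic zero, choose a smooth projective variety $Y$ over $k$ with $k(Y)=E_0$, and fix a class $\alpha\in CH_r(X_{k(Y)};R)$ not in the image of $CH_r(X;R)$. Because $k$ is algebraically closed, $Y(k)$ is nonempty; fix a $k$-point $y_0\in Y(k)$.

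For any set $I$, let $F_I:=\varinjlim_J k(Y^J)$, where $J$ ranges over finite subsets of $I$. For each $i\in I$ and each finite $J\ni i$, the $i$-th projection $\pi_i\colon Y^J\arrow Y$ is a dominant morphism of smooth projective varieties, so Lemma~\ref{chowfunctor} supplies a pullback class $\pi_i^*(\alpha)\in CH_r(X_{k(Y^J)};R)$; passing to the colimit gives a class $\alpha_i\in CH_r(X_{F_I};R)$.

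The heart of the argument is to show $\alpha_i\ne\alpha_j$ in $CH_r(X_{F_I};R)$ for $i\ne j$. Pick a finite $J\supseteq\{i,j\}$ and consider the section $\sigma_i\colon Y\arrow Y^J$ placing $y$ in coordinate $i$ and $y_0$ in the others. Although $\sigma_i$ is not dominant, it determines a $k(Y)$-point of $Y^J$, hence a class in $CH_0((Y^J)_{k(Y)};R)$, and the pairing of Lemma~\ref{chowfunctor} then furnishes $\sigma_i^*\colon CH_r(X_{k(Y^J)};R)\arrow CH_r(X_{k(Y)};R)$. By the contravariant functoriality on birational motives asserted in that lemma, the identities $\pi_i\circ\sigma_i=\text{id}_Y$ and $\pi_j\circ\sigma_i=\text{const}_{y_0}$ yield $\sigma_i^*(\alpha_i)=\alpha$ and $\sigma_i^*(\alpha_j)=y_0^*(\alpha)$, where $y_0^*\colon CH_r(X_{k(Y)};R)\arrow CH_r(X;R)$ is the specialization at $y_0$ (itself the pullback supplied by Lemma~\ref{chowfunctor} with $W_1=\Spec k$). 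Hence $\sigma_i^*(\alpha_i-\alpha_j)=\alpha-y_0^*(\alpha)$ is nonzero in $CH_r(X_{k(Y)};R)$, so $\alpha_i\ne\alpha_j$ already in $CH_r(X_{k(Y^J)};R)$; the same argument over any enlargement $J'\supseteq J$ preserves the inequality, so $\alpha_i\ne\alpha_j$ in the colimit $CH_r(X_{F_I};R)$.

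Consequently $|CH_r(X_{F_I};R)|\geq |I|$ for any set $I$; taking $|I|>\max(|R|,\aleph_0)$ produces a Chow group of strictly greater cardinality than any finitely generated $R$-module, giving the ``in particular'' clause. The main technical obstacle I expect is verifying that the pullbacks compose as one would naively want --- that is, that $\sigma_i^*\pi_i^*$ and $\sigma_i^*\pi_j^*$ really agree with $(\pi_i\circ\sigma_i)^*$ and $(\pi_j\circ\sigma_i)^*$ at the level of Chow groups --- which amounts to unwinding the composition law in the contravariant functor on birational motives built in Lemma~\ref{chowfunctor}.
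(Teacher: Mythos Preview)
Your argument is correct; it is essentially the ``more bare-hands argument'' the paper alludes to in the sentence immediately preceding Lemma~\ref{chowfunctor}. The paper's own proof packages the same ingredients more categorically: the $k$-point $y_0$ (equivalently, a $0$-cycle of degree~$1$) splits the birational motive as $M_{\bir}(Y)\cong M_{\bir}(k)\oplus T$, whence $M_{\bir}(Y^n)\cong(M_{\bir}(k)\oplus T)^{\otimes n}$ contains $M_{\bir}(k)\oplus T^{\oplus n}$ as a summand; applying the functor of Lemma~\ref{chowfunctor} then embeds $\bigl(CH_r(X_{k(Y)};R)/CH_r(X;R)\bigr)^{\oplus n}$ as a direct summand of $CH_r(X_{k(Y^n)};R)$. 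Your projections $\pi_i$ and sections $\sigma_i$ are exactly the maps realising these idempotents, so the two arguments are the same at bottom. The paper's phrasing buys a slightly stronger intermediate conclusion (a genuine direct-sum embedding of copies of the cokernel, making non-finite-generation immediate without the cardinality step), while yours avoids invoking the tensor structure on birational motives and is more self-contained. Your residual worry about whether $\sigma_i^*\pi_j^*=(\pi_j\circ\sigma_i)^*$ is answered by the last sentence of Lemma~\ref{chowfunctor}: it asserts functoriality on the category of birational motives, and for honest morphisms of smooth proper varieties the associated $0$-cycles compose compatibly with composition of morphisms (this is immediate from the definition of composition in the Kahn--Sujatha category).
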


\begin{proof}
We can assume that the field $E$ is finitely generated over $k$.
Then $E$ is the function field of some variety $W$ over $k$.
Since $k$ has characteristic zero, we can assume that $W$ is smooth
and projective over $k$. Since $k$ is algebraically closed,
$W$ is geometrically integral, and so all powers
$W^n$ are varieties over $k$. Also, since $k$ is algebraically closed,
$W$ has a 0-cycle of degree 1, which we can use to give a splitting
$M_{\bir}(W)\cong M_{\bir}(k)\oplus T$ for some birational motive $T$.
So, for any natural number $n$,
$M_{\bir}(W^n)\cong (M_{\bir}(k)\oplus T)^{\otimes n}$, which contains
$M_{\bir}(k)\oplus T^{\oplus n}$ as a summand. By Lemma
\ref{chowfunctor}, it follows that for any separated $k$-scheme $X$
of finite type, we have a canonical splitting
$$CH_r(X_{k(W^n)};R)\cong CH_r(X;R)\oplus (CH_r(X_{k(W)};R)/CH_r(X;R))^{\oplus n}\oplus (\text{something}).$$

For any set $S$, let $F$ be the direct limit of the function fields
of the varieties $W^T$ over all finite subsets $T$ of $S$. Then
$CH_r(X_F;R)$ is the direct limit of the Chow groups of the varieties
$X_{k(W^T)}$. By the previous paragraph, $CH_r(X_F;R)$ contains
a direct sum of copies of $CH_r(X_{k(W)};R)/CH_r(X;R)$ indexed by the elements
of $S$. Since we assumed that $CH_r(X_{k(W)};R)/CH_r(X;R)$ is not zero,
$CH_r(X_F;R)$ can have arbitrarily large cardinality for fields $F/k$.
\end{proof}

\section{Failure of the weak Chow K\"unneth property for finite groups}
\label{failure}

We apply Theorem \ref{equiv} to give the first counterexamples to the Chow
K\"unneth property for the classifying space of a finite group $G$
over an algebraically closed field $k$, answering a question
from \cite[section 6]{Totarochow} and \cite[Chapter 17]{Totarobook}.
Namely, if the unramified cohomology of $BG$ is nontrivial,
then the weak Chow K\"unneth property fails, meaning
that there is a finitely generated field $F$ over $k$
with $CH^*BG_k\arrow CH^*BG_F$ not surjective.
Examples where $BG$ has nontrivial
unramified $H^2$ were constructed by Saltman and Bogomolov
\cite{BogomolovBrauer}.
Correcting Bogomolov's earlier statements,
Hoshi, Kang, and Kunyavskii gave examples of groups of order $p^5$
for every odd prime $p$, and groups of order $2^6$,
with nontrivial unramified $H^2$ \cite[Theorem 1.13]{HKK}. These
results are sharp for all prime numbers $p$.
Indeed, $p$-groups of order at most
$p^4$ satisfy the weak Chow K\"unneth property
\cite[Theorem 11.1, Theorem 17.4]{Totarobook}, as do all groups
of order 32 (Theorem \ref{order32}).

Chu and Kang showed that for any $p$-group $G$ of order at most $p^4$
and exponent $e$, if $k$ is a field of characteristic not $p$
which contains the $e$th roots of unity, then
$BG$ is stably rational over $k$
\cite{CK}. (Concretely, this means that the variety
$V/G$ is stably rational over $k$
for every faithful representation
$V$ of $G$ over $k$. The stable birational equivalence
class of $V/G$ for a faithful representation $V$ of a finite group $G$
is independent of the representation
$V$, by Bogomolov and Katsylo \cite{BK}.)
For 2-groups of order at most $2^5$, $BG$ is again stably rational,
by Chu, Hu, Kang, and Prokhorov \cite{CHKP}. It is striking
that $BG$ has the weak Chow K\"unneth property for $p$-groups
of order at most $p^4$, and for groups of order 32,
although there is no obvious implication between stable rationality
of $BG$ and the weak Chow K\"unneth property for $BG$. (If $BG$
can be approximated by quotients $(V-S)/G$ which are linear schemes
in the sense of section \ref{motives},
then both properties hold;
and both properties imply the triviality of unramified cohomology.)

We show in Corollary \ref{infinite} 
that for every finite group $G$ such that $BG_k$ has nontrivial
unramified cohomology with $\F_p$ coefficients,
there is an extension field $F$ of $k$
such that $CH^i(BG_F)/p$ is infinite for some $i$. This answers
another question from \cite[Chapter 18]{Totarobook}. In particular,
the ring $CH^*(BG_F)/p$ is not noetherian, and does not consist
of transferred Euler classes of representations.

We can still ask whether the abelian group $CH^iBG_F$
is finitely generated for every finite group $G$ and every integer $i$
when $F$ is an algebraically closed field. The question 
of finiteness is also interesting
for other classes of fields, such as finitely generated
fields over $\Q$ or $\F_p$. The ``motivic Bass conjecture''
\cite[Conjecture 37]{Kahn}
would imply that the Chow groups of every variety over a
finitely generated field are finitely generated; that would imply
that each group $CH^iBG_F$ is finitely generated for every affine group
scheme $G$ over a finitely generated field $F$.

Finding that the Chow K\"unneth property fails should be just
the beginning. Let $G$ be a group of order $p^5$ such that $BG$
has nontrivial unramified cohomology.
What is the Chow ring of $BG$ over an arbitrary field
(say, containing $\overline{\Q}$)?
We know that it will depend on the field.

\begin{corollary}
\label{BG}
Let $G$ be an affine group scheme of finite type
over a field $k$. Suppose that $k$ is perfect and $k$ admits
resolution of singularities (for example, $k$ could be any field
of characteristic zero). Let $p$ be a prime number which is invertible
in $k$. Suppose
that the homomorphism $H^i(k,M)\arrow H^i_{\nr}(k(V/G),M)$
of unramified cohomology
is not an isomorphism,
for some finite $\Gal(k^s/k)$-module $M$ over $\F_p$,
some generically free representation $V$ of $G$ over $k$, and some integer
$i$.
(The stable birational equivalence
class of $V/G$ for $V$ generically free is independent of 
the representation $V$,
and so this hypothesis does not depend on the choice of $V$.)
Then the weak Chow K\"unneth property with $\F_p$
coefficients fails for $BG$ over $k$, meaning
that $CH^*(BG)/p\arrow CH^*(BG_F)/p$ is not surjective for some
finitely generated field $F$ over $k$.
\end{corollary}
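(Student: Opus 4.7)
The plan is to derive a contradiction from the hypothetical K\"unneth surjectivity for $BG$ by descending to a finite-dimensional smooth approximation of $BG$ and applying Corollary \ref{biratcor}. Concretely, I would assume for contradiction that $CH^*(BG)/p \to CH^*(BG_F)/p$ is surjective for every finitely generated field $F$ over $k$, fix the generically free representation $V$ of $G$ furnished by the hypothesis, let $S\subsetneq V$ be the $G$-invariant closed subset where $G$ acts with nontrivial stabiliser, and set $U:=(V-S)/G$, a smooth $k$-variety with $k(U)=k(V/G)$.

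The central step is to transfer the assumed K\"unneth surjectivity from $BG$ down to $U$. For a fixed codimension $j$, I would choose a representation $W$ large enough that the bad locus $S'$ of $V':=V\oplus W$ has codimension greater than $j$ in $V'$; then $CH^j((V'-S')/G)=CH^j(BG)$ by the construction of the Chow ring of a classifying space. Since $((V-S)\oplus W)/G$ is open in $(V'-S')/G$ and is a vector bundle over $U$, combining homotopy invariance with the right exactness of the localisation sequence (Lemma \ref{basicseq}) produces a surjection $CH^j(BG)\twoheadrightarrow CH^j(U)$, natural in the base field. The naturality square in $F/k$ then forces $CH^j(U)/p\to CH^j(U_F)/p$ to be surjective for every $j$ and every finitely generated $F/k$; in other words, $U$ itself satisfies the weak Chow K\"unneth property with $\F_p$ coefficients.

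Corollary \ref{biratcor} then applies, using that $k$ is perfect and admits resolution of singularities, and identifies the birational motive of $U$ with $\F_p$ coefficients with that of a point. Via the equivalence (3)$\Leftrightarrow$(4) of Theorem \ref{equiv}, I conclude that $N(k)\to N(k(U))_{\nr}$ is an isomorphism for every $\F_p$-linear cycle module $N$ over $k$. Applying this to the cycle module of Galois cohomology with coefficients in the given finite $\Gal(k^s/k)$-module $M$---which is $\F_p$-linear since $M$ is killed by $p$---and using $k(U)=k(V/G)$, I obtain $H^i(k,M)\cong H^i_{\nr}(k(V/G),M)$ for every $i$, contradicting the hypothesis. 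The chief obstacle is the middle paragraph: one must carefully unpack the Morel--Voevodsky/Totaro approximation of $BG$ to show that K\"unneth-type surjections transfer to the finite-dimensional quotient $U$ compatibly with arbitrary base change $F/k$, whereas the passage from condition (5) back to condition (4) in Theorem \ref{equiv} and the identification of $H^i_{\nr}$ with a cycle module are essentially quotations of facts already assembled.
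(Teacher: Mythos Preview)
Your proposal is correct and follows essentially the same route as the paper: assume surjectivity for $BG$, pass to a smooth finite-dimensional approximation $U=(V-S)/G$, apply Corollary~\ref{biratcor}, and then contradict the hypothesis via Theorem~\ref{equiv}(4) applied to the Galois-cohomology cycle module. The only cosmetic difference is your middle paragraph: the paper obtains the surjection $CH^*BG\twoheadrightarrow CH^*U$ in one line by quoting the equivariant localization sequence $CH^*_G V\to CH^*_G(V-S)$ from Edidin--Graham, whereas you unpack this explicitly via an auxiliary summand $W$ and homotopy invariance---the same mechanism, just written out by hand.
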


To relate the $p$-groups mentioned earlier to this statement,
note that those groups $G$ (of order $p^5$ for $p$ odd or order
$2^6$ for $p=2$) are shown to have nontrivial unramified Brauer group
$H^2_{\nr}(k(V/G),G_m)$, where $k$ is an algebraically closed field $k$
in which $p$ is invertible. This group is $p$-power torsion, by a transfer
argument.
By results of Grothendieck, $H^2_{\nr}(k(V/G),\mu_p)$ is isomorphic
to the $p$-torsion subgroup of $H^2_{\nr}(k(V/G),G_m)$ \cite[Proposition
4.2.3]{Colliot}. Therefore
$H^2_{\nr}(k(V/G),\mu_p)$
is also nonzero, and so Corollary \ref{BG}
applies to these groups $G$.

Explicitly, for any prime number $p\geq 5$,
here is an example of a group $G$ of order $p^5$ with 
unramified $H^2$ (over $\C$) not zero
\cite[proof of Theorem 2.3]{HKK}. In this presentation, we use the notation
$[g,h]=g^{-1}h^{-1}gh$.
\begin{multline*}
G=\langle f_1,f_2,f_3,f_4,f_5 | \, f_i^p=1 \text{ for all }i, \,
  f_5\text{ central},\\
[f_2,f_1]=f_3, \, [f_3,f_1]=f_4, \, [f_4,f_1]=[f_3,f_2]=f_5, \,
[f_4,f_2]=[f_4,f_3]=1\rangle
\end{multline*}

\begin{proof}
(Corollary \ref{BG}) 
By definition, $CH^iBG$ is isomorphic to $CH^i(V-S)/G$ for
any representation $V$ of $G$ over $k$ and any $G$-invariant
(Zariski) closed subset $S$ such that $G$ acts freely on $V-S$
with quotient a scheme and $S$ has codimension greater than $i$
in $V$ \cite[Theorem 1.1]{Totarochow}, \cite[Theorem 2.5]{Totarobook}.
By the basic exact sequence
for equivariant Chow groups, the homomorphism
$$CH^*BG=CH^*_GV\arrow CH^*_G(V-S)=CH^*(V-S)/G$$
is surjective \cite[Proposition 5]{EG}, \cite[Lemma 2.9]{Totarobook}.

Suppose that $CH^*(BG)/p\arrow CH^*(BG_F)/p$ is surjective for every
finitely generated field $F$ over $k$. Let $V$ be a
representation of $G$ with a closed subset $S\subsetneq V$
such that $G$ acts freely on $V-S$ with quotient a separated scheme
$U=(V-S)/G$.
By the previous paragraph,
applied to $G$ and $G_F$, it follows that $CH^*(U)/p\arrow
CH^*(U_F)/p$ is surjective for every finitely generated field
$F$ over $k$. By Corollary \ref{biratcor}, $U$ has the birational
motive of a point, with $\F_p$ coefficients.
It follows that the field $k(U)$ over $k$
has trivial unramified cohomology with coefficients in any
$\F_p$-linear cycle module over $k$.
Galois cohomology (with $p$ invertible
in $k$, as we assume) is an example of a cycle module.
Explicitly, for any finite
$\Gal(k_s/k)$-module $M$ killed by $p$, the assignment
$F\mapsto \oplus_{i} H^i(F, M\otimes \mu_p^{\otimes i})$
for finitely generated fields $F$ over $k$ is a cycle module over $k$
\cite[Remark 2.5]{Rost}. That completes the proof.
\end{proof}

The following corollary strengthens Corollary \ref{BG}.
We give
the first examples of finite groups $G$ and prime
numbers $p$ such that
the Chow group $CH^i(BG_F)/p$ is infinite, for some $i$
and some field $F$. Namely, we can take a group of order
$p^5$ for $p$ odd, or of order $2^6$, with nontrivial unramified
cohomology. 

\begin{corollary}
\label{infinite}
Let $G$ be a finite group, and let $p$ be a prime number.
Suppose that the unramified cohomology
$H^i_{\nr}(\overline{\Q}(V/G),\F_p)$
is not zero, for 
some generically free representation $V$ of $G$ over $\overline{\Q}$
and some $i>0$.
Then there is a field $F$ containing $\overline{\Q}$ and a positive
integer $r$ such that $CH^r(BG_F)/p$ is infinite. It follows that
the ring $CH^*(BG_F)/p$ is not noetherian.
\end{corollary}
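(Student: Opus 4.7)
The plan is to transfer the nontrivial unramified cohomology of $BG$ to the smooth approximation $U=(V-S)/G$, deduce via Corollary \ref{biratcor} that some Chow pullback from $U$ fails to be surjective, upgrade that failure to actual infinitude using Lemma \ref{cardinality}, and finally push the conclusion back to $BG_F$ through the basic exact sequence for equivariant Chow groups.

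Fix a generically free representation $V$ of $G$ over $\overline{\Q}$ together with a closed $G$-invariant subset $S\subsetneq V$ on whose complement $G$ acts freely, and set $U=(V-S)/G$. Then $U$ is smooth over $\overline{\Q}$ and $\overline{\Q}(U)=\overline{\Q}(V)^G=\overline{\Q}(V/G)$, so $H^i_{\nr}(\overline{\Q}(U),\F_p)\neq 0$ by hypothesis. Since $\overline{\Q}$ has characteristic zero and contains $\mu_p$, Galois cohomology with $\F_p$ coefficients is an $\F_p$-linear cycle module over $\overline{\Q}$. Pick a smooth projective compactification $X$ of $U$; the birational motives of $U$ and $X$ agree, and the nonvanishing of unramified cohomology shows that condition (4) of Theorem \ref{equiv} fails for $X$ with $\F_p$ coefficients. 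Hence condition (3) fails, and the birational motive of $U$ with $\F_p$ coefficients is \emph{not} that of a point.

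By the contrapositive of Corollary \ref{biratcor} (valid since $\overline{\Q}$ is perfect and admits resolution of singularities), there exist a finitely generated extension $E/\overline{\Q}$ and an integer $s\geq 1$ such that $CH^s(U;\F_p)\to CH^s(U_E;\F_p)$ is not surjective. Reindexing by dimension of cycles and invoking Lemma \ref{cardinality} for the separated scheme $U$ over the algebraically closed field $\overline{\Q}$ of characteristic zero, with $R=\F_p$, produces a field $F$ containing $\overline{\Q}$ for which $CH^s(U_F;\F_p)$ is not finitely generated as an $\F_p$-module, in particular infinite. The basic exact sequence for equivariant Chow groups recalled in the proof of Corollary \ref{BG}, applied over $F$, yields an unconditional surjection $CH^s(BG_F)/p\twoheadrightarrow CH^s(U_F)/p$ (no codimension hypothesis on $S$ is needed), so $CH^s(BG_F)/p$ is infinite, and the first claim holds with $r=s$.

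For the non-noetherian conclusion, note that $CH^*(BG_F)/p$ is a connected nonnegatively graded $\F_p$-algebra; if it were noetherian, its augmentation ideal would be finitely generated, the ring would be finitely generated as an $\F_p$-algebra, and each graded piece would be a finite-dimensional $\F_p$-vector space, contradicting the infinitude of $CH^r(BG_F)/p$. There is no genuine obstacle in this argument --- it assembles Corollary \ref{biratcor}, Lemma \ref{cardinality}, and the basic exact sequence --- and the only subtle point worth watching is that $CH^*(BG_F)/p\twoheadrightarrow CH^*(U_F)/p$ is surjective without any codimension restriction on $S$, so a single pair $(V,S)$ suffices throughout and we do not need to enlarge $V$ after learning what $s$ turns out to be.
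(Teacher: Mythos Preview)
Your proof is correct and follows essentially the same route as the paper's: both arguments pass to a smooth approximation $U=(V-S)/G$, invoke Corollary~\ref{biratcor} (the paper does so via Corollary~\ref{BG}), apply Lemma~\ref{cardinality} to get infinitude of $CH^s(U_F)/p$, and then transfer this back to $BG_F$. The only organizational difference is that the paper first obtains non-surjectivity at the level of $BG$ and then chooses an approximation $U$ with $\codim(S)>r$ so that $CH^r(BG_F)\cong CH^r(U_F)$, whereas you fix $U$ once and use only the surjection $CH^*(BG_F)/p\twoheadrightarrow CH^*(U_F)/p$ at the end---a slightly cleaner bookkeeping that, as you note, avoids having to enlarge $V$ after the fact.
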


\begin{proof}
Corollary \ref{BG} gives an extension field $E$ of $\overline{\Q}$
such that $CH^r(BG)/p\arrow CH^r(BG_E)/p$ is not surjective for some $r$.
So, for a finite-dimensional approximation $U=(V-T)/G$ to $BG$
with $T$ of codimension greater than $r$,
$CH^r(U)/p\arrow CH^r(U_E)/p$ is not surjective. By Lemma
\ref{cardinality}, there is a field $F/\overline{\Q}$ with $CH^r(U_F)/p$
infinite. Equivalently, $CH^r(BG_F)/p$ is infinite. Since
$CH^*(BG_F)/p$ is a graded $\F_p$-algebra, it follows that the ring
$CH^*(BG_F)/p$ is not noetherian.
\end{proof}

\section{The weak Chow K\"unneth property for smooth proper $k$-schemes}
\label{smoothpropersect}

In this section, we characterize the smooth proper $k$-schemes
whose Chow groups remain unchanged under arbitrary field extensions:
they are the schemes whose Chow motive is a Tate motive.
This type of result
for smooth proper $k$-schemes
has a long history, including results
by Bloch \cite[Proposition 3.12]{Kleiman},
\cite[Appendix to Lecture 1]{Blochbook}, Bloch-Srinivas \cite{BS},
Jannsen \cite[Theorem 3.5]{Jannsen},
and Kimura \cite{Kimura}. Shinder gave a convenient version
of Bloch's argument \cite{Shinder}. 
One difference from most earlier results is that we consider
Chow groups with coefficients in any commutative ring, not just
the rational numbers.

In the rest of the paper, Theorem \ref{smoothproper}
is used only to prove Corollary \ref{smoothTate}.
Nonetheless, the proof, using the diagonal cycle, helped
to suggest the proof of Theorem \ref{main} about arbitrary schemes.
Theorem \ref{equiv} is a ``birational analog'' of Theorem \ref{smoothproper};
in particular, the equivalent properties in Theorem
\ref{smoothproper} are not birationally invariant.

\begin{theorem}
\label{smoothproper}
Let $M$ be a Chow motive over a field $k$ with coefficients
in a commutative ring $R$. (For example, $M$ could be the motive $M(X)$
for a smooth proper $k$-scheme $X$.) Suppose that $M$ has
the weak Chow K\"unneth property, meaning that the morphism
$CH_*(M)\arrow CH_*(M_F)$ is a surjection of $R$-modules
for every finitely generated
field $F/k$. Then $M$ is a summand of a finite direct sum
of Tate motives $R(j)[2j]$ for integers $j$.

Conversely, suppose that a Chow motive $M$ is a summand of a finite direct
sum of Tate motives. Then
$CH_*(M)\arrow CH_*(M_F)$ is an isomorphism for every field $F/k$,
and $M$ has the Chow K\"unneth property
that $CH_*(M)\otimes_R CH_*(Y;R)\arrow
CH_*(M\otimes M^c(Y))$
is an isomorphism of $R$-modules for every separated $k$-scheme $Y$ of finite
type. Also, $CH_*(M)$ is a finitely generated projective $R$-module,
and $CH_*(M)\cong H_*(M_{\C},R)$ if there is an embedding
$k\inj \C$. Finally, $M$ has the K\"unneth property
for motivic homology in the sense that
$$CH_*(M)\otimes_R H^M_*(Y,R(*))\cong H^M_*(M\otimes M^c(Y),R(*))$$
for every separated $k$-scheme $Y$ of finite type.
\end{theorem}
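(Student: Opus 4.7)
My plan is to treat the two directions separately. The converse is essentially formal: I would reduce each claim to a single Tate motive $R(j)[2j]$ and then extend by additivity (for direct sums) and by splittings (for summands). For $R(j)[2j]$ one has $CH_i(R(j)[2j];R) = R$ concentrated at $i = j$, manifestly base-change invariant and finitely generated projective over $R$. The identity $R(j)[2j] \otimes M^c(Y) \cong M^c(Y)(j)[2j]$ reduces the Chow and motivic K\"unneth statements for $R(j)[2j]$ to shifts of Chow and motivic homology of $Y$, and the Betti comparison uses that realization sends $R(j)[2j]$ to $R$ concentrated in degree $2j$. Each of these properties is preserved under direct sums and under passage to summands, so all the stated assertions transfer to any $M$ that is a summand of a finite sum of Tate motives.

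For the forward direction I would iterate the Bloch--Srinivas decomposition-of-the-diagonal argument from the proof of Theorem \ref{equiv}, extending it from degree zero to every codimension. Write $M = (X,e,m)$ with $X$ smooth projective of pure dimension $n$, $e \in CH_n(X \times X; R)$ an idempotent, and reduce to $m = 0$. Pulling $e$ back to the generic fiber along the first projection yields a zero-cycle $e_\eta \in CH_0(M_{k(X)};R)$, and weak Chow K\"unneth produces $\alpha_0 \in CH_0(M;R) \subset CH_0(X;R)$ mapping to $e_\eta$. The basic exact sequence (Lemma \ref{basicseq}) for $X \times X$ then gives
$$e \;=\; X \times \alpha_0 \;+\; B_0$$
in $CH_n(X \times X;R)$, with $B_0$ supported on $D \times X$ for some proper closed subset $D \subsetneq X$. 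The correspondence $X \times \alpha_0$ factors through $R(0)$, and, combined with the idempotence of $e$, splits off a Tate summand of $M$.

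To extract the remaining Tate summands one iterates on the complement. The residual projector factors through the motive of a resolution of singularities $\widetilde D$ of $D$ (in characteristic zero; in general, through a smooth alteration, with care about torsion coefficients), suitably Tate-twisted. Because Tate motives satisfy the full Chow K\"unneth property, their summands inherit it, so the complement of the first extracted Tate summand again satisfies weak Chow K\"unneth. Induction on $\dim X$ then terminates the procedure, exhibiting $M$ as a summand of a finite direct sum of Tate motives.

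The main obstacle is the bookkeeping in the iteration. One must check: first, that the extracted correspondence $X \times \alpha_0$ really defines a direct summand of $(X,e)$, which requires controlling the degree of $\alpha_0$ (generalizing the $\deg\alpha_0 = 1$ calculation from Theorem \ref{equiv} to the setting of an arbitrary motive); second, that weak Chow K\"unneth is preserved on the complement at each stage, which holds because the extracted Tate summand has the full Chow K\"unneth property; and third, that the support dimension strictly decreases. Performing the decomposition simultaneously in every codimension --- by pulling back the residual projector to generic fibers of higher-dimensional subvarieties under both projections and applying weak Chow K\"unneth for $CH_i$ with $i > 0$ --- yields the correct Tate twists, namely a summand $R(j)[2j]$ at each codimension $j$.
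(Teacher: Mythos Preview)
Your treatment of the converse is correct and matches the paper's.

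For the forward direction, your Bloch--Srinivas iteration is a genuinely different route from the paper, and it is both harder and less general. The paper avoids all geometric induction by using the rigidity of the category of Chow motives. First, a short induction on $\dim Y$ via the localization sequence upgrades weak Chow K\"unneth to the surjectivity of
\[
CH_*(M)\otimes_R CH_*(N)\longrightarrow CH_*(M\otimes N)
\]
for \emph{every} Chow motive $N$. Now take $N=M^*$. The identity $1_M\in\Hom(M,M)\cong CH_0(M^*\otimes M)$ is then a finite sum $\sum_i \alpha_i\otimes\beta_i$ with $\alpha_i\in CH_{-b_i}(M^*)$ and $\beta_i\in CH_{b_i}(M)$. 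Reading these as morphisms $\alpha\colon M\to \bigoplus_i R(b_i)[2b_i]$ and $\beta\colon \bigoplus_i R(b_i)[2b_i]\to M$ with $\beta\alpha=1_M$, one concludes at once that $M$ is a summand of $\bigoplus_i R(b_i)[2b_i]$. No resolution, no alterations, no induction on the dimension of a support, no splitting off one Tate piece at a time.

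Your approach, by contrast, has a real difficulty at the step ``the residual projector factors through the motive of a resolution $\widetilde D$.'' A cycle supported on $D\times X$ need not lift to $CH_*(\widetilde D\times X;R)$ in general, so producing an honest factorization of the residual correspondence through a smooth lower-dimensional motive is not automatic. With $\Q$-coefficients one can push this through (this is essentially the classical Jannsen--Kimura argument), and in characteristic zero with arbitrary $R$ one can probably make it work with care. But in positive characteristic you propose alterations, and these introduce covers of degree not invertible in $R$; for torsion coefficients this breaks the factorization, and I do not see how your sketch recovers from that. The vague last paragraph (``performing the decomposition simultaneously in every codimension'') does not address this. So your proposal is the right classical picture with rational coefficients, but for the theorem as stated---arbitrary $k$, arbitrary $R$---the duality trick is the essential missing idea.
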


The notation $M^c(Y)$ is suggested by Voevodsky's triangulated
category of motives (discussed in section \ref{motives}),
but below we say explicitly what this means.

If $R$ is a PID, then the conditions in the theorem are also equivalent
to $M$ being a finite direct sum of Tate motives (without having
to take a direct summand). For an arbitrary commutative ring $R$,
it is essential to allow direct summands.

The conclusion cannot be strengthened to say that $X$ is a linear scheme
or a rational variety. There are Barlow surfaces
over $\C$ whose Chow motive with $\Z$ coefficients
is a direct sum of Tate motives,
for example by Theorem \ref{smoothproper} and \cite[Proposition 1.9]{ACP}.
It follows that these smooth projective surfaces
have the Chow K\"unneth property,
although they are of general type and hence not rational.

Before proving Theorem \ref{smoothproper}, let us define
the category of Chow motives over $k$
with coefficients in $R$. To agree with the conventions
in Voevodsky's triangulated category of motives $DM(k;R)$ (section
\ref{motives}), we think of the basic functor $X\mapsto M(X)$ from smooth
proper $k$-schemes to Chow motives as being covariant, and we write
the motive of $\P^1_k$ as $R\oplus R(1)[2]$. Covariance is only a minor
difference from the conventions in Scholl's paper \cite{Scholl},
because the
category of Chow motives is self-dual. (The ``shift'' $[2]$ is written
in order to agree with the notation in $DM(k;R)$; it has no meaning
by itself in the category of Chow motives.) We will
only consider $DM(k;R)$ when
the exponential characteristic of $k$ is invertible in $R$;
in that case, the category of Chow motives is equivalent to a full
subcategory of $DM(k;R)$.

For smooth proper varieties $X$ and $Y$ over $k$, define
the $R$-module of correspondences of degree $r$ from $X$ to $Y$
as
$$\Corr_r(X,Y)=CH_{\dim(X)+r}(X\times_k Y;R).$$
We extend this definition to all smooth proper $k$-schemes by taking
direct sums. For smooth proper $k$-schemes $X,Y,Z$, there is a composition
of correspondences
$$\Corr_r(X,Y)\otimes_R \Corr_s(Y,Z)\arrow \Corr_{r+s}(X,Z),$$
written as $f\otimes g\mapsto gf$,
given by pulling back 
the two cycles from $X\times Y$ and $Y\times Z$
to $X\times Y\times Z$, multiplying, and pushing forward
to $X\times Z$.

A {\it Chow motive }over $k$ with coefficients in $R$, written
$(M(X)(a)[2a],p)$, consists of a smooth
proper $k$-scheme $X$, an integer $a$, and an idempotent
$p=p^2$ in $\Corr_0(X,X)$.
The morphisms of Chow motives are given by
$$\Hom((M(X)(a)[2a],p),(M(Y)(b)[2b],q))=q\Corr_{a-b}(X,Y)p
\subset \Corr_{a-b}(X,Y)$$
Composition of correspondences makes the Chow motives over $k$
into a category.
We write $M(X)$ for the motive $(M(X)(0)[0],\Delta)$, where
$\Delta$ is the diagonal in $X\times_k X$. Thus $X\mapsto M(X)$
is a covariant functor from smooth proper $k$-schemes to Chow
motives. The {\it Tate motive }$R(a)[2a]$ is $M(\Spec(k))(a)[2a]$.
Define the {\it Chow groups }of a motive $M$ by
$CH_a(M)=\Hom(R(a)[2a],M)$; then the group $CH_a(M(X))$
is isomorphic to the usual Chow group $CH_a(X;R)$ of a smooth
proper $k$-scheme $X$.

The category of Chow motives is symmetric monoidal,
with tensor product $\otimes$
such that $M(X)\otimes M(Y)\cong M(X\times_k Y)$ for smooth
proper $k$-schemes $X$ and $Y$. There is an involution $M\mapsto M^*$
on Chow motives, defined on objects by
$$(M(X)(a)[2a],p)^*=(M(X)(-n-a)[-2n-2a],p^t)$$
for $X$ of pure dimension $n$. It is immediate that the natural
morphism $M\arrow M^{**}$ is an isomorphism, and that
$$\Hom(M\otimes N,P)\cong \Hom(M,N^*\otimes P)$$
for all Chow motives $M,N,P$ \cite[section 1.1.5]{Scholl}. That is,
the category of Chow motives is a rigid additive tensor category,
with internal Hom given by $\HHom(M,N)=M^*\otimes N$.
For a field extension $F/k$, there is an obvious functor from Chow motives
over $k$ to Chow motives over $F$, taking $M(X)$ to $M(X_F)$
for smooth proper $k$-schemes $k$.

As an extension of the previous notation, for any Chow motive $M
=(M(X)(a)[2a],p)$
over $k$ and any $k$-scheme $Y$ of finite type, we define
the Chow groups $CH_*(M\otimes M^c(Y))$ as the summand
of the Chow groups $CH_*(X\times_k Y;R)$ given by $p$. (At this point,
$M^c(Y)$ has no meaning by itself. In section \ref{motives},
$M^c(Y)$ will be used to denote
the compactly supported motive of $Y$
in the triangulated category of motives $DM(k;R)$.)

\begin{proof}
(Theorem \ref{smoothproper})
Let $M$ be a Chow motive which has
the weak Chow K\"unneth property, meaning that
$CH_*M
\arrow CH_*(M_F)$ is surjective for all finitely generated fields $F$
over $k$. Then the $R$-linear map
$CH_*M\otimes_RCH_*Y\arrow CH_*(M\otimes M^c(Y))$
is surjective for every $k$-scheme $Y$ of finite type. (In this proof,
we write $CH_*(Y)$ to mean $CH_*(Y;R)=CH_*(Y)\otimes_{\Z}R$.)
To prove this,
do induction on the dimension of $Y$, using the commutative
diagram of exact sequences for any closed subscheme $S$ of $Y$:
$$\xymatrix@C-10pt@R-10pt{
CH_*M\otimes_RCH_*S  \ar[r]\ar[d] & CH_*M\otimes_RCH_*Y \ar[r]\ar[d]
& CH_*M\otimes_RCH_*(Y-S) \ar[r]\ar[d] & 0\\
CH_*(M\otimes M^c(S)) \ar[r] & CH_*(M\otimes M^c(Y)) \ar[r]
& CH_*(M\otimes M^c(Y-S)) \ar[r] & 0
}$$
Here we use that, for a $k$-variety $Y$, $CH_*(M_{k(Y)})=
\dlim CH_*(M\otimes M^c(Y-S))$, where the direct limit runs over all
closed subsets $S\subsetneq Y$. It follows
that $CH_*M\otimes_R CH_*N\arrow CH_*(M\otimes N)$ is surjective
for all Chow motives $N$.

For any Chow motives $N$ and $P$,
we have $\Hom(N,P)=\Hom(R\otimes N,P)=\Hom(R,\HHom(N,P))$.
By Lemma \ref{inthom}, the identity map on the Chow motive $M$ corresponds to 
an element $1_M\in \Hom(R,M^*\otimes M)=CH_0(M^*\otimes M)$.
(When $M$ is the motive of a smooth
proper variety $X$, $1_M$ is the class of the diagonal on $X\times X$.)

For the given motive $M$,
we showed that $CH_*M\otimes_R CH_*N\arrow CH_*(M\otimes N)$ is surjective
for all Chow motives $N$, and we apply this to $N=M^*$. So we can write
$1_M=\sum_{i=1}^r \alpha_i\otimes \beta_i$ in $CH_0(M^*\otimes M)$
for some $\alpha_1,\ldots,\alpha_r\in CH_*(M^*)$
and $\beta_1,\ldots,\beta_r\in CH_*M$. Here $\alpha_i$ is in $CH_{-b_i}(M^*)$
and $\beta_i$ is in $CH_{b_i}M$ for some integers $b_1,\ldots,b_r$.
Let $N=\oplus_{i=1}^r R(b_i)[2b_i]$. Then $(\beta_1,\ldots,\beta_r)$
can be viewed as a morphism $\beta\colon N\arrow M$, and $(\alpha_1,\ldots,
\alpha_r)$ can be viewed as a morphism $N^* \arrow M^*$,
or equivalently $\alpha\colon M\arrow N$.
The equation $1_M=\sum \alpha_i\otimes
\beta_i$ in $CH_0(M^*\otimes M)$ means that the composition 
$M\arrow N\arrow M$ is the identity.
Since idempotents split in the category of Chow motives,
it follows that $M$ is a direct summand of $N$, which is a finite
direct sum of Tate motives. One direction of the theorem is proved.

The converse statements in the theorem are clear
for a finite direct sum of Tate motives. That implies
the converse statements for any summand of a finite direct sum
of Tate motives.
\end{proof}

\section{The triangulated category of motives}
\label{motives}

This section summarizes the properties of Voevodsky's triangulated
category of motives over a field $k$, $DM(k;R)$.
Every separated scheme of finite type over $k$ (not necessarily
smooth and proper) determines an object in this category, and
Chow groups are given by morphisms from a fixed object
(a Tate motive) in this category. So $DM(k;R)$
is a natural setting for
studying Chow groups of $k$-schemes that need not be smooth and proper.

We use the triangulated category of motives for at least two purposes
in this paper. First, we need it even to state
the characterization of those schemes of finite type 
which satisfy the K\"unneth property for motivic homology groups
(Theorem \ref{main}). The corresponding characterization
for smooth proper schemes (Theorem \ref{smoothproper})
used only the more elementary
category of Chow motives. Second, we need the triangulated
category of motives in order to define the motive $M^c(BG)$
of a classifying space and to study when that motive
is mixed Tate (sections \ref{quotientsect} and \ref{TateBGsect}).

Let $k$ be a field. Thanks to recent developments in the theory
of motives, $k$ need not be assumed to be perfect or to admit
resolution of singularities. We put one restriction
on the coefficient ring $R$, as follows.
The {\it exponential characteristic }of $k$ means 1 if $k$ has characteristic
zero, or $p$ if $k$ has characteristic $p>0$. For the rest of this section,
we assume 
that the exponential characteristic of $k$ is invertible in $R$.
This assumption is used to prove the basic properties of
the compactly supported motive of a scheme $X$ over $k$, $M^c(X)$,
such as the localization triangle. (This assumption can be avoided
when we know resolution of singularities over $k$.)
This assumption on $R$ should
be understood throughout the paper when we discuss motives $M^c(X)$.

A readable introduction to Voevodsky's triangulated categories
of motives over $k$ is \cite{Voevodskytri}. Let $R$
be a commutative ring. We primarily use
the ``big'' triangulated category $DM(k;R)$ of motives with
coefficients in $R$,
which contains the direct sum of an arbitrary set
of objects. Also, the motive $R(1)$ is invertible in $DM(k;R)$,
as discussed below.
(Voevodsky originally considered the subcategory
$DM^{\text{eff}}_{-}(k)$ of ``bounded above effective motives'',
which does not have arbitrary direct sums.)
Following Cisinski and D\'eglise,
$DM(k;R)$ is defined to be the homotopy category of $G_m^{\tr}$-spectra
of (unbounded) chain complexes of Nisnevich sheaves with transfers
which are $A^1$-local
\cite[section 2.3]{RO}, \cite[Example 6.25]{CDlocal}.
For $k$ perfect, R\"ondigs and \O stv\ae r showed that
the category $DM(k;\Z)$ is equivalent to the homotopy
category of modules over the motivic Eilenberg-MacLane spectrum $H\Z$
in Morel-Voevodsky's stable homotopy category $SH(k)$ \cite[Theorem 1]{RO}.
This is an analog
of the equivalence between the derived category $D(\Z)$ of abelian groups
and the homotopy category of modules over the Eilenberg-MacLane
spectrum $H\Z$ in the category of spectra in topology
\cite[Theorem 8.9]{EKMM}.

Let $k^{\perf}$ denote the perfect closure of $k$. That is,
$k^{\perf}$ is equal to $k$ if $k$ has characteristic zero,
and $k^{\perf}$ consists of all $p^r$th roots of elements of $k$
for all $r\geq 0$ if $k$ has characteristic $p>0$. Under
our assumption that $p$ is invertible in $R$, 
Cisinski and D\'eglise proved the following convenient
result, following a suggestion
by Suslin \cite[Proposition 8.1]{CDint}.

\begin{theorem}
\label{perfect}
The pullback functor $DM(k;R)\arrow DM(k^{\perf};R)$
is an equivalence of categories.
\end{theorem}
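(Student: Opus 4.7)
The plan is to reduce Theorem \ref{perfect} to the case of a finite purely inseparable extension and then invoke a trace argument.

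First, I would express $k^{\perf}$ as the filtered colimit $\varinjlim_\alpha k_\alpha$ of its finite subextensions $k\subseteq k_\alpha \subseteq k^{\perf}$, each purely inseparable of degree a power of $p$. A standard continuity property of $DM(-;R)$ says that for an inverse system of base schemes with affine transition maps, the pullback functors exhibit $DM$ of the limit as the colimit of the $DM$ of the members. Hence it is enough to show that for each finite purely inseparable extension $f\colon \Spec(k')\arrow \Spec(k)$ of degree $p^n$, the pullback $f^*\colon DM(k;R)\arrow DM(k';R)$ is an equivalence.

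Second, such an $f$ is finite, flat, proper, and radicial. The six-functor formalism on $DM$ (after Cisinski--D\'eglise) equips $f^*$ with a right adjoint $f_* = f_!$ and produces a trace $\tr \colon f_* f^* M \arrow M$ whose composition with the unit $M \arrow f_* f^* M$ is multiplication by $\deg(f) = p^n$. Since $p\in R^\times$ by hypothesis, this composition is an isomorphism of endofunctors of $DM(k;R)$, so $f^*$ is fully faithful. A symmetric argument on $DM(k';R)$ exhibits every object as a retract of $f^* f_* N$ for a suitable $N$, yielding essential surjectivity. Combined with the first step, this proves the theorem.

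The principal obstacle is the construction of the trace $\tr \colon f_* f^* \arrow \text{id}$ for a purely inseparable $f$: unlike the finite \'etale case, where such a trace is formal, here one must exploit that $f$ is a universal homeomorphism. This is exactly the point at which the hypothesis $p \in R^\times$ is needed: the content of Cisinski--D\'eglise's result (following Suslin's suggestion) is precisely that in $DM(-;R)$ universal homeomorphisms become invertible once the exponential characteristic is inverted in the coefficients. Geometrically, one presents $f$ locally in terms of iterated $p$-th power (Frobenius) maps and exploits $A^1$-invariance of motivic cohomology to see that such Frobenius twists become isomorphisms with invertible $p$. Granting this key calculation, the two reductions above complete the proof.
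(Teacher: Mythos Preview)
The paper does not give its own proof of Theorem \ref{perfect}. It states the result and attributes it to Cisinski and D\'eglise \cite[Proposition 8.1]{CDint}, noting that the argument follows a suggestion of Suslin. So there is no in-paper proof to compare against; your proposal is really a sketch of the external argument.

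As a sketch of that external argument, your outline is in the right spirit: one reduces by continuity to finite purely inseparable extensions, and then the essential content is that a finite radicial surjection becomes invertible in $DM(-;R)$ once $p\in R^\times$. Two small cautions. First, your trace step as written only shows that the unit $\text{id}\arrow f_*f^*$ is split injective; to get an equivalence you also need the counit $f^*f_*\arrow \text{id}$ to be an isomorphism (equivalently, that $f^*$ is essentially surjective), and the ``symmetric argument'' you allude to should be made precise. Second, the actual construction in \cite{CDint} (after Suslin) does not proceed via an abstract six-functor trace but rather works directly at the level of finite correspondences: one shows that for a universal homeomorphism $f$, the functor $f^*$ on correspondences with $\Z[1/p]$-coefficients is an equivalence, using the relative Frobenius to produce an explicit inverse correspondence. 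This is exactly the ``key calculation'' you flag as the principal obstacle, so you have correctly located where the work lies, but you should be aware that the published proof resolves it by a concrete construction rather than by general six-functor formalism.
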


By Theorem \ref{perfect}, most results on motives which
previously assumed that $k$ is perfect immediately
generalize to an arbitrary field $k$,
given our assumption that the exponential characteristic
of $k$ is invertible in $R$. We will
mention some examples in what follows.

By definition of a triangulated category (such as $DM(k;R)$),
every morphism $X\arrow Y$ fits into a distinguished triangle
$X\arrow Y\arrow Z\arrow X[1]$. Here $Z$ is called a {\it cone }of
the morphism $X\arrow Y$. It is unique up to isomorphism, but
not (in general) up to unique isomorphism.

There are two natural functors from
schemes to motives, which we write as $M(X)$ and $M^c(X)$.
These were defined by Voevodsky when $k$ is a perfect field
which admits resolution of singularities (as we know for $k$
of characteristic zero)
\cite[section 2.2]{Voevodskytri}. Kelly extended these
constructions to any perfect field $k$, under our assumption
that the exponential characteristic of $k$ is invertible in $R$,
using Gabber's work on alterations
\cite[Lemmas 5.5.2 and 5.5.6]{Kelly}. Finally, these constructions
now apply to any field $k$: given a separated scheme
$X$ of finite type over $k$, we have objects $M(X_{k^{\perf}})$
and $M^c(X_{k^{\perf}})$
of $DM(k^{\perf};R)$ by Kelly, hence objects $M(X)$ and $M^c(X)$ 
of $DM(k;R)$ by Theorem \ref{perfect}.

In more detail, there is a covariant
functor $X\mapsto M(X)$ from the category of separated schemes of finite
type over $k$ to $DM(k;R)$. Also, there is a covariant functor
$X\mapsto M^c(X)$ (the motive of $X$ ``with compact support'')
from the category of separated schemes of finite type
and proper morphisms to $DM(k;R)$. A flat morphism $X\arrow Y$
determines a pullback map $M^c(Y)\arrow M^c(X)$.
The motives $M(X)$ and $M^c(X)$
are isomorphic for $X$ proper over $k$.

The category $DM(k;R)$ has objects called
$R(j)$ for all integers $j$. The motives $R(j)[2j]$ are called
the {\it Tate motives}.
One interpretation of Tate motives is that $M^c(A^j_k)=R(j)[2j]$
for $j\geq 0$. More generally, for an affine bundle $Y\arrow X$
(a morphism that is locally on $X$ isomorphic to a product with
affine space $A^r$),
we have the {\it homotopy invariance }statements
that $M(Y)\cong M(X)$, whereas $M^c(Y)\cong M^c(X)(r)[2r]$.

The category $DM(k;R)$ is a tensor triangulated category, with
a symmetric monoidal product $\otimes$
\cite[Example 6.25]{CDlocal}. We have
$M(X)\otimes M(Y)=M(X\times_k Y)$ and $M^c(X)\otimes M^c(Y)=M^c(X\times_k Y)$
for $k$-schemes $X$ and $Y$
\cite[Proposition 4.1.7]{Voevodskytri}, \cite[Proposition 5.5.8]{Kelly}.
The motive $R=R(0)$
of a point is the identity object for the tensor product.
The motive $R(1)$ is invertible
in the sense that $R(a)\otimes R(b)\cong R(a+b)$ for all integers
$a$ and $b$.

The category $DM(k;R)$ has internal
$\Hom$ objects, with natural isomorphisms
$$\Hom(A\otimes B,C)\cong \Hom(A,\HHom(B,C))$$
for all motives $A,B,C$. Moreover, the internal Hom preserves
distinguished triangles in each variable, up to a sign
change in the boundary map \cite[Definition 6.6.1,
Theorem 7.1.11]{Hovey}.
(All this is part of Cisinski-D\'eglise's result that
$S\mapsto DM(S;R)$ is a ``premotivic category'' for finite-dimensional
noetherian schemes $S$ \cite[section 11.1.2]{CDtri}.) It follows
that, for any motive $B$ in $DM(k;R)$,
the functor $\cdot\otimes B$ is a left adjoint, and therefore
preserves arbitrary direct sums.

To understand the two functors, note that
the Chow groups $CH_iX$ are determined by $M^c(X)$, whereas
Chow cohomology groups $CH^iX$ for $X$ smooth over $k$ are determined
by $M(X)$. Namely,
$$CH_i(X)\otimes_{\Z}R=\Hom(R(i)[2i],M^c(X))$$
for any separated scheme $X$ of finite type over $k$, while
$$CH^i(X)\otimes_{\Z}R=\Hom(M(X),R(i)[2i])$$
for $X$ also smooth over $k$ \cite[section 2.2]{Voevodskytri}.
Voevodsky defined {\it motivic cohomology }and (Borel-Moore)
{\it motivic homology }for
any separated scheme $X$ of finite type over $k$
by 
$$H^j_M(X,R(i))=\Hom(M(X),R(i)[j])$$
and
$$H_j^M(X,R(i))=\Hom(R(i)[j],M^c(X)).$$

For a separated scheme $X$ of finite type over $k$ and a closed subscheme $Z$
of $X$, there is a distinguished triangle in $DM(k;R)$,
the {\it localization triangle}:
$$M^c(Z)\arrow M^c(X)\arrow M^c(X-Z)\arrow M^c(Z)[1].$$
(This was proved by Voevodsky when $k$ is perfect
and admits resolution of singularities \cite[section 2.2]{Voevodskytri},
by Kelly for any perfect field $k$ with our assumption on $R$
\cite[Proposition 5.5]{Kelly}, and
by Theorem \ref{perfect} for an arbitrary field $k$.)
This triangle induces a long exact sequence of motivic homology groups,
called the localization sequence.

Bloch defined higher Chow groups as the homology of an explicit
complex of algebraic cycles. Higher Chow groups are essentially
the same as motivic homology, but (by tradition) they are
numbered by codimension.
Namely, for an
equidimensional separated scheme $X$ of dimension $n$ over $k$,
$$CH^{n-j}(X,i-2j;R)\cong H_i^M(X,R(j)).$$
(For $k$ admitting
resolution of singularities and $X$ quasi-projective over $k$, this is
\cite[Proposition 4.2.9]{Voevodskytri}. Kelly modified the argument
to replace the assumption on resolution of singularities with
our assumption on $R$
\cite[Theorem 5.6.4]{Kelly}. Finally, the assumption of quasi-projectivity
was needed for Bloch's proof of the localization sequence for higher
Chow groups \cite{Bloch94},
but Levine has now proved the localization sequence
for the higher Chow groups of all schemes of finite type over a field
\cite[Theorem 0.7]{Levine}.)

Some higher Chow groups are zero by the definition, because they
consist of cycles of negative dimension or negative codimension.
It follows
that the motivic homology $H_i^M(X,R(j))$ of a separated $k$-scheme $X$
is zero unless
$i\geq 2j$ and $i\geq j$ and $j\leq \dim(X)$.

For any motive $A$ in $DM(k;R)$,
we define the {\it motivic homology }groups of $A$ to mean
the groups $H_j^M(A,R(i))=\Hom(R(i)[j],A).$
Note that what we call the motivic homology groups of a separated
$k$-scheme $X$
of finite type are the motivic homology groups of the motive $M^c(X)$,
not those of $M(X)$ (although the two motives are isomorphic
for $X$ proper over $k$).

Let ${\cal T}$ be a triangulated
category with arbitrary direct sums.
A {\it localizing subcategory }of
${\cal T}$ means a strictly full triangulated subcategory which is closed
under arbitrary direct sums. 
Following R\"ondigs and \O stv\ae r, the triangulated category
$DMT(k;R)$ of {\it mixed Tate motives }with coefficients in $R$
is the smallest localizing subcategory of $DM(k;R)$
that contains $R(j)$ for all integers $j$ \cite{RO}.
Because the tensor product
$\otimes$ on $DM(k;R)$ is compatible with distinguished triangles
and with arbitrary direct sums, the tensor product of two mixed Tate motives
is a mixed Tate motive.

The category of mixed Tate motives is analogous
to the category of {\it cellular }spectra in the stable
homotopy category $SH(k)$ studied
by Voevodsky \cite{Voevodskyopen}
and Dugger-Isaksen \cite{DI}. (Actually, Voevodsky says ``$T$-cellular''
and Dugger and Isaksen say ``stably cellular''.) Namely,
let $T$ be the suspension spectrum of the pointed
$k$-space $(\P^1_k,\text{point})$; the triangulated category
of cellular spectra is defined as the smallest localizing
subcategory of $SH(k)$ that contains $T^j$ for all integers
$j$.

As with motives, there
are two natural functors from separated $k$-schemes $X$ of finite type
to $SH(k)$: the usual functor (which we write as $X\mapsto S(X)$
or $X\mapsto \Sigma^{\infty}_T X^{+}$)
and a compactly supported version, $X\mapsto S^c(X)$.
Explicitly, for any compactification $\overline{X}$ of a $k$-scheme
$X$, $S^c(X)$ is the spectrum associated to the pointed
$k$-space $\overline{X}/(\overline{X}-X)$. There is a functor from
$SH(k)$ to $DM(k;R)$, which one can view as smashing with the Eilenberg-MacLane
spectrum $HR$, and this takes $S(X)$ to $M(X)$ and $S^c(X)$
to $M^c(X)$. In particular, the spectrum $T$ goes to the motive $R(1)[2]$.

In a triangulated category with arbitrary direct sums, every idempotent
splits \cite[Proposition 3.2]{BN}.
Applying this to the category of mixed Tate motives,
it follows that every summand of a mixed Tate motive in $DM(k;R)$
is a mixed Tate motive.

Let ${\cal T}$ be a triangulated
category with arbitrary direct sums. An object $X$
of ${\cal T}$ is called {\it compact }if $\Hom(X,\cdot)$ commutes with
arbitrary direct sums. The objects $M(X)(a)[b]$ and $M^c(X)(a)[b]$
are compact in $DM(k;R)$ for every separated $k$-scheme $X$ of finite type
\cite[Lemmas 5.5.2 and 5.5.6]{Kelly}.
A set ${\cal P}$ of objects {\it generates }${\cal T}$ if every object
$Y$ of ${\cal T}$ such that $\Hom(P[a],Y)=0$ for all objects $P$ in ${\cal P}$
and all integers $a$ is zero. A triangulated category ${\cal T}$
is {\it compactly generated }if it has arbitrary direct sums and it
is generated by a set of compact objects.

The following result by Neeman helps to understand the notion
of generators for a triangulated category
\cite[Theorem 2.1]{Neeman}. 

\begin{lemma}
\label{gens}
Let ${\cal T}$ be a triangulated category with arbitrary direct sums,
and let ${\cal P}$ be a set of compact objects. The following
are equivalent:

(1) The smallest localizing subcategory of ${\cal T}$
that contains ${\cal P}$ is equal to
${\cal T}$.

(2) The set ${\cal P}$ generates ${\cal T}$. That is, any object $X$
in ${\cal T}$ with $\Hom(P[a],X)=0$ for all $P$ in ${\cal P}$
and $a\in\Z$ must be zero.
\end{lemma}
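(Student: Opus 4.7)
My plan is to prove the two implications separately. The direction (1) $\Rightarrow$ (2) is formal. Given $X$ with $\Hom(P[a],X)=0$ for all $P\in{\cal P}$ and $a\in\Z$, let ${\cal S}$ be the full subcategory of ${\cal T}$ consisting of objects $Y$ with $\Hom(Y[a],X)=0$ for all integers $a$. The long exact sequence of $\Hom(-,X)$ on a distinguished triangle shows that ${\cal S}$ is triangulated, and the natural isomorphism $\Hom(\bigoplus_i Y_i[a],X)\cong\prod_i\Hom(Y_i[a],X)$ shows it is closed under arbitrary direct sums. Hence ${\cal S}$ is localizing and contains ${\cal P}$, so by (1) it coincides with ${\cal T}$; taking $Y=X$ forces $X=0$.

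For (2) $\Rightarrow$ (1), let ${\cal L}$ denote the smallest localizing subcategory of ${\cal T}$ containing ${\cal P}$. For an arbitrary $X\in{\cal T}$, the plan is to construct a morphism $L\to X$ with $L\in{\cal L}$ whose cone $C$ satisfies $\Hom(P[a],C)=0$ for all $P\in{\cal P}$ and $a\in\Z$. By hypothesis (2) this forces $C=0$, hence $L\cong X$ and $X\in{\cal L}$, so ${\cal L}={\cal T}$.

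To build $L$, I use a countable inductive procedure. Set $L_0=0$ with the zero map to $X$. Given $L_n\to X$ with cone $C_n$, let $\bigoplus_\alpha P_\alpha[a_\alpha]\to C_n$ be the sum indexed by all morphisms $f_\alpha$ with $P_\alpha\in{\cal P}$ and $a_\alpha\in\Z$; composing with the connecting map $C_n\to L_n[1]$ and shifting by $-1$ produces a morphism $g_n\colon\bigoplus_\alpha P_\alpha[a_\alpha-1]\to L_n$, and I let $L_{n+1}$ be its cone. Because $g_n$ factors through $C_n[-1]\to L_n$ and the composition $C_n[-1]\to L_n\to X$ is zero (two consecutive maps in the rotation of the triangle defining $C_n$), the morphism $L_n\to X$ extends along $L_n\to L_{n+1}$ to a morphism $L_{n+1}\to X$. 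The octahedral axiom then identifies the cone $C_{n+1}$ of $L_{n+1}\to X$ via a distinguished triangle
$$\bigoplus_\alpha P_\alpha[a_\alpha]\to C_n\to C_{n+1}$$
whose first map is the original sum, so each $f_\alpha$ becomes zero in $C_{n+1}$. Each $L_n$ lies in ${\cal L}$, and I set $L=\hocolim_n L_n$, which is also in ${\cal L}$ because it fits in the standard triangle $\bigoplus_n L_n\to\bigoplus_n L_n\to L$ between objects of ${\cal L}$.

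The main obstacle, and the only step where compactness of the objects in ${\cal P}$ is actually used, is to verify that the cone $C$ of $L\to X$ is right-orthogonal to ${\cal P}$. Applying the cone functor termwise to the homotopy-colimit presentation of $L$ yields an analogous triangle $\bigoplus_n C_n\to\bigoplus_n C_n\to C$. Because each $P\in{\cal P}$ is compact, $\Hom(P[a],-)$ commutes with arbitrary direct sums, so the long exact sequence associated to this triangle identifies $\Hom(P[a],C)$ with $\dlim_n\Hom(P[a],C_n)$. By construction, every morphism $P[a]\to C_n$ dies under $C_n\to C_{n+1}$, so this colimit vanishes. Hence $\Hom(P[a],C)=0$ for all $P\in{\cal P}$ and $a\in\Z$, completing the argument.
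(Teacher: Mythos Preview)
The paper does not give its own proof of this lemma; it simply cites it as \cite[Theorem 2.1]{Neeman}. Your argument is exactly Neeman's proof and is correct.

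One small point worth tightening: you first choose an extension $L_{n+1}\to X$ and then invoke the octahedral axiom to obtain a triangle $\bigoplus_\alpha P_\alpha[a_\alpha]\to C_n\to C_{n+1}$, asserting that its first map is the original sum $\sum f_\alpha$. Applied to $L_n\to L_{n+1}\to X$, the axiom only guarantees that this map agrees with $\sum f_\alpha$ after composing with $C_n\to L_n[1]$, so a priori it could differ by something factoring through $X\to C_n$. The clean fix is to apply the octahedral axiom instead to the factorization $\bigoplus_\alpha P_\alpha[a_\alpha-1]\to C_n[-1]\to L_n$ you already recorded: this simultaneously produces the extension $L_{n+1}\to X$ and identifies its cone with $\cone(\sum f_\alpha)$, so each $f_\alpha$ visibly dies in $C_{n+1}$. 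This is a refinement of presentation, not a gap in the argument.
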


\begin{corollary}
\label{mixed}
A mixed Tate motive with zero motivic homology must be zero.
\end{corollary}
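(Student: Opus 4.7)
The plan is to apply Lemma \ref{gens} directly to the triangulated category $DMT(k;R)$, using the set $\mathcal{P}=\{R(j):j\in\Z\}$. For this I need to verify three things: that $DMT(k;R)$ is a triangulated category with arbitrary direct sums (so that Lemma \ref{gens} even applies), that each $R(j)$ is a compact object of $DMT(k;R)$, and that the smallest localizing subcategory of $DMT(k;R)$ containing $\mathcal{P}$ is all of $DMT(k;R)$.

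The first and third points are essentially by construction: by definition, $DMT(k;R)$ is a localizing subcategory of $DM(k;R)$, so it is triangulated and closed under arbitrary direct sums, and the third point is literally the definition of $DMT(k;R)$ as the smallest such subcategory containing the $R(j)$. The second point follows from the fact cited from \cite[Lemma 5.5.2]{Kelly} that $M(X)(a)[b]$ is compact in $DM(k;R)$ for any separated $X$ of finite type over $k$; taking $X=\Spec(k)$ shows that $R(j)=M(\Spec k)(j)$ is compact in $DM(k;R)$, and a compact object of a triangulated category remains compact in any full subcategory closed under arbitrary direct sums (since direct sums in $DMT(k;R)$ agree with those in $DM(k;R)$).

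Having set this up, Lemma \ref{gens} tells us that condition (1) (which holds by definition of $DMT(k;R)$) is equivalent to condition (2): any object $A$ in $DMT(k;R)$ with $\Hom(R(i)[j],A)=0$ for all $i,j\in\Z$ must be zero. But by the definition of motivic homology given earlier,
\[
H_j^M(A,R(i))=\Hom(R(i)[j],A),
\]
so the hypothesis that $A$ has zero motivic homology is exactly the hypothesis of (2). Hence $A=0$, which is the claim.

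There is no real obstacle here; the corollary is a formal consequence of Lemma \ref{gens}, and the only thing to check carefully is that compactness of $R(j)$ transfers from $DM(k;R)$ down to the localizing subcategory $DMT(k;R)$, which is immediate because the inclusion preserves arbitrary direct sums.
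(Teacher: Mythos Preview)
Your proof is correct and takes essentially the same approach as the paper: apply Lemma \ref{gens} to $DMT(k;R)$ with $\mathcal{P}=\{R(j):j\in\Z\}$, then use the definition $H_j^M(A,R(i))=\Hom(R(i)[j],A)$. The paper's proof is a two-sentence version of yours; you simply spell out the verification of the hypotheses (compactness of $R(j)$ in $DMT(k;R)$, etc.) that the paper leaves implicit.
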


\begin{proof}
The objects $R(a)$ for integers $a$ generate the category
$DMT(k;R)$, by Lemma \ref{gens}. Since $H_j^M(A,R(i))=\Hom(R(i)[j],A)$
for a motive $A$, the corollary is proved.
\end{proof}

\begin{lemma}
\label{smoothgens}
Let $k$ be a field. Then
the category $DM(k;R)$ is compactly generated, with a set
of generators given by
the compact objects $M(X)(a)$ for $X$ smooth projective over $k$
and $a$ an integer.
\end{lemma}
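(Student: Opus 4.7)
The plan is to verify the two conditions of Lemma \ref{gens}: compactness of the proposed generators, and that they generate $DM(k;R)$ as a localizing subcategory. By Theorem \ref{perfect} we reduce at once to the case $k$ perfect, which is assumed from now on. Compactness of $M(X)(a)$ for $X$ smooth projective over $k$ and $a\in\Z$ has already been quoted above from Kelly \cite[Lemma 5.5.2]{Kelly}. It remains to prove generation, which I would do in two stages.

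Stage 1: show that the a priori larger set $\{M(X)(a): X\text{ smooth of finite type over }k,\, a\in\Z\}$ already generates $DM(k;R)$. This is essentially built into the construction. Since $DM(k;R)$ is the homotopy category of $G_m^{\tr}$-spectra of $A^1$-local chain complexes of Nisnevich sheaves with transfers, the representable sheaves attached to smooth $k$-schemes serve as cofibrant generators at each stage. Concretely, if $A\in DM(k;R)$ satisfies $\Hom(M(X)(a)[b],A)=0$ for all smooth $X$ of finite type over $k$ and all $a,b\in\Z$, then the underlying Nisnevich hypercohomology of $A$ with transfers vanishes on every smooth $k$-scheme, forcing $A=0$.

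Stage 2: show that every smooth $X$ of finite type over $k$ has $M(X)$ in the localizing subcategory generated by $M(Y)(a)$ for $Y$ smooth projective. A Nisnevich Mayer--Vietoris triangle reduces to $X$ smooth quasi-projective. For such $X$, I would invoke Gabber's refined alteration theorem---available because the exponential characteristic of $k$ is invertible in $R$---to produce a compactification $\overline{X}$ and a projective alteration $\widetilde{X}\arrow \overline{X}$ of degree invertible in $R$, with $\widetilde{X}$ smooth projective and the boundary $D=D_1\cup\cdots\cup D_r$ a strict normal crossings divisor. Iterating the localization triangle over the strata $D_I=\bigcap_{i\in I}D_i$ (each smooth projective) places $M(\widetilde{X}-D)$ in the localizing subcategory generated by Tate twists of $M(\widetilde{X})$ and the $M(D_I)$. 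The invertible degree of the alteration then splits $M(X)$ off as a direct summand of $M(\widetilde{X}-D)$ via a trace/idempotent argument. The whole program is carried out by Kelly \cite[\S5.5]{Kelly}.

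The main obstacle is Stage 2. In characteristic zero one can instead use actual resolution of singularities, and the resulting argument, essentially due to Voevodsky, is clean and classical. In positive characteristic one must substitute alterations for resolution, and the crucial splitting of $M(X)$ off the alteration relies on the degree being a unit in $R$. This is exactly the place where the paper's standing hypothesis that the exponential characteristic of $k$ be invertible in $R$ is used; without it the lemma as stated would fail in general.
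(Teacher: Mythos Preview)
Your proposal is correct and matches the paper's own proof, which simply reduces to the perfect case via Theorem \ref{perfect} and then cites Voevodsky \cite[Corollary 3.5.5]{Voevodskytri} (characteristic zero) and Kelly \cite[Proposition 5.5.3]{Kelly} (general perfect $k$); you have merely unpacked what those references do. One terminological quibble: in Stage~2 the distinguished triangle you are using for $M(\widetilde{X}-D)$ is the Gysin triangle for smooth closed pairs, not the localization triangle (which in this paper is reserved for $M^c$); the argument is unaffected since all strata are smooth and only Tate twists intervene.
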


\begin{proof}
This was proved by Voevodsky when $k$ is perfect and admits
resolution of singularities \cite[Corollary 3.5.5]{Voevodskytri}.
Given our assumption that the exponential characteristic of $k$
is invertible in $R$, Kelly generalized this result
to any perfect field $k$
\cite[Proposition 5.5.3]{Kelly}. The generalization
to an arbitrary field $k$ follows from Theorem \ref{perfect}.
\end{proof}

A reassuring fact is that if the motive $M^c(X)$ in $DM(k;R)$
of a separated $k$-scheme $X$ of finite type
is mixed Tate, then it is a summand of an object of
the smallest strictly full triangulated subcategory
of $DM(k;R)$ that contains $R(j)$ for all integers $j$. In other words,
$M^c(X)$ can be described by a {\it finite }diagram of objects $R(j)$.
This follows from a general result about triangulated categories.
Define a {\it thick }subcategory of a triangulated category
to be a strictly full triangulated subcategory
that is closed under direct summands.
Let ${\cal T}$ be a compactly generated triangulated category,
and let ${\cal P}$ be a set of compact generators. (We have in mind
the category of mixed Tate motives, generated by the objects $R(j)$
for integers $j$.) Then Neeman showed that any compact object in ${\cal T}$
belongs to the smallest thick
subcategory of ${\cal T}$ that contains ${\cal P}$
\cite[Theorem 2.1]{Neeman}.

The category $DM_{\gm}(k;R)$ of geometric motives is defined as
the smallest thick subcategory of $DM(k;R)$ that
contains $M(X)(a)$ for all smooth
separated schemes $X$ of finite type over $k$ and all integers $a$.
In fact,
it suffices to use $M(X)(a)$ for smooth projective varieties
$X$ over $k$ and all integers $a$, by Lemma \ref{smoothgens}.
Another application of Neeman's theorem gives that $DM_{\gm}(k;R)$ is
the subcategory of all compact objects in $DM(k;R)$.

A {\it linear scheme }over a field $k$ is defined inductively: affine space
$A^n_k$ is a linear scheme for any $n\geq 0$; for any scheme $X$
of finite type over $k$ with a closed subscheme $Z$, if $Z$ and $X-Z$
are linear schemes, then $X$ is a linear scheme; and if $X$ and $Z$
are linear schemes, then $X-Z$ is a linear scheme.
(A slightly narrower class of linear schemes was studied
in \cite{Totarolinear}.)
Some examples of linear schemes
are all toric varieties, not necessarily smooth or compact, the discriminant
hypersurface and its complement, and many quotients of affine space by finite
group actions. Linear schemes can have torsion in their Chow groups
and homology groups,
and they can have nonzero rational homology in odd degrees.
(To talk about rational homology, assume that the base field
is the complex numbers.)

From the localization triangle,
a straightforward induction shows that for any linear scheme $X$ over $k$,
the compactly supported motive $M^c(X)$ with any coefficient ring $R$
is a mixed Tate motive.
Likewise, for any linear scheme $X$, the spectrum $S^c(X)$ is cellular
in $SH(k)$.
(Dugger and Isaksen asked whether the spectrum $S(X)$ is cellular
for linear schemes $X$, and proved this in some examples
\cite[section 1.1]{DI}.
Arguably, the more natural spectrum associated to a linear scheme
$X$ is $S^c(X)$, which is clearly cellular. For $X$ proper over $k$,
$S(X)$ and $S^c(X)$ are isomorphic.)

Let $X$ and $Y$ be smooth proper varieties over $k$. Then
the set of morphisms from $M(X)$ to $M(Y)$ in $DM(k;R)$
is the Chow group $CH_{\dim(X)}(X\times_k Y;R)$
\cite[section 2.2]{Voevodskytri}. Composition of morphisms
$M(X)\arrow M(Y)\arrow M(Z)$
is given by the composition of correspondences.
As a result, the smallest strictly full subcategory of $DM(k;R)$
that is closed under direct summands and contains $M(X)(a)[2a]$
for all smooth proper schemes
$X$ over $k$ and all integers $a$ is equivalent
to the category of Chow motives over $k$ with coefficients in $R$,
as defined in section \ref{smoothpropersect}.

We define $N^*=\HHom(N,R)$. A version of Poincar\'e duality says that
$M^c(X)\cong M(X)^*(n)[2n]$ for $X$ smooth
of pure dimension $n$ over $k$ \cite[Theorem 5.5.14]{Kelly}.
The internal Hom of motives has a simple description
for compact objects, as follows.

\begin{lemma}
\label{inthom}
Let $M$ be an object of $DM_{\gm}(k;R)$, 
for example the motive $M^c(X)(a)[b]$ for a scheme $X$ of finite type over $k$
and $a,b\in \Z$. Let $N$ be any object of $DM(k;R)$.
Then
the morphism $M^*\otimes N\arrow \HHom(M,N)$ is an isomorphism.

Also, for $M$ in $DM_{\gm}(k;R)$, the natural map $M\arrow M^{**}$
is an isomorphism.
\end{lemma}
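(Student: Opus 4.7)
The plan is to prove both assertions by a thick-subcategory argument, reducing to the case of Tate twists and to motives of smooth projective varieties, where strong dualizability is a consequence of Poincar\'e duality.

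Fix $N\in DM(k;R)$ and consider the natural comparison map $\Phi_M\colon M^*\otimes N\arrow \HHom(M,N)$. Both sides are contravariant triangulated functors of $M$: on the left because $(-)^*$ sends distinguished triangles to distinguished triangles (up to sign and direction reversal) and $(-)\otimes N$ is triangulated; on the right by the quoted fact that $\HHom$ is triangulated in each variable. By the five-lemma for triangulated categories, the full subcategory ${\cal S}\subset DM(k;R)$ of those $M$ for which $\Phi_M$ is an isomorphism for every $N$ is therefore thick, i.e.\ closed under shifts, cones, and direct summands. The same formal argument shows that the class of $M$ for which $M\arrow M^{**}$ is an isomorphism is thick.

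Next I would check the generators. If $M=R(a)[b]$, Tate invertibility gives $M^*=R(-a)[-b]$ and both isomorphisms are immediate. If $M=M(X)$ for $X$ smooth projective of pure dimension $n$, Poincar\'e duality (cited just above the lemma) identifies $M(X)^*\cong M(X)(-n)[-2n]$. The class of the diagonal $\Delta\in CH_n(X\times_k X;R)$, viewed through this duality both as a coevaluation $R\arrow M(X)\otimes M(X)(-n)[-2n]$ and as an evaluation $M(X)(-n)[-2n]\otimes M(X)\arrow R$, satisfies the triangle identities because the self-composition of the diagonal correspondence on $X$ is again the diagonal, i.e.\ the identity on $M(X)$. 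This exhibits $M(X)$ as strongly dualizable with dual $M(X)(-n)[-2n]$, which formally yields both that $\Phi_{M(X)}$ is an isomorphism and that $M(X)\arrow M(X)^{**}$ is an isomorphism. Twisting by the invertible objects $R(a)[b]$ propagates the two conclusions to all $M(X)(a)[b]$.

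Finally, Lemma \ref{smoothgens} together with Neeman's theorem recalled before Lemma \ref{gens} shows that every compact object of $DM(k;R)$, that is, every object of $DM_{\gm}(k;R)$, lies in the smallest thick subcategory containing $M(X)(a)$ for $X$ smooth projective and $a\in\Z$. Combined with the thickness of ${\cal S}$ and of the reflexive class established in the second paragraph, this proves both assertions of the lemma. The main point requiring care is the verification of the triangle identities exhibiting $M(X)$ as strongly dualizable; everything else is a formal consequence of the structure of $DM(k;R)$ as a compactly generated closed symmetric monoidal triangulated category.
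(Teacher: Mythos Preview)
Your argument is correct, and it takes a somewhat different route from the paper's. The paper quotes Voevodsky--Kelly for the statement that $M\to M^{**}$ and $M^*\otimes N\to\HHom_{\gm}(M,N)$ are isomorphisms when both $M$ and $N$ are compact, and then spends the rest of the proof extending from compact $N$ to arbitrary $N$ by a generator argument: one checks that the map $\Hom(A,B^*\otimes C)\to\Hom(A\otimes B,C)$ is a bijection first for all $C$ (using that the compact generators $M(Y)(a)$ are compact, so Hom commutes with direct sums in $C$), then for all $A$. A secondary purpose of this step is to reconcile the internal Hom $\HHom_{\gm}$ in $DM_{\gm}(k;R)$ with the internal Hom $\HHom$ in the big category $DM(k;R)$.

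Your approach bypasses the extension step entirely by establishing strong dualizability of $M(X)$ for smooth projective $X$ directly from the diagonal, and then invoking the formal fact that strongly dualizable objects form a thick subcategory. Once $M$ is strongly dualizable, $M^*\otimes(-)\cong\HHom(M,-)$ holds for \emph{all} $N$ by abstract nonsense, so no separate argument in the second variable is needed. The trade-off is that you must verify the triangle identities by hand (a Chow-group computation on $X\times X\times X$), whereas the paper absorbs this into the Voevodsky--Kelly citation. Note also that your citation of Poincar\'e duality for the identification $M(X)^*\cong M(X)(-n)[-2n]$ is not strictly necessary: the evaluation and coevaluation via the diagonal already exhibit $M(X)(-n)[-2n]$ as a strong dual, which \emph{a posteriori} forces it to be isomorphic to $M(X)^*$.
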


\begin{proof}
At first, let $M^*$ denote the object $\HHom_{\gm}(M,R)$ in the subcategory
$DM_{\gm}(k;R)$ of compact objects.
Then Voevodsky and Kelly prove that $M\arrow M^{**}$ is an isomorphism for 
$M$ compact, and also that $M^*\otimes N\arrow \HHom_{\gm}(M,N)$
is an isomorphism for $M$ and $N$ compact
\cite[Theorem 4.3.7]{Voevodskytri}, \cite[Theorem 5.5.14]{Kelly}.
That is, the map
$$\Hom(A,B^*\otimes C)\arrow \Hom(A\otimes B,C)$$
associated to $B^*\otimes B\arrow R$ is a bijection
for all compact objects $A,B,C$.

For $A$ and $B$ compact, the map of $\Hom$ sets above turns arbitrary direct
sums of motives $C$ into direct sums, and fits into long
exact sequences for any distinguished triangle of objects $C$.
By Lemmas \ref{gens} and \ref{smoothgens},
it follows that the map is an isomorphism for $A$ and $B$ compact and $C$
arbitrary. For $B$ compact and $C$ arbitrary, both $\Hom$ sets turn
arbitrary direct sums of motives $A$ into products, and they fit
into long exact sequences for any distinguished triangle of objects $A$.
Therefore the map is an isomorphism for $B$ compact and $A$ and $C$ any
motives. That is, the internal Hom in $DM(k;R)$ has
$\HHom(B,C)\cong B^*\otimes C$ for $B$ compact and $C$ arbitrary.
In particular, taking $C=R$, we see that the object $B^*$
(which we defined as $\HHom_{\gm}(B,R)$ in $DM_{\gm}(k;R)$)
is isomorphic to $\HHom(B,R)$ in $DM(k;R)$.
\end{proof}

\section{A K\"unneth spectral sequence for motivic homology}

Dugger and Isaksen proved
the following K\"unneth spectral sequence, which describes
the motivic homology of the tensor product of a mixed
Tate motive with any motive \cite[Proposition 7.10]{DI}.
Their result applies to modules over any ring spectrum
in the stable homotopy category over a field $k$; the case of the
Eilenberg-MacLane spectrum $HR$ in $SH(k)$ gives the result here,
by the identification between the homotopy category of $HR$-module spectra
and $DM(k;R)$ \cite[Theorem 1]{RO}. (It is also straightforward
to translate Dugger and Isaksen's
proof to work directly in $DM(k;R)$.)
In the case
of the product of a linear scheme with any scheme over a field,
this spectral sequence was constructed by Joshua
\cite{Joshua}. 

\begin{theorem}
\label{ss}
Let $k$ be a field.
Let $R$ be a commutative ring.
Let $X$ be a mixed Tate motive in $DM(k;R)$ and $Y$ any motive in $DM(k;R)$.
For each integer $j$, there is a convergent spectral sequence
$$E_2^{pq}=\Tor^{H_*(k,R(*))}_{-p,-q,j}(H_*(X,R(*)),H_*(Y,R(*)))\imp
H_{-p-q}(X\otimes Y,R(j)).$$
This spectral sequence
is concentrated in the left half-plane (columns $\leq 0$).
\end{theorem}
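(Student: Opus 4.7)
The plan is to adapt, within $DM(k;R)$, the standard construction of a Künneth spectral sequence for modules over a ring spectrum: build a cellular resolution of $X$ by Tate motives, tensor with $Y$, and read off the spectral sequence of the resulting filtered object. Throughout, let $S := H_*(k,R(*))$ denote the bigraded motivic cohomology ring of the base; for any motive $Z$, the bigraded group $H_*(Z,R(*))$ is an $S$-module via the tensor product on $DM(k;R)$.

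First I would construct a cellular tower for $X$. Since $X$ is a mixed Tate motive, Lemma \ref{gens} says $X$ lies in the localizing subcategory of $DM(k;R)$ generated by the compact objects $R(j)$ for $j\in\Z$. The aim is a sequence
$$0=X_{-1}\arrow X_0\arrow X_1\arrow X_2\arrow\cdots$$
in $DM(k;R)$, equipped with a compatible map to $X$ realizing $X\simeq \hocolim_n X_n$, such that each cofiber $C_n:=X_n/X_{n-1}$ is a direct sum of Tate motives $R(j)[i]$ whose motivic homology furnishes the $n$-th term of a chosen free resolution $P_\bullet\arrow H_*(X,R(*))$ of $S$-modules. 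At each stage, one picks a surjection from a free bigraded $S$-module onto the motivic homology of the previous fiber and realizes it by a map out of a corresponding direct sum of $R(j)[i]$; Lemma \ref{inthom}, which identifies $\Hom_{DM}(R(j)[i],N)$ with motivic homology of $N$, guarantees that this lift exists and that the induced differentials respect the $S$-module structure. Choosing generators so that the fibers become increasingly highly connected gives both $\hocolim_n X_n\simeq X$ and the convergence needed below.

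Next I would tensor the tower with $Y$. Because $\cdot\otimes Y$ is a left adjoint (as noted in section \ref{motives}), it commutes with direct sums and preserves distinguished triangles, so we obtain a tower with $\hocolim_n (X_n\otimes Y)\simeq X\otimes Y$ and cofibers $C_n\otimes Y=\bigoplus Y(j)[i]$. Applying motivic homology to this filtered object produces an exact couple and hence a spectral sequence converging to $H_*(X\otimes Y,R(*))$. The $E_1$-page is, by construction, a direct sum of shifts and twists of $H_*(Y,R(*))$ indexed by the basis of $P_\bullet$; the $d_1$-differential is induced by the attaching maps and, by the compatibility from Lemma \ref{inthom}, agrees with the differential of the complex $P_\bullet\otimes_S H_*(Y,R(*))$. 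Therefore
$$E_2^{pq}=\Tor^{S}_{-p,-q,j}\bigl(H_*(X,R(*)),\,H_*(Y,R(*))\bigr)\imp H_{-p-q}(X\otimes Y,R(j)),$$
and the spectral sequence is concentrated in columns $p\leq 0$ since $\Tor$ vanishes in negative homological degree.

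The main technical obstacle is the first step: arranging the cellular tower so that its cofibers realize a genuine free resolution of $H_*(X,R(*))$ as an $S$-module (not merely as an $R$-module) and so that the tower has enough connectivity to yield strong convergence. This is the motivic analogue of the standard cell-module construction carried out by Dugger--Isaksen for ring spectra in $SH(k)$; one may either translate their proof verbatim into $DM(k;R)$ using Lemma \ref{inthom} and the compactness of the Tate motives provided by Lemmas \ref{gens} and \ref{smoothgens}, or quote it directly via the R\"ondigs--\O stv\ae r equivalence between $DM(k;\Z)$ and the homotopy category of $H\Z$-module spectra.
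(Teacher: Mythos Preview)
Your proposal is correct and matches the paper's treatment: the paper does not give an independent proof of Theorem \ref{ss} but simply cites Dugger--Isaksen \cite[Proposition 7.10]{DI}, noting that their spectral sequence for modules over a ring spectrum in $SH(k)$ transfers to $DM(k;R)$ either via the R\"ondigs--\O stv\ae r equivalence with $HR$-modules or by translating the argument directly. Your sketch is precisely that translation, and your final paragraph explicitly records both routes the paper mentions.
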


By the discussion after Theorem \ref{main}, one can define
a spectral sequence with the $E_2$ term above for any motives
$X$ and $Y$ in $DM(k;R)$. It does not always converge
to the motivic homology of $X\otimes Y$.

We use cohomological numbering, which means
that the differential $d_r$ has bidegree $(r,1-r)$ for all $r$.

For bigraded modules $M$ and $N$ over a bigraded ring $S$,
$\Tor^S_{a,i,j}(M,N)$ denotes the $(i,j)$th bigraded piece
of $\Tor^S_a(M,N)$. For this purpose, the group
$H_i^M(X,R(j))$ has bigrading $(i,j)$.

Here $H_i(k,R(j))\cong H^{-i}(k,R(-j))$, and so the ring $H_*(k,R(*))$
is better known as the motivic cohomology ring of $k$ with coefficients
in $R$. For example, $H_{-1}(k,\Z(-1))$ is isomorphic to $k^*$.
More generally, $\oplus_{j\geq 0}H_{-j}(k,\Z(-j))$ is the Milnor
$K$-theory ring, that is, the quotient of the tensor algebra generated
by the abelian group $k^*$ by the relation
$\{a,1-a\}=0$ for each $a\in k-\{0,1\}$
\cite{NS, TotaroMilnor}.

If $X$ and $Y$ are $k$-schemes, viewed as the motives $M^c(X)$
and $M^c(Y)$,
then the spectral sequence with $R(j)$ coefficients
is concentrated in columns $\leq 0$ and rows $\leq -2j$.
If we write $H_*(X)$ for the bigraded group $H_*(X,R(*))$, the $E_2$
term looks like:
$$\xymatrix@C-10pt@R-10pt{
0 & 0 & 0 & 0\\
[\Tor_2^{H_*k}(H_*X,H_*Y)]_{2j,j} \ar[rrd] & [\Tor_1^{H_*k}
(H_*X,H_*Y)]_{2j,j} & [H_*X\otimes_{H_*k}H_*Y]_{2j,j} & 0\\
[\Tor_2^{H_*k}(H_*X,H_*Y)]_{2j+1,j} & [\Tor_1^{H_*k}
(H_*X,H_*Y)]_{2j+1,j} & [H_*X\otimes_{H_*k}H_*Y]_{2j+1,j} & 0
}$$
(Indeed,
for a $k$-scheme $X$,
the group $H_a(X,R(b))$ is zero unless $a\geq 2b$, as mentioned
in section \ref{motives}. Since this applies to $X$, $Y$, and 
$\Spec(k)$, the $E_2$ term for the spectral sequence
with $R(j)$ coefficients is concentrated in rows $\leq -2j$.)
So there are no differentials into or out of the upper right
group, $E_2^{0,-2j}$. We deduce
that 
$$CH_*(X\times_k Y;R)\cong
CH_*(X;R)\otimes_RCH_*(Y;R)$$
if $X$ is a $k$-scheme with $M^c(X)$ a mixed Tate motive in $DM(k;R)$
and $Y$ is any separated $k$-scheme of finite type. I proved
this in the special case where
$X$ is a linear scheme over $k$ \cite{Totarolinear}, which helped
to inspire Joshua's result.

\section{The motivic K\"unneth property}
\label{kunnethsect}

In this section, we prove that a separated scheme $X$ 
of finite type over a field $k$
satisfies the motivic K\"unneth property
if and only if the motive $M^c(X)$ is a mixed Tate motive.
Given the machinery we have developed, the proof is short.

The motivic K\"unneth property means that the spectral sequence
described in Theorem \ref{ss} converges to the motivic
homology of $X\times_k Y$ for every separated $k$-scheme $Y$
of finite type. (We recall that motivic homology groups are also
called higher Chow groups.)
There is a neater formulation of the K\"unneth
property in the language of Bousfield localization, to be explained now.

The inclusion of mixed Tate motives $DMT(k;R)$ into the category
$DM(k;R)$ of all motives has a right adjoint $DM(k;R)
\arrow DMT(k;R)$,
which we write as $X\mapsto C(X)$.
It associates to any motive a mixed Tate motive with the same motivic
homology groups. For $X$ a compact object (a geometric motive),
$C(X)$ need not be a compact object. So this construction
shows the convenience of ``big'' categories
of motives. The construction is a general application of Bousfield
localization, as developed by Neeman for triangulated categories.

Namely, let ${\cal T}$ be a triangulated
category with arbitrary direct sums. 
Let ${\cal P}$ be a set of compact
objects in ${\cal T}$. Recall from section \ref{motives} that a localizing
subcategory of ${\cal T}$ means a full triangulated subcategory
which is closed under arbitrary direct sums.
Let ${\cal S}$ be the smallest localizing category
that contains ${\cal P}$. Then the inclusion ${\cal S}\arrow {\cal T}$
has a right adjoint $C\colon {\cal T}\arrow {\cal S}$ known as
{\it colocalization }with respect to ${\cal P}$
\cite[Theorem 4.1]{Neeman}. By adjointness,
there is a canonical morphism $C(X)\arrow X$, and this morphism induces
a bijection $\Hom(P[j],C(X))\arrow \Hom(P[j],X)$ for all objects
$P$ in ${\cal P}$ and all integers $j$. (The {\it localization }of an
object $X$ with respect to ${\cal P}$ means a cone $X/C(X)$, which in this
case is defined up to a unique isomorphism.)

The functor $DM(k;R)\arrow DMT(k;R)$, $X\mapsto C(X)$,
mentioned above
is the colocalization with respect to the compact objects $R(j)$
for $j\in\Z$. The construction implies that $C(X)$ is a mixed Tate motive
with a morphism $C(X)\arrow X$ that induces isomorphisms on
motivic homology groups. (That is, $\Hom(R(a)[b],
\linebreak[0]
C(X))\arrow
\Hom(R(a)[b],
\linebreak[0]
X)$ is an isomorphism for all integers $a$ and $b$.)
Moreover, $C(X)$ is determined up to a unique isomorphism by this
property.

As in any triangulated category with arbitrary
direct sums, the {\it homotopy colimit }$X_{\infty}=\hocolim (X_0\arrow X_1
\arrow \cdots)$ is defined as a cone of the morphism
$$1-s\colon \oplus_{i\geq 0}X_i\arrow \oplus_{i\geq 0}X_i,$$
where $s$ is the given map from each $X_i$ to $X_{i+1}$
\cite{BN}.

Here is an explicit construction of the colocalization $C(X)$,
modeled on Dugger and Isaksen's analogous construction in the
stable homotopy category over $k$ \cite[Proposition 7.3]{DI}.
(They were imitating the usual construction
of a {\it cellular approximation }to any topological space.)
Choose a set of generators for all the motivic homology groups
$H_b(X,R(a))$ with $a,b\in \Z$.
Let $C_0$ be a direct sum of one motive $R(a)[b]$ for each
generator; so we have a morphism $C_0\arrow X$ that induces
a surjection on motivic homology groups. Next, choose a set
of generators for the kernel of $H_*(C_0,R(*))\arrow 
H_*(X,R(*))$, let $S_1$ be the corresponding direct sum
of motives $R(a)[b]$, and let $C_1$ be a cone of the morphism
$S_1\arrow C_0$. Then we have a morphism $C_0\arrow C_1$, and we 
can choose an extension of the morphism $C_0\arrow X$ to $C_1\arrow X$.
Repeating the process, we get a sequence of mixed Tate motives
$$C_0\arrow C_1\arrow\cdots $$
with a compatible sequence of morphisms $C_i\arrow X$. 
These extend to a morphism from the homotopy colimit,
$\hocolim_j\, C_j\arrow X$. This homotopy colimit
is a mixed Tate motive, and the morphism induces an isomorphism
on motivic homology groups. So the colocalization $C(X)$ is isomorphic
to $\hocolim_j \, C_j$.

By Corollary \ref{mixed}, any mixed Tate motive with zero
motivic homology groups is zero. This is not true for motives
in general. In fact, for any motive $X$, the cone of $C(X)\arrow X$ has
motivic homology groups equal to zero, and it is zero if and only if 
$X$ is a mixed Tate motive.

\begin{lemma}
\label{colocsum}
The colocalization functor $X\mapsto C(X)$ from $DM(k;R)$
to $DMT(k;R)$ preserves arbitrary direct sums and arbitrary
products. 
\end{lemma}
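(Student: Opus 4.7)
The plan is to handle the two claims separately, with products being nearly formal and direct sums requiring an argument via the compact generators of $DMT(k;R)$.

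For products, I would observe that the colocalization functor $C\colon DM(k;R)\arrow DMT(k;R)$ is a right adjoint (to the inclusion of $DMT(k;R)$ into $DM(k;R)$), so it preserves any limits that exist in the target. Arbitrary products exist in $DM(k;R)$ (for instance, because $DM(k;R)$ is compactly generated by Lemma \ref{smoothgens}, so Brown representability applies). This immediately gives that $C$ preserves products, and in particular shows that products in $DMT(k;R)$ of mixed Tate motives $C(X_i)$ agree with the colocalization of the product in $DM(k;R)$.

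For direct sums, the strategy is to construct the obvious comparison map and check it induces an isomorphism on motivic homology. Given a family of objects $\{X_i\}$ in $DM(k;R)$, each counit $C(X_i)\arrow X_i$ composed with the inclusion into $\bigoplus_i X_i$ factors uniquely through $C(\bigoplus_i X_i)$ by the universal property of colocalization, yielding a natural morphism
\[
\bigoplus_i C(X_i)\arrow C\Bigl(\bigoplus_i X_i\Bigr).
\]
Both the source and target lie in $DMT(k;R)$, so by Corollary \ref{mixed} it suffices to show this morphism induces an isomorphism on $\Hom(R(j)[a],-)$ for all integers $a,j$. For the source, the key point is that $R(j)[a]$ is compact in the ambient category $DM(k;R)$; since direct sums in $DMT(k;R)$ are computed as in $DM(k;R)$ (because $DMT(k;R)$ is a localizing subcategory), we obtain $\Hom(R(j)[a],\bigoplus_i C(X_i))=\bigoplus_i \Hom(R(j)[a],C(X_i))$. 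The defining property of colocalization then identifies each summand with $\Hom(R(j)[a],X_i)$, and compactness of $R(j)[a]$ in $DM(k;R)$ collapses the direct sum to $\Hom(R(j)[a],\bigoplus_i X_i)$, which equals $\Hom(R(j)[a],C(\bigoplus_i X_i))$ by colocalization again. Comparing, the map in question is an isomorphism on motivic homology.

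The only subtlety, and the place I would be careful, is the interaction between compactness in $DMT(k;R)$ versus compactness in the ambient category $DM(k;R)$. One must use compactness of the $R(j)[a]$ in $DM(k;R)$ (to interchange $\Hom$ with $\bigoplus_i X_i$), not merely in $DMT(k;R)$; but this is given, since the $M^c(X)(a)[b]$ are compact in $DM(k;R)$ by the results of Kelly cited in section \ref{motives}. Once this is in hand the argument is essentially formal, and Corollary \ref{mixed} finishes the proof.
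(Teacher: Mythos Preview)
Your proof is correct. For products, your argument is essentially the same as the paper's: $C$ is a right adjoint, hence preserves products. (One small wording issue: you write that a right adjoint ``preserves any limits that exist in the target,'' but a right adjoint $C\colon DM(k;R)\arrow DMT(k;R)$ preserves limits computed in its \emph{source} $DM(k;R)$; your subsequent sentences use this correctly, so the slip is harmless. The paper is slightly more explicit that $DMT(k;R)$ has products, citing \cite[Proposition 8.4.6]{Neemanbook}, and warns that the inclusion $DMT(k;R)\hookrightarrow DM(k;R)$ need not preserve them.)

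For direct sums, the paper takes a shorter route: it simply invokes \cite[Theorem 5.1]{Neeman}, which says in general that colocalization with respect to a set of compact objects preserves arbitrary direct sums. Your argument---constructing the comparison map, reducing via Corollary \ref{mixed} to motivic homology, and then using compactness of $R(j)[a]$ in $DM(k;R)$ to commute $\Hom$ past $\bigoplus$---is exactly the proof of Neeman's theorem specialized to this situation. So the approaches agree in spirit; yours is more self-contained, while the paper's is a one-line citation. Your care about compactness in $DM(k;R)$ versus $DMT(k;R)$ is well placed and correctly handled.
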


\begin{proof}
Because the category $DMT(k;R)$ is compactly generated, it has
arbitrary products \cite[Proposition 8.4.6]{Neemanbook}.
(Beware that the inclusion $DMT(k;R)\arrow DM(k;R)$ preserves arbitrary
direct sums,
but need not preserve arbitrary products.) Because the functor
$X\mapsto C(X)$ from $DM(k;R)$ to $DMT(k;R)$
is a right adjoint, it preserves arbitrary products.
Because the functor $X\mapsto C(X)$ is colocalization with respect
to a set of compact objects in $DM(k;R)$ (namely $R(j)$ for integers
$j$), it also preserves arbitrary direct sums
\cite[Theorem 5.1]{Neeman}.
\end{proof}

For any motives
$X$ and $Y$ in $DM(k;R)$, there is a canonical morphism
$$C(X)\otimes C(Y)\arrow C(X\otimes Y),$$
generally not an isomorphism.
Indeed, tensoring the morphisms $C(X)\arrow X$ and $C(Y)\arrow Y$
gives a morphism $C(X)\otimes C(Y)\arrow X\otimes Y$. Since
$C(X)\otimes C(Y)$ is a mixed Tate motive, this morphism factors
uniquely through $C(X\otimes Y)$, as we want.

\begin{theorem}
\label{main}
Let $k$ be a field.
Let $R$ be a commutative ring.
Let $X$ be an object
of the category $DM(k;R)$ of motives (for example,
$X$ could be the motive $M^c(W)$ for a separated $k$-scheme $W$
of finite type,
if the exponential characteristic of $k$ is invertible in $R$).
The following are equivalent.

(1) $X$ is a mixed Tate motive.

(2) $X$ satisfies the motivic K\"unneth property, meaning
that the morphism
$$C(X)\otimes C(M(Y))\arrow C(X\otimes M(Y))$$
of mixed Tate motives is an isomorphism for every
smooth projective variety $Y$ over $k$.

(3) $X$ satisfies the apparently stronger property that
$$C(X)\otimes C(Y)\arrow C(X\otimes Y)$$
is an isomorphism for every motive $Y$ in $DM(k;R)$.

If $X$ belongs to the subcategory $DM_{\gm}(k;R)$ of geometric motives,
for example if $X=M^c(B)$ for some separated $k$-scheme $B$ of finite type,
then (1)--(3) are also equivalent to:

(4) $X$ is a ``small'' mixed Tate motive, meaning that $X$ belongs
to the smallest thick subcategory 
of $DM(k;R)$ that contains $R(j)$ for all integers $j$.
\end{theorem}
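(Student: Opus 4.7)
The plan is to prove (1)$\Rightarrow$(3), (3)$\Rightarrow$(2), (2)$\Rightarrow$(1), and separately (1)$\Leftrightarrow$(4) under the hypothesis that $X$ is geometric. The implication (3)$\Rightarrow$(2) is immediate, since $M(Y)$ is an object of $DM(k;R)$ for every smooth projective $Y$.

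For (1)$\Rightarrow$(3), the main tool is a localizing-subcategory argument. Fix any motive $Y$ and let $E_Y$ be a cone of $C(Y)\arrow Y$, so $E_Y$ has zero motivic homology. The full subcategory of mixed Tate motives $Z$ for which $Z\otimes E_Y$ has zero motivic homology is closed under distinguished triangles (because $-\otimes E_Y$ preserves triangles and motivic homology is homological) and under arbitrary direct sums (because each $R(i)[j]$ is compact and $-\otimes E_Y$ preserves direct sums), hence is a localizing subcategory of $DMT(k;R)$. It contains every Tate motive $R(j)$, since $R(j)\otimes E_Y\cong E_Y(j)$ still has zero motivic homology, so by Lemma \ref{gens} it equals $DMT(k;R)$. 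Taking $Z=X$, we get that $X\otimes C(Y)\arrow X\otimes Y$ induces an isomorphism on motivic homology. Since $DMT(k;R)$ is closed under $\otimes$, the motive $C(X)\otimes C(Y)=X\otimes C(Y)$ is mixed Tate, and the induced factorization $C(X)\otimes C(Y)\arrow C(X\otimes Y)$ is a morphism of mixed Tate motives that is an isomorphism on motivic homology, hence an isomorphism by Corollary \ref{mixed}. This is (3).

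The heart of the theorem is (2)$\Rightarrow$(1). Let $E$ be a cone of $C(X)\arrow X$; the goal is $E=0$. By Lemmas \ref{smoothgens} and \ref{gens}, the objects $M(Y)(a)[b]$ with $Y$ smooth projective over $k$ and $a,b\in\Z$ generate $DM(k;R)$, so it suffices to show $\Hom(M(Y)(a)[b],E)=0$ for all such $Y,a,b$. Using Lemma \ref{inthom} together with the Poincar\'e duality $M(Y)^*\cong M(Y)(-n)[-2n]$ for $Y$ smooth projective of dimension $n$, this reduces to showing that $M(Y)\otimes E$ has zero motivic homology for every smooth projective $Y$. To obtain this, I would examine the composite
$$C(X)\otimes C(M(Y))\ \arrow\ C(X)\otimes M(Y)\ \arrow\ X\otimes M(Y).$$
Hypothesis (2) says the unique factor $C(X)\otimes C(M(Y))\arrow C(X\otimes M(Y))$ of this composite through the colocalization is an isomorphism, and $C(X\otimes M(Y))\arrow X\otimes M(Y)$ is an isomorphism on motivic homology by the very definition of $C(-)$; so the composite is an isomorphism on motivic homology. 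The first arrow $C(X)\otimes C(M(Y))\arrow C(X)\otimes M(Y)$ is an isomorphism on motivic homology by the (1)$\Rightarrow$(3) localizing-subcategory argument applied to the mixed Tate motive $C(X)$ and the motive $M(Y)$. Hence the second arrow is as well, and its cone is exactly $E\otimes M(Y)$, which therefore has zero motivic homology.

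For (1)$\Leftrightarrow$(4) when $X\in DM_{\gm}(k;R)$, the direction (4)$\Rightarrow$(1) is immediate. For the converse, note that since $DMT(k;R)$ is a localizing subcategory of $DM(k;R)$, direct sums in $DMT(k;R)$ coincide with direct sums in $DM(k;R)$; so any object of $DMT(k;R)$ that is compact in $DM(k;R)$ is compact in $DMT(k;R)$. Since $DMT(k;R)$ is compactly generated by $\{R(j)\}_{j\in\Z}$, Neeman's theorem as quoted in section \ref{motives} places $X$ in the smallest thick subcategory of $DMT(k;R)$ containing the $R(j)$; this is in particular a thick subcategory of $DM(k;R)$ containing the $R(j)$, which is (4). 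The main obstacle I anticipate is (2)$\Rightarrow$(1) — specifically, packaging cleanly that tensoring with a mixed Tate motive preserves isomorphisms on motivic homology, so that the Künneth hypothesis can be combined with the colocalization to transport the vanishing from $C(X\otimes M(Y))\arrow X\otimes M(Y)$ to $C(X)\otimes M(Y)\arrow X\otimes M(Y)$; once that package is in place via the localizing-subcategory argument, the rest is formal.
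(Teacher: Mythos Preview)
Your proof is correct and uses essentially the same ingredients as the paper: localizing-subcategory arguments, Corollary \ref{mixed}, the generators of Lemma \ref{smoothgens}, duality via Lemma \ref{inthom}, and Neeman's theorem for (4). The only difference is organizational: the paper proves the cycle $(1)\Rightarrow(2)\Rightarrow(3)\Rightarrow(1)$, where the step $(3)\Rightarrow(1)$ is slightly slicker because the cone $X_2$ of $C(X)\to X$ directly inherits property (3) (as (3) is closed under triangles), so $X_2\otimes Y$ has zero motivic homology for \emph{all} $Y$ at once; you instead prove $(2)\Rightarrow(1)$ directly by reapplying your ``tensoring with mixed Tate preserves motivic-homology isomorphisms'' lemma to $C(X)$, which works just as well.
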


Let us explain why properties (2) and (3) deserve to be called K\"unneth
properties of $X$. Since $C(X)\otimes C(Y)$ and $C(X\otimes Y)$
are both mixed Tate motives, the morphism $C(X)\otimes C(Y)
\arrow C(X\otimes Y)$ is an isomorphism if and only if it induces
an isomorphism on motivic homology groups, by Corollary
\ref{mixed}. The motivic homology
groups of $C(X\otimes Y)$ are simply the motivic homology groups
of $X\otimes Y$. The motivic homology groups of $C(X)\otimes C(Y)$
are the ``output'' of the spectral sequence of Theorem
\ref{ss}, with $E_2$ term 
$$\Tor_*^{H_*(k,*)}(H_*(C(X),R(*)),
H_*(C(Y),R(*)))=\Tor_*^{H_*(k,*)}(H_*(X,R(*)),
H_*(Y,R(*))).$$
So property (3) is saying that this K\"unneth
spectral sequence converges to the motivic homology of $X\otimes Y$.

\begin{proof}
The K\"unneth property (2) is preserved under
arbitrary direct sums of motives $X$, since the tensor product $\otimes$
and the functor $X\mapsto C(X)$ (by Lemma
\ref{colocsum}) preserve arbitrary direct sums.
Also, if it holds for two of the three
motives in a distinguished triangle, then it holds for the third.
Finally, the motives $R(j)$ have the K\"unneth property. It follows
that every mixed Tate motive in $DM(k;R)$ has the K\"unneth
property. That is, (1) implies (2).

Next, let $X$ be a motive in $DM(k;R)$ with the K\"unneth property
(2) with respect to smooth projective varieties over $k$. The
statement that the morphism
$$C(X)\otimes C(Y)\arrow C(X\otimes Y)$$
is an isomorphism is preserved under arbitrary direct sums of motives $Y$.
Also, if it holds for two motives $Y$ in a distinguished triangle,
then it holds for the third. By Lemma \ref{smoothgens},
$X$ satisfies the K\"unneth
property (3) with respect to all motives $Y$.

We now show that (3) implies (1). 
As above, the ``cellular approximation''
$C(X)$ is the unique mixed Tate motive with a morphism
$C(X)\arrow X$ that induces an isomorphism on motivic homology groups.
Since $C(X)$ is a mixed Tate motive,
it has the K\"unneth property. Let $X_2$ be a cone of the morphism
$C(X)\arrow X$. It suffices to show that $X_2=0$.

The motivic homology groups of $X_2$ are equal to zero. Also,
$X_2$ satisfies the K\"unneth property. So the motivic homology
of $X_2\otimes Y$ is zero for every motive $Y$ in $DM(k;R)$. In particular,
for all smooth projective varieties $Y$ over $k$ and all integers
$a$ and $b$,
the motivic homology group $\Hom(R,
\linebreak[0]
X_2\otimes (M(Y)(a)[b])^*)$ is zero.
By Lemma \ref{inthom}, it follows that $\Hom(M(Y)(a)[b],
\linebreak[0]
X_2)=0$
for all smooth projective varieties $Y$ over $k$ and all integers
$a$ and $b$.
By Lemma \ref{smoothgens},
it follows that $X_2=0$. We have shown that (3) implies (1).

Finally, if $X$ belongs to the subcategory $DM_{\gm}(k;R)$
of geometric motives, then we showed after Lemma \ref{smoothgens}
that (1) and (4) are equivalent.
\end{proof}

The following consequence is not surprising, but it seems worth
mentioning. Dugger and Isaksen mentioned that it is not immediately
clear how to show that a given object in the stable homotopy category,
$SH(k)$, for example an elliptic curve over $k$, is not cellular 
\cite[section 1.2]{DI}.
The functor $SH(k)\arrow DM(k;R)$ takes cellular objects to mixed
Tate motives. The following result describes which smooth projective
varieties have motives which are mixed Tate motives. As a very special
case, we see that elliptic curves are not mixed Tate motives (for
any nonzero coefficient ring), and so elliptic curves
are not cellular in $SH(k)$.

\begin{corollary}
\label{smoothTate}
Let $X$ be a smooth proper scheme over a field $k$. Let $R$
be a commutative ring such that the exponential characteristic of $k$
is invertible in $R$. If the motive $M(X)$ in $DM(k;R)$ is a mixed
Tate motive, then the Chow motive of $X$ with coefficients in $R$
is a summand
of a finite direct sum of Tate motives $R(a)[2a]$. So, for example,
$CH_*(X)\otimes_{\Z}R\arrow H_*(X_{\C},R)$ is an isomorphism if
there is an embedding $k\inj \C$. In particular, $H_*(X_{\C},R)$
is concentrated in even degrees.
\end{corollary}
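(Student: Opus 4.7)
The plan is to verify that $X$ satisfies the weak Chow K\"unneth property and then invoke Theorem \ref{smoothproper}, whose forward direction will produce the Tate decomposition of the Chow motive and whose converse half will supply the Betti comparison statements.

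Extracting Chow K\"unneth from the mixed Tate hypothesis is direct: since $X$ is proper, the canonical map $M(X)\to M^c(X)$ is an isomorphism, so $M^c(X)$ is mixed Tate, and Theorem \ref{main} gives the motivic K\"unneth property. The degeneration observation made after Theorem \ref{ss} --- the entry $E_2^{0,-2j}$ admits no incoming or outgoing differentials, because the whole spectral sequence lives in columns $\leq 0$ and rows $\leq -2j$ when both factors are motives of finite-type $k$-schemes --- then yields the Chow K\"unneth isomorphism $CH_*(X;R)\otimes_R CH_*(Z;R)\cong CH_*(X\times_k Z;R)$ for every separated $k$-scheme $Z$ of finite type.

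Passing from Chow K\"unneth to weak Chow K\"unneth is the one step with genuine content. For a finitely generated $F/k$, write $F=k(V)$ with $V$ a $k$-variety of dimension $d$, and use the standard identification $CH_i(X_F;R)=\dlim_{U\subset V}CH_{i+d}(X\times_k U;R)$ obtained from Lemma \ref{basicseq} by taking limits over nonempty opens $U\subset V$. Chow K\"unneth rewrites each term as $\bigoplus_j CH_j(X;R)\otimes_R CH_{i+d-j}(U;R)$; the summands with $j<i$ are zero by dimension, and those with $j>i$ vanish in the limit because any cycle on $U$ of dimension less than $d$ is supported on a proper closed subset that can be excised by shrinking $U$. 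What survives is the $j=i$ summand $CH_i(X;R)\otimes R\cdot[U]\cong CH_i(X;R)$, so $CH_*(X;R)\to CH_*(X_F;R)$ is surjective (in fact an isomorphism).

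Theorem \ref{smoothproper} now asserts that the Chow motive of $X$ with coefficients in $R$ is a summand of a finite direct sum of Tate motives $R(a)[2a]$; its converse half then provides the isomorphism $CH_*(X;R)\to H_*(X_{\C},R)$ for any embedding $k\inj\C$, and the concentration of $H_*(X_{\C},R)$ in even degrees follows because the Betti realization of each summand $R(a)[2a]$ is a copy of $R$ placed in homological degree $2a$. I expect the limit argument in the middle step to be the only point of real content; the remainder of the proof simply strings together Theorems \ref{main}, \ref{ss}, and \ref{smoothproper}.
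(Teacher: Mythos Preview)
Your proof is correct and follows the same route as the paper: mixed Tate $\Rightarrow$ motivic K\"unneth (Theorem~\ref{main}) $\Rightarrow$ Chow K\"unneth (discussion after Theorem~\ref{ss}) $\Rightarrow$ Tate decomposition via Theorem~\ref{smoothproper}. The only difference is that you spell out the passage from Chow K\"unneth to weak Chow K\"unneth via the limit over opens of a model of $F$, whereas the paper simply treats this implication as known (it is stated in the introduction) and invokes Theorem~\ref{smoothproper} directly; so the step you call ``the one point of real content'' is in fact routine, not a gap the paper glosses over.
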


\begin{proof}
By Theorem \ref{main}, $X$ satisfies the K\"unneth property
for motivic homology groups with coefficients in $R$.
By the discussion of the K\"unneth
spectral sequence after Theorem \ref{ss}, it follows that
$X$ has the Chow K\"unneth property: the homomorphism
$$CH_*(X;R)\otimes_RCH_*(Y;R)\arrow CH_*(X\times_k Y;R)$$
is an isomorphism for every 
separated $k$-scheme $Y$ of finite type. By Theorem \ref{smoothproper},
the Chow motive of $X$ with coefficients in $R$
is a summand
of a finite direct sum of Tate motives. The theorem includes several
consequences of that property, for example that
$CH_*(X;R)\arrow H_*(X_{\C},R)$ is an isomorphism if
there is an embedding $k\inj \C$.
\end{proof}

\section{The motive of a quotient stack}
\label{quotientsect}

Edidin and Graham defined the motivic homology of a quotient
stack \cite[sections 2.7 and 5.3]{EG}.
In this section, we define the compactly supported motive
of a quotient stack, in such a way that we recover the same
motivic homology groups. One benefit of defining
the motive of a quotient stack is that it makes sense to ask
whether a given stack, such as $BG$
for an affine group scheme $G$,
is mixed Tate, meaning that the motive $M^c(BG)$ is mixed Tate.

The motive $M(BG)$ (not compactly supported) in $DM(k;R)$
was already defined,
in effect, by Morel and Voevodsky \cite[section 4.2]{MV}.
Its motivic cohomology
is the motivic cohomology of $BG$. We need to define $M^c(BG)$
because that is the motive relevant to the motivic homology
of $BG\times X$ for separated schemes $X$ of finite type over $k$.
To see the difference between the two motives, write $G_m$ for
the multiplicative group over $k$. Then
$M(BG_m)$ is the homotopy
colimit of the motives $M(\P^j)$, and so $M(BG_m)$ is isomorphic
to $\oplus_{j\geq 0} \Z(j)[2j]$ in $DM(k;\Z)$.
By contrast, $M^c(BG_m)$ is the homotopy
limit of the motives $M(\P^{j-1})(-j)[-2j]$ by the definition below,
and so $M^c(BG_m)$ is isomorphic to $\prod_{j\leq -1}\Z(j)[2j]$
in $DM(k;\Z)$. This product is isomorphic
to the direct sum
$\oplus_{j\leq -1}\Z(j)[2j]$ by Lemma \ref{bgm}, from which we see
that $M^c(BG_m)$ is mixed Tate.

Another possible name for the mixed Tate property of $BG$ would be
the {\it motivic K\"unneth property}. Indeed,
by Theorem \ref{main}, $BG$ is mixed Tate
if and only if $BG$ has the motivic K\"unneth property
in the sense that the K\"unneth spectral sequence
$$E_2^{pq}=\Tor^{H_*(k,*)}_{-p,-q,j}(H_*(BG,R(*)),H_*(Y,R(*)))\imp
H_{-p-q}(BG\times_k Y,R(j))$$
converges to the groups on the right
for every separated $k$-scheme $Y$ of finite type.

Before defining the compactly supported motive of a quotient stack,
we recall the definition of homotopy limits.
Let
$$\cdots\arrow X_2 \arrow X_1$$
be a sequence of morphisms in the category $DM(k;R)$ of motives.
Since $DM(k;R)$ is compactly generated, arbitrary
products exist in $DM(k;R)$
\cite[Proposition 8.4.6]{Neemanbook}. Dualizing B\"okstedt
and Neeman's definition of homotopy colimits,
the {\it homotopy limit }$\holim_j X_j$ in $DM(k;R)$ is defined
as the fiber of the morphism $f\colon
\prod X_j\arrow\prod X_j$ given by the identity
minus the shift map \cite{BN}.
(In other words, the homotopy limit is $\cone(f)[-1]$;
so it is well-defined up to isomorphism, but not necessarily up to a unique
isomorphism.)

Define a {\it quotient stack }over a field $k$ to be an algebraic
stack over $k$ which is the quotient stack of some quasi-projective
scheme $Y$ over $k$ by an action of an affine group scheme $G$ of finite
type over $k$ such that there is a $G$-equivariant ample line bundle
on $Y$. (A short introduction to quotient stacks
is \cite[Tag 04UV]{Stacks}.
It would be more natural to allow quotients of algebraic spaces
by affine group schemes,
but this definition of quotient stacks is sufficient for our applications.)
For $Y$ quasi-projective over $k$, the assumption that there is
a $G$-equivariant ample line bundle is automatic when $G$ is finite,
or when $G$ is smooth over $k$ and $Y$ is normal, by Sumihiro's
equivariant completion theorem \cite{Sumihiro}
and \cite[Corollary 1.6]{GIT}. 
For example, the stack $BG$ means the quotient stack $\Spec(k)/G$.

The {\it dimension }of a locally Noetherian stack in defined in such a way
that a quotient stack $X=A/G$
has dimension $\dim(A)-\dim(G)$
\cite[Tag 0AFL]{Stacks}. 
For example, $BG$ is a smooth stack of dimension $-\dim(G)$ over $k$.

We can now define the compactly supported motive of a quotient stack,
with coefficients in a given commutative ring $R$.
Let $k$ be a field. Let $R$ be a commutative ring in which
the exponential characteristic of $k$ is invertible.
Let $X$ be a quotient stack over $k$.
Let $\cdots \surj V_2 \surj V_1$ be a sequence of surjections
of vector bundles
over $X$. 
Write $n_i$ for the rank of the bundle $V_i$.
Think of the total space of $V_i$ as a stack over $k$.
For each $i$, let $S_i$ be a closed substack
of $V_i$ such that $V_i-S_i$ is a separated scheme
and $S_{i+1}$ is contained in the inverse image of $S_i$ under the morphism
$f_i\colon V_{i+1}\surj V_i$ for all $i$. Assume that 
the codimension of $S_i$ in $V_i$ goes to infinity with $i$.
(Such vector bundles $V_i$ and closed subsets $S_i$ exist
because $X$ is a quotient stack. In more detail, if we write
$X$ as a quotient stack $Y/G$, then we can use
bundles $V_i$ which are given by suitable representations $V$ of $G$.
Take $V_i-S_i$ to be of the form $(Y\times (V-S))/G$ with
$(V-S)/G$ a quasi-projective scheme \cite[Remark 1.4]{Totarochow}.
Then
$(Y\times (V-S))/G$ is also a quasi-projective scheme
by \cite[Proposition 7.1]{GIT},
using that $Y$ has a $G$-equivariant ample line bundle.)

Define the motive $M^c(X)$ in $DM(k;R)$ to be the homotopy limit
of the sequence:
$$\cdots \arrow M^c(V_2-S_2)(-n_2)[-2n_2]
\arrow M^c(V_1-S_1)(-n_1)[-2n_1].$$
The morphisms here are the composition
\begin{align*}
M^c(V_{i+1}-S_{i+1})(-n_{i+1})[-2n_{i+1}] &\arrow
M^c(V_{i+1}-f_i^{-1}(S_i))(-n_{i+1})[-2n_{i+1}]\\
  &\cong M^c(V_i-S_i)(-n_i)[-2n_i],
\end{align*}
where the first morphism is the flat pullback associated
to an open inclusion, and the isomorphism follows from
homotopy invariance for affine bundles.
We will show that this motive is independent of the choice of
vector bundles
$V_i$ and closed substacks $S_i$.

Once we check that this motive is well defined
in Theorem \ref{motstack}, it will be immediate
that the motivic homology
of a quotient stack $X=Y/G$ given by the motive $M^c(X)$
agrees with the motivic homology of $X$ as defined by
Edidin and Graham \cite[sections 2.7 and 5.3]{EG}.
Namely, any given motivic homology
group $H_a(\cdot,R(b))$ of the sequence above is eventually constant.
In our notation, Edidin and Graham defined $H_a(X,R(b))$ to be equal to
$$H_a(((Y\times(V_j-S_j))/G)(-n_j)[-2n_j],R(b))\cong
H_{a+2n_j}((Y\times(V_j-S_j))/G,R(b+n_j))$$
for any $j$ sufficiently large.

Tudor Padurariu observed that for a smooth quotient stack $X$
of pure dimension $n$ over $k$, the motive $M(X)$ determines
$M^c(X)$ in a simple way: namely, $M^c(X)\cong M(X)^*(n)[2n]$.
In particular, $M^c(BG)\cong M(BG)^*(-\dim(G))[-2\, \dim(G)]$,
since $BG$ is a smooth stack of dimension $-\dim(G)$ over $k$.
For example, it follows that
$$CH_iBG\cong CH^{-\dim(G)-i}BG.$$
Padurariu's argument uses
that the dual of a direct sum in $DM(k;R)$ is a product,
and so the dual of a homotopy colimit is a homotopy limit. By contrast,
the dual of a product does not have a simple description
in general, and so it is not clear whether $M^c(X)^*$ 
is isomorphic to $M(X)(-n)[-2n]$ for a smooth quotient stack $X$
of pure dimension $n$ over $k$.

The following filtration of the category $DM(k;R)$ is
very convenient for our arguments.
Namely, for an integer $j$,
let $D_j(k;R)$ be the smallest localizing subcategory of $DM(k;R)$
that contains $M^c(X)(a)$ for all separated schemes $X$
of finite type over $k$
and all integers $a$ such that $\dim(X)+a\leq j$. (Another possible
notation would be $d_{\leq j}DM(k;R)$, by analogy with a notation
used for effective motives \cite[proof of Corollary 1.9]{HK}.)
Thus we have
a sequence of triangulated subcategories
$$\cdots \subset D_{-1} \subset D_0 \subset D_1 \subset \cdots$$
of $DM(k;R)$.

For an integer $j$, let $E_j$ be the smallest localizing
subcategory of $DM(k;R)$ that contains $M(Y)(a)$ for all smooth
projective varieties $Y$ over $k$ and all integers $a>j$. This is related
to the {\it slice filtration }of motives; in that setting,
$E_j$ would be called $DM^{\eff}(k;R)(j+1)$ \cite[section 1]{HK}.
For a triangulated subcategory $E$ of a triangulated category $T$,
the {\it right orthogonal }to $E$ is the full subcategory
$E^{\perp}$ of all objects $M$ such that $\Hom(N,M)=0$
for every $N$ in $E$ \cite{Orlov}.
The right orthogonal $E^{\perp}$ is always
a {\it colocalizing }subcategory of $T$, meaning a triangulated
subcategory that is closed under arbitrary products in $T$.
In the notation of the slice filtration, $E_j^{\perp}$ might be called
$\nu_{\leq j}DM(k;R)$ \cite[Definition 1.3]{HK}.

\begin{lemma}
\label{inclusion}
The subcategory $D_j$ of $DM(k;R)$ is contained in the right
orthogonal $E_j^{\perp}$.
\end{lemma}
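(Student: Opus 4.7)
The plan is to reduce the inclusion $D_j \subset E_j^\perp$ to a single Hom vanishing, which will then follow from duality and the standard vanishing of motivic homology. First I would verify that $E_j^\perp$ is itself a localizing subcategory: closure under triangles and shifts is automatic from the long exact sequences in Hom, and closure under arbitrary direct sums uses that the generators $M(Y)(a)$ of $E_j$ are compact, so that $\Hom(M(Y)(a)[c], \bigoplus_i M_i) = \bigoplus_i \Hom(M(Y)(a)[c], M_i)$. A routine localizing-subcategory argument (the class of $N$ with $\Hom(N[*], \bigoplus_i M_i) = 0$ is localizing and contains the compact generators, hence contains all of $E_j$) then propagates this vanishing from generators to all of $E_j$. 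With $E_j^\perp$ localizing, the inclusion $D_j \subset E_j^\perp$ reduces to checking that each generator $M^c(X)(b)$ of $D_j$, with $\dim(X) + b \leq j$, lies in $E_j^\perp$. A symmetric localizing-subcategory reduction on the other side further reduces this to showing
$$\Hom(M(Y)(a)[c],\, M^c(X)(b)) = 0$$
for every smooth projective variety $Y$ over $k$ of dimension $n$, every integer $a > j$, and every $c \in \Z$.

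To compute this Hom, I would invoke Lemma \ref{inthom} to rewrite it as $\Hom(R,\, (M(Y)(a)[c])^* \otimes M^c(X)(b))$. Poincar\'e duality for $Y$ smooth projective gives $M(Y)^* \cong M^c(Y)(-n)[-2n]$, and properness of $Y$ gives $M(Y) \cong M^c(Y)$, so $(M(Y)(a)[c])^* \cong M(Y)(-n-a)[-2n-c]$. Combined with $M(Y) \otimes M^c(X) \cong M^c(Y \times_k X)$, this identifies the Hom with the motivic homology group $H^M_{2n+c}(Y \times_k X,\, R(n+a-b))$.

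Finally, the vanishing recalled in section \ref{motives} says $H^M_i(Z, R(j)) = 0$ unless $j \leq \dim(Z)$. Here $\dim(Y \times_k X) = n + \dim(X)$ and the twist is $n + a - b$; the hypotheses $a > j$ and $\dim(X) + b \leq j$ give $a - b > \dim(X)$, hence $n + a - b > n + \dim(X)$, so the relevant motivic homology group vanishes. The bulk of the work lies in the two localizing-subcategory reductions that bring us down to generators on both sides; the final computation is a one-line dimension count once duality has been applied, so I do not expect any serious obstacle.
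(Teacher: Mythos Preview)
Your proof is correct and follows essentially the same route as the paper's. Both arguments reduce to the single vanishing $\Hom(M(Y)(a)[c], M^c(X)(b))=0$ for $Y$ smooth projective, $a>j$, and $\dim(X)+b\leq j$, and both deduce this from Poincar\'e duality for $Y$ together with the vanishing $H^M_i(Z,R(m))=0$ for $m>\dim(Z)$ applied to $Z=Y\times_k X$. The only organizational difference is that you front-load the fact that $E_j^\perp$ is localizing (which the paper records separately as Lemma~\ref{perpcoloc}) and use it to reduce on the $D_j$ side, whereas the paper phrases that reduction directly in terms of the compactness of $M(Y)(a)$; these are the same argument.
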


\begin{proof}
As mentioned in section \ref{motives}, for any separated scheme
$Z$ of finite type over $k$, we have $H_j(Z,R(a))=0$
for all integers $a$ and $j$ with $a>\dim(Z)$. Let $Y$
be a smooth projective variety over $k$, and let $n=\dim(Y)$.
Then we have $H_j(Y\times Z, R(a))=0$ for all integers $a$
and $j$ with $a>n+\dim(Z)$. Equivalently,
$\Hom_{DM(k;R)}(R(a)[b],
\linebreak[0]
M(Y)\otimes M^c(Z))=0$ for all integers $a$ and $b$
with $a>n+\dim(Z)$.

As mentioned in section \ref{motives}, we have
$$M(Y)^*\cong M(Y)(-n)[-2n].$$
So $\Hom_{DM(k;R)}(R(a)[b],
\linebreak[0]
M(Y)^*\otimes M^c(Z))=0$
for all integers $a$ and $b$ with $a>\dim(Z)$. By Voevodsky
and Kelly's results (see the proof of Lemma \ref{inthom}),
it follows that $\Hom_{DM(k;R)}(M(Y)(a)[b],
\linebreak[0]
M^c(Z))=0$
for all integers $a$ and $b$ with $a>\dim(Z)$. Since the object
$M(Y)(a)$ is compact in $DM(k;R)$, it follows that
$\Hom_{DM(k;R)}(M(Y)(a)[b],
\linebreak[0]
N)=0$ for all motives $N$ in the subcategory
$D_j$ and all integers $a$ and $b$ such that $a>j$. 
Consequently, $D_j$ is contained in the right orthogonal $E_j^{\perp}$.
\end{proof}

Here is a convenient
formal property of the subcategories $E_j^{\perp}$.

\begin{lemma}
\label{perpcoloc}
For any integer $j$, the subcategory
$E_j^{\perp}$ of $DM(k;R)$ is both localizing and colocalizing.
That is, it is a triangulated subcategory of $DM(k;R)$
which is closed under arbitrary direct sums and arbitrary
products in $DM(k;R)$.
\end{lemma}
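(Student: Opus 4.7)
The plan is to treat the two closure properties separately, since they have different flavors. Closure of $E_j^{\perp}$ under arbitrary products, together with the fact that it is a triangulated subcategory (closed under shifts and cones), follows from purely formal properties of right orthogonals in a triangulated category: since $\Hom(N,-)$ is a cohomological functor that sends products to products, the class of $M$ with $\Hom(N,M)=0$ for every $N\in E_j$ is both triangulated and closed under products. This is the ``always'' statement referenced in the paragraph preceding the lemma.

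The substantive content is closure of $E_j^{\perp}$ under arbitrary direct sums, and the key input is that $E_j$ is generated, as a localizing subcategory, by compact objects. By construction, $E_j$ is the smallest localizing subcategory of $DM(k;R)$ containing the set
\[
{\cal P}_j=\{M(Y)(a)[b]:Y\text{ smooth projective over }k,\ a>j,\ b\in\Z\},
\]
and by Lemma \ref{smoothgens} each element of ${\cal P}_j$ is compact in $DM(k;R)$. Given a family $\{M_\alpha\}$ of objects of $E_j^{\perp}$, set $X=\oplus_\alpha M_\alpha$. For any $N=M(Y)(a)[b]\in{\cal P}_j$, compactness yields
\[
\Hom(N,X)=\Hom\bigl(N,\oplus_\alpha M_\alpha\bigr)\cong\oplus_\alpha\Hom(N,M_\alpha)=0,
\]
where each summand vanishes because $N\in E_j$ and $M_\alpha\in E_j^{\perp}$.

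The final step is to bootstrap from the generators to all of $E_j$. Consider the full subcategory
\[
{\cal C}_X=\{N\in DM(k;R):\Hom(N[b],X)=0\text{ for all }b\in\Z\}.
\]
Since $\Hom(-,X)$ is cohomological and turns direct sums into products, ${\cal C}_X$ is triangulated and closed under arbitrary direct sums; that is, it is a localizing subcategory of $DM(k;R)$. The previous paragraph shows ${\cal P}_j\subset{\cal C}_X$, so by minimality of $E_j$ we have $E_j\subset{\cal C}_X$, which is exactly the statement that $X\in E_j^{\perp}$. Thus $E_j^{\perp}$ is closed under arbitrary direct sums, completing the proof.

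There is no real obstacle here; the argument is formal. The only point that would fail without Lemma \ref{smoothgens} is the transition from $\oplus_\alpha \Hom(N,M_\alpha)$ back to $\Hom(N,\oplus_\alpha M_\alpha)$, so the compactness of the chosen generators of $E_j$ is doing all the work.
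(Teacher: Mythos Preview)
Your proof is correct and follows the same approach as the paper's: $E_j^{\perp}$ is triangulated and product-closed as any right orthogonal, and closure under direct sums comes from the compactness of the generators of $E_j$. The only difference is that the paper simply cites \cite[Theorem 5.1]{Neeman} for the last step, whereas you have spelled out the compactness-plus-bootstrap argument that underlies that citation.
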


\begin{proof}
Since $E_j$ is a triangulated subcategory of $DM(k;R)$,
$E_j^{\perp}$ is a triangulated subcategory of $DM(k;R)$. As is true
for any
right orthogonal, $E_j^{\perp}$ is closed under arbitrary products
in $DM(k;R)$. Because $E_j$ is generated by a set of compact objects
in $DM(k;R)$, $E_j^{\perp}$ is also closed under arbitrary direct sums
in $DM(k;R)$ \cite[Theorem 5.1]{Neeman}.
\end{proof}

\begin{lemma}
\label{separating}
The intersection of the subcategories $E_j^{\perp}$ of $DM(k;R)$
for all integers $j$
is zero. It follows that the intersection of the subcategories $D_j$
for all integers $j$ is zero.
\end{lemma}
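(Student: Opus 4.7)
The plan is to prove the first statement directly from the definition of a generating set via Lemmas \ref{smoothgens} and \ref{gens}, and then deduce the second statement as an immediate consequence of Lemma \ref{inclusion}.

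First I would unwind the definition. An object $X$ lies in the intersection $\bigcap_{j\in\Z}E_j^{\perp}$ if and only if $\Hom(N,X)=0$ for every $N\in\bigcup_{j\in\Z}E_j$. Because each $E_j$ is closed under shifts and twists by $R(a)$ with $a>j$, for any smooth projective variety $Y$ over $k$ and any integers $a$ and $b$, the object $M(Y)(a)[b]$ lies in $E_{a-1}$, hence in $\bigcup_{j\in\Z}E_j$. So if $X\in\bigcap_j E_j^{\perp}$, then
\[
\Hom_{DM(k;R)}(M(Y)(a)[b],X)=0
\]
for every smooth projective $k$-variety $Y$ and all integers $a,b$.

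Now I would invoke Lemma \ref{smoothgens}, which says that the compact objects $M(Y)(a)$, for $Y$ smooth projective over $k$ and $a\in\Z$, form a set of compact generators of $DM(k;R)$. Combined with Lemma \ref{gens} (which characterizes what ``generates'' means for a compactly generated triangulated category), the vanishing displayed above forces $X=0$. This proves $\bigcap_j E_j^{\perp}=0$.

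For the consequence, Lemma \ref{inclusion} gives the inclusion $D_j\subseteq E_j^{\perp}$ for every integer $j$. Intersecting over all $j$ yields $\bigcap_j D_j\subseteq\bigcap_j E_j^{\perp}=0$, as desired. I expect no real obstacle here: the whole argument is essentially a bookkeeping of definitions and a single appeal to Neeman's characterization of generators, with the only minor subtlety being to notice that $M(Y)(a)[b]$ eventually lies in $E_j$ for $j$ small enough (which is immediate since $a>j$ holds for all sufficiently negative $j$).
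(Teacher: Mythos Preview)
Your proof is correct and follows essentially the same approach as the paper: both arguments observe that membership in $\bigcap_j E_j^{\perp}$ forces $\Hom(M(Y)(a)[b],X)=0$ for all smooth projective $Y$ and all integers $a,b$, then invoke the fact that the $M(Y)(a)$ compactly generate $DM(k;R)$ (Lemma~\ref{smoothgens}) to conclude $X=0$, and finally deduce the statement about $D_j$ from Lemma~\ref{inclusion}. Your exposition is slightly more explicit about why $M(Y)(a)[b]\in E_{a-1}$ and about the role of Lemma~\ref{gens}, but the substance is identical.
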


\begin{proof}
If a motive $N$ belongs to $E_j^{\perp}$ for all integers $j$,
then $\Hom_{DM(k;R)}(M(Y)(a)[b],
\linebreak[0]
N)=0$ for all smooth projective
varieties $Y$ over $k$ and all integers $a$ and $b$.
Since the triangulated
category $DM(k;R)$ is generated
by the objects $M^c(Y)(a)$ for smooth projective varieties $W$ over $k$
and integers $a$ (section \ref{motives}), it follows that $N=0$.
Thus $\cap_j E_j^{\perp}=0$. By Lemma \ref{inclusion},
it follows that $\cap_j D_j=0$.
\end{proof}

\begin{corollary}
\label{limitzero}
Let $\cdots \arrow X_2\arrow X_1$ be an inverse system
in $DM(k;R)$ such that $X_j$ is in the subcategory $D_{a_j}$
with $a_j\arrow -\infty$. Then $\holim_j X_j=0$.
\end{corollary}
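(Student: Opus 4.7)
My plan is to reduce the problem to a $\Hom$-vanishing check on generators. By Lemma \ref{smoothgens}, the category $DM(k;R)$ is generated (in the sense of Lemma \ref{gens}) by the compact objects $M(Y)(a)$ with $Y$ smooth projective over $k$ and $a\in\Z$, so it suffices to show that $\Hom(C,\holim_j X_j)=0$ for every $C=M(Y)(a)[b]$ with $Y$ smooth projective and $a,b\in\Z$. Fix such a $C$ and apply $\Hom(C,-)$ to the defining distinguished triangle
\[ \holim_j X_j \arrow \prod_j X_j \xrightarrow{1-s} \prod_j X_j \arrow \holim_j X_j[1]. \]
Because $\Hom(C,\prod_j X_j[n])=\prod_j \Hom(C[-n],X_j)$, the associated long exact sequence reads
\[ \prod_j \Hom(C[1],X_j) \xrightarrow{1-s_*} \prod_j \Hom(C[1],X_j) \arrow \Hom(C,\holim_j X_j) \arrow \prod_j \Hom(C,X_j) \xrightarrow{1-s_*} \prod_j \Hom(C,X_j), \]
so it will be enough to prove that $1-s_*$ is injective on $\prod_j \Hom(C,X_j)$ and surjective on $\prod_j \Hom(C[1],X_j)$.

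The real content is the vanishing $\Hom(C[c],X_j)=0$ for each fixed $c\in\Z$ and all $j$ sufficiently large. Lemma \ref{inclusion} gives $X_j\in D_{a_j}\subset E_{a_j}^{\perp}$, while $C[c]=M(Y)(a)[b+c]$ is by definition a generator of $E_{a-1}$. The subcategory $E_j$ depends antimonotonically on $j$, so $a_j<a$ gives $E_{a-1}\subset E_{a_j}$ and hence $\Hom(C[c],X_j)=0$. Since $a_j\arrow -\infty$, this vanishing holds for $j$ past some index $J$ (depending on $a$ and $c$).

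Write $f_j\colon X_{j+1}\arrow X_j$ for the structure maps, so $(1-s_*)$ sends $(\phi_j)_j$ to $(\phi_j - f_{j*}\phi_{j+1})_j$. For injectivity on $\prod_j \Hom(C,X_j)$: the relations $\phi_j=f_{j*}\phi_{j+1}$ together with $\phi_{J+1}=0$ force $\phi_J=0$, then $\phi_{J-1}=0$, and so on by downward induction. For surjectivity on $\prod_j \Hom(C[1],X_j)$: given a target $(\psi_j)_j$ which is also eventually zero, set $\phi_j=0$ for $j>J$ and $\phi_j=\psi_j+f_{j*}\phi_{j+1}$ for $j\leq J$ to produce a preimage. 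Applying this with $c=0$ and $c=1$ yields $\Hom(C,\holim_j X_j)=0$ and completes the proof.

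The main obstacle is purely bookkeeping: one must be careful that $E_j$ grows as $j$ decreases, so that $C\in E_{a-1}\subset E_{a_j}$ exactly when $a_j<a$, and that the defining exact sequence for $\holim$ reduces to a finite-term back-substitution once $\Hom(C[c],X_j)=0$ for large $j$. No further input beyond Lemmas \ref{inclusion}, \ref{smoothgens}, \ref{gens}, and the definition of $\holim$ is needed.
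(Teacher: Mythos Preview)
Your proof is correct. The key input, that $\Hom(M(Y)(a)[b],X_j)=0$ once $a_j<a$, is exactly Lemma \ref{inclusion}, and your back-substitution argument for the injectivity and surjectivity of $1-s_*$ on the eventually-zero products is the standard Milnor $\lim{}^1$ computation, correctly carried out.

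The paper takes a more packaged route. Rather than computing $\Hom(C,\holim_j X_j)$ directly via the long exact sequence, it argues that for each fixed $m$ one may discard finitely many terms so that all remaining $X_j$ lie in $E_m^{\perp}$; since $E_m^{\perp}$ is closed under products in $DM(k;R)$ (Lemma \ref{perpcoloc}) and is triangulated, the homotopy limit lies in $E_m^{\perp}$ as well. Letting $m$ vary and invoking $\bigcap_m E_m^{\perp}=0$ (Lemma \ref{separating}) finishes the proof. Your argument is essentially the unwound version of this: your vanishing $\Hom(C,X_j)=0$ for large $j$ is the statement that eventually $X_j\in E_{a-1}^{\perp}$, and your explicit inverse for $1-s_*$ is what underlies the fact that a colocalizing subcategory is closed under homotopy limits. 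The paper's approach is shorter and reuses lemmas it needs elsewhere; yours is more elementary and avoids citing Lemma \ref{perpcoloc} and the cofinality of truncated inverse systems, at the cost of a small explicit computation.
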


\begin{proof}
We will show that the homotopy limit $X=\holim_j X_j$ belongs
to $E_m^{\perp}$ for every integer $m$, and hence is zero
by Lemma \ref{separating}. The homotopy limit does not
change if finitely many objects are removed from the inverse system.
So it suffices to show that if $X_j$ is in $D_{a_j}$ 
with $a_j\leq m$ for all $j$, then the homotopy limit $X$ is in $E_m^{\perp}$.
This is true because $D_m$ is contained in $E_m^{\perp}$ and
the triangulated subcategory $E_m^{\perp}$ is closed under arbitrary products
in $DM(k;R)$ (Lemma \ref{perpcoloc}).
\end{proof}

\begin{theorem}
\label{motstack}
The compactly supported motive in $DM(k;R)$
of a quotient stack over a field $k$ is an invariant
of the stack over $k$.
\end{theorem}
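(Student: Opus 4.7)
The plan is a doubling trick: given two systems $(V_i, S_i)$ and $(V'_i, S'_i)$ of approximating data for the quotient stack $X$ (with ranks $n_i, n'_i$), I would construct a third system that dominates both, and show that the comparison maps induce isomorphisms on homotopy limits using Lemma \ref{inclusion} and the Milnor $\lim$/$\lim^1$ description of $\Hom$ into a $\holim$. After passing to cofinal subsequences I may assume both systems are indexed by the same $\N$.

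For each $i$, set $W_i = V_i \oplus V'_i$ (Whitney sum of vector bundles over $X$), let $p_i\colon W_i\to V_i$ and $q_i\colon W_i\to V'_i$ be the projections, and put $T_i = p_i^{-1}(S_i)\cup q_i^{-1}(S'_i)$. The open complement $W_i - T_i = (V_i-S_i)\times_X(V'_i-S'_i)$ is a separated scheme, since it is open in the pulled-back vector bundle $p_i^*V'_i = (V_i-S_i)\times_X V'_i$ over the scheme $V_i-S_i$. The pair $(W_i,T_i)$ is a valid approximating system: the transitions are surjective, $T_{i+1}$ lies in the preimage of $T_i$, and $\codim(T_i,W_i) = \min(\codim(S_i,V_i),\codim(S'_i,V'_i))\to\infty$.

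To compare the first system with the doubled one, homotopy invariance for the rank-$n'_i$ vector bundle $p_i^*V'_i\to V_i-S_i$ and flat pullback along the open inclusion $W_i-T_i\hookrightarrow p_i^*V'_i$ combine to give, for each $i$, a morphism
$$
\phi_i\colon M^c(V_i-S_i)(-n_i)[-2n_i] \longrightarrow M^c(W_i-T_i)(-n_i-n'_i)[-2n_i-2n'_i]
$$
which, by the localization triangle for $p_i^*S'_i \subset p_i^*V'_i$, fits (up to a shift) into a distinguished triangle with third vertex $C_i := M^c(p_i^*S'_i)(-n_i-n'_i)[-2n_i-2n'_i]$, where $p_i^*S'_i = (V_i-S_i)\times_X S'_i$. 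A dimension count gives $\dim(p_i^*S'_i) = \dim X + n_i + n'_i - \codim(S'_i,V'_i)$, so $C_i$ lies in the subcategory $D_{d_i}$ with $d_i = \dim X - \codim(S'_i,V'_i)$, and $d_i \to -\infty$.

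To conclude that the induced map $\phi\colon \holim_i M^c(V_i-S_i)(-n_i)[-2n_i]\to \holim_i M^c(W_i-T_i)(-n_i-n'_i)[-2n_i-2n'_i]$ is an isomorphism, I would argue compact object by compact object. By Lemma \ref{smoothgens} it suffices to show $\Hom(P[b],\phi)$ is an isomorphism for every $P = M(Y)(a)$ with $Y$ smooth projective over $k$ and $a,b\in\Z$. Applying $\Hom(P[b],-)$ to the B\"okstedt--Neeman presentation $\holim\to \prod\to\prod$ yields the Milnor short exact sequence
$$
0\to {\lim_i}\!^1 \Hom(P[b-1],X_i)\to \Hom(P[b],\holim_i X_i)\to \lim_i\Hom(P[b],X_i)\to 0.
$$
Now $C_i\in D_{d_i}\subset E_{d_i}^\perp$ by Lemma \ref{inclusion}; for $d_i<a$ this forces $\Hom(P[c],C_i)=0$ for all $c$, so $\Hom(P[b],\phi_i)$ and $\Hom(P[b-1],\phi_i)$ are isomorphisms for $i$ large. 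The Milnor sequences for the two homotopy limits then make $\Hom(P[b],\phi)$ an isomorphism, hence $\phi$ is an isomorphism. By the symmetric argument with the roles of $V$ and $V'$ swapped, the homotopy limit for $(V'_i,S'_i)$ is also isomorphic to that for the doubled system, yielding the desired identification.

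The main obstacle is verifying that the $\phi_i$ assemble into a morphism of inverse systems, i.e.\ that they commute with the transition maps constructed via flat pullback and homotopy invariance on both sides; this is essentially bookkeeping with the functoriality of flat pullback under the surjections $V_{i+1}\surj V_i$ and $V'_{i+1}\surj V'_i$. Once this is in place, the Milnor-sequence argument above bypasses any need to know that $\holim$ in the bare triangulated category preserves distinguished triangles of sequences.
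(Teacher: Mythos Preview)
Your proposal is correct and follows essentially the same doubling trick as the paper: form $V_i\oplus V'_i$ with the union of the pulled-back bad loci, and compare each original system to the doubled one via homotopy invariance and flat pullback. The only difference is cosmetic: where the paper identifies the cone of the comparison map as a homotopy limit of motives in $D_{d_i}$ with $d_i\to-\infty$ and invokes Corollary~\ref{limitzero}, you instead test against the compact generators $M(Y)(a)$ via the Milnor $\lim/\lim^1$ sequence---which is precisely the content of Corollary~\ref{limitzero} unpacked, and has the virtue of sidestepping any appeal to $\holim$ preserving triangles.
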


\begin{proof}
Let $X$ be a quotient stack over $k$.
Let $\cdots\surj V_2\surj V_1$ and $\cdots\surj W_2\surj W_1$
be two sequences of vector bundles over $X$, viewed as stacks
over $k$, with closed substacks $S_j\subset V_j$
and $T_j\subset W_j$ such that $V_j-S_j$ and $W_j-T_j$
are schemes, $S_{j+1}$ is contained in the inverse image
of $S_j$ under $V_{j+1}\surj V_j$ and likewise for $T_{j+1}$,
and the codimensions of $S_j\subset V_j$
and $T_j\subset W_j$ go to infinity. Let $m_j$ be the rank
of the bundle $V_j$ over $X$ and $n_j$ the rank of $W_j$.
We want to define
a canonical isomorphism from the motive 
$X_V:=\holim_j M^c(V_j-S_j)(-m_j)[-2m_j]$
to $X_W:=\holim_j M^c(W_j-T_j)(-n_j)[-2n_j]$.

Consider the sequence of vector bundles $V_j\oplus W_j$
over $X$, viewed as stacks over $k$. (These stacks are the fiber
products $V_j\times_X W_j$.) Let $Z_j$ be the union
of $S_j\times_X W_j$ and $V_j\times_X T_j$ inside $V_j\oplus W_j$.
Then we have flat morphisms of schemes from $(V_j\oplus W_j)-Z_j$
to $V_j-S_j$ and to $W_j-T_j$, for all $j$. So we have morphisms
from $X_V$ and from $X_W$ to
the homotopy limit $X_{VW}:=\holim_j M^c((V_j\oplus W_j)-Z_j)(-m_j-n_j)
[-2m_j-2n_j]$, as homotopy limits of flat pullback maps
of compactly supported motives.
It suffices to show that these morphisms
are both isomorphisms in $DM(k;R)$.

We will show that $X_V\arrow X_{VW}$ is an isomorphism;
the argument would be the same for $X_W$. The point is that
the morphism $(V_j\oplus W_j)-Z_j\arrow V_j-S_j$ is the complement
of the closed subset $V_j\times_X T_j$ in a vector bundle
(with fiber $W_j$) over the scheme $V_j-S_j$. The vector bundle
(of rank $n_j$) gives an isomorphism
$$M^c((V_j\oplus W_j)-(S_j\times_X W_j))(-m_j-n_j)[-2m_j-2n_j]
\cong M^c(V_j-S_j)(-m_j)[-2m_j].$$
Removing $T_j$ changes this motive by an object in the subcategory
$D_{-\codim(T_j\subset W_j)}$, by the localization triangle
for compactly supported motives (section \ref{motives}).
Therefore, the cone of the morphism
$X_V\arrow X_{VW}$ is a homotopy limit of motives in $D_{a_j}$
with $a_j$ approaching $-\infty$. By Corollary \ref{limitzero},
this cone is zero. That is, $X_V\arrow X_{VW}$ is an isomorphism,
as we want.
\end{proof}

Our definition of the compactly supported motive of a quotient
stack agrees with the standard definition in the special case
of a quasi-projective scheme.
As evidence that our definition is the right one for quotient stacks,
we prove a localization triangle.

\begin{theorem}
\label{localization}
Let $X$ be a quotient stack over a field $k$. Let $Y$ be a closed
substack of $X$. Then there is a distinguished triangle
$$M^c(Y)\arrow M^c(X)\arrow M^c(X-Y)$$
in $DM(k;R)$.
\end{theorem}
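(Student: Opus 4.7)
The plan is to write $X = A/G$ as in the definition of quotient stack, with $A$ a quasi-projective $k$-scheme carrying a $G$-equivariant ample line bundle; then the closed substack $Y$ corresponds to a $G$-invariant closed subscheme $B \subset A$, and $X - Y = (A-B)/G$. The idea is to compute all three compactly supported motives using a single system of representations of $G$, apply the scheme-level localization triangle at each level, and pass to the homotopy limit.

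Concretely, I would choose a sequence of surjections of $G$-representations $\cdots \surj V_2 \surj V_1$ with $G$-invariant closed subsets $T_j \subset V_j$ of codimension tending to infinity, such that $G$ acts freely on $V_j - T_j$ and $(A \times (V_j - T_j))/G$ is a quasi-projective scheme for every $j$. By Theorem \ref{motstack} and the definition of $M^c$ for a quotient stack, $M^c(X)$ is the homotopy limit of the motives $M^c((A\times(V_j - T_j))/G)(-m_j)[-2m_j]$, where $m_j$ is the rank of $V_j$. Pulling the same data back along $B \inj A$ gives the closed subscheme $(B\times(V_j - T_j))/G$ of the scheme $(A\times(V_j - T_j))/G$, and pulling back along $A - B \inj A$ gives its open complement $((A-B)\times(V_j - T_j))/G$; the codimension and scheme-hood conditions are inherited from the original data, so these two systems compute $M^c(Y)$ and $M^c(X - Y)$ respectively.

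For each $j$, the scheme-level localization triangle (recalled in section \ref{motives}) applied to this closed/open decomposition yields a distinguished triangle in $DM(k;R)$, and the surjections $V_{j+1} \surj V_j$ produce compatible morphisms between consecutive levels. Twisting everything by $(-m_j)[-2m_j]$ and arranging over all $j$, we obtain an inverse system of distinguished triangles whose term-wise homotopy limit is the putative triangle $M^c(Y) \arrow M^c(X) \arrow M^c(X - Y)$.

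The main obstacle is to verify that the homotopy limit of an inverse system of distinguished triangles in $DM(k;R)$ is itself distinguished. This is a formal consequence of the definition $\holim_j X_j = \cone(1-s\colon \prod_j X_j \arrow \prod_j X_j)[-1]$ together with the fact that arbitrary products of distinguished triangles are distinguished in $DM(k;R)$: writing the inverse system of triangles as $A_\bullet \arrow B_\bullet \arrow C_\bullet$, one regards $1-s$ as a morphism of distinguished triangles from $\prod A_j \arrow \prod B_j \arrow \prod C_j$ to itself, and applies the $3\times 3$-lemma (equivalently, the octahedral axiom) to conclude that the three fibers assemble into a distinguished triangle. Granted this, the level-wise construction above immediately produces the desired localization triangle.
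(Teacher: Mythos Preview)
Your proposal is correct and follows essentially the same approach as the paper: write $X=A/G$, pull back to get $G$-invariant closed and open subschemes of $A$, compute all three motives with a single system of representations, and take the homotopy limit of the level-wise scheme localization triangles. The paper's proof is terser and simply asserts that the desired triangle is this homotopy limit; your extra paragraph justifying that a homotopy limit of distinguished triangles is distinguished (via products of triangles and the $3\times 3$-lemma) fills in a step the paper leaves implicit.
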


\begin{proof}
Write $X$ as the quotient stack $A/G$
for a quasi-projective scheme $A$ over $k$ and
an affine group scheme $G$ of finite type over $k$ with
a $G$-equivariant ample line bundle on $A$.
Let $\cdots\surj V_2 \surj V_1$ be a sequence of representations
of $G$. Let $n_i=\dim(V_i)$. We can choose these representations
so that there are closed subschemes $S_i\subset V_i$
such that $G$ acts freely on $V_i-S_i$ with quotient
a quasi-projective scheme over $k$, $S_{i+1}$ is contained
in the inverse image of $S_i$ for all $i$, and
the codimension of $S_i$ in $V_i$ goes to infinity with $i$.
Let $A_Y$ and $A_{X-Y}$ be the inverse images of $Y$ and $X-Y$
in the scheme $A$.
Then the distinguished triangle we want is the homotopy limit
of the distinguished triangles
$$M^c((A_Y\times (V_i-S_i))/G)\arrow M^c((A\times (V_i-S_i))/G)
\arrow M^c(A_{X-Y}\times (V_i-S_i))/G).$$
\end{proof}

We now describe a basic example of the motive of a quotient stack,
$M^c(BG_m)$.

\begin{lemma}
\label{bgm}
The compactly supported motive of $BG_m$ in $DM(k;R)$ is isomorphic
to $\prod_{j\leq -1} R(j)[2j]$. This is isomorphic to the direct
sum $\oplus_{j\leq -1} R(j)[2j]$.
\end{lemma}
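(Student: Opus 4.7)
The plan is to compute $M^c(BG_m)$ directly from the definition via an explicit approximation, identify the transition maps in the resulting inverse system as coordinate projections of Tate motives, and then compare the homotopy limit with the direct sum (and product) by a Hom computation.

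First I would take $V_j = L^{\oplus j}$, where $L$ is the standard $1$-dimensional representation of $G_m$, with $V_{j+1} \surj V_j$ the projection off the last factor and $S_j \subset V_j$ the zero section; then $\codim(S_j \subset V_j) = j$, $(V_j - S_j)/G_m = \P^{j-1}$, and
$$M^c(BG_m) \cong \holim_j\; M^c(\P^{j-1})(-j)[-2j].$$
The closed complement of $V_{j+1} - f_j^{-1}(S_j)$ inside $V_{j+1} - S_{j+1}$ is a single $G_m$-orbit, descending to a point in $\P^j$, while $V_{j+1} - f_j^{-1}(S_j)$ itself is a rank-$1$ affine bundle over $V_j - S_j$. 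Combining the localization triangle with homotopy invariance, the transition map fits into a distinguished triangle
$$R(-j-1)[-2j-2] \arrow M^c(\P^j)(-j-1)[-2j-2] \arrow M^c(\P^{j-1})(-j)[-2j] \arrow R(-j-1)[-2j-1].$$
This is a Tate twist and shift of the standard projective-bundle triangle, which splits via $M(\P^j) \cong R \oplus M(\P^{j-1})(1)[2]$. Inductively, $X_j := M^c(\P^{j-1})(-j)[-2j] \cong \bigoplus_{m=-j}^{-1} R(m)[2m]$, and the transition $X_{j+1} \arrow X_j$ becomes the coordinate projection dropping the $R(-j-1)[-2j-2]$ summand.

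Using the compatible family consisting of the inclusion $R(m)[2m] \hookrightarrow X_j$ for $j \geq -m$ and the zero map for $j < -m$, I would define
$$\psi \colon \bigoplus_{m \leq -1} R(m)[2m] \arrow M^c(BG_m)$$
and show it is an isomorphism by applying $\Hom(M(Y)(a)[b], -)$ for $Y$ smooth projective. The left side equals $\bigoplus_m H^{2m-b}(Y, R(m-a))$ by compactness, while the right side fits into a Milnor short exact sequence from the defining triangle $\holim_j X_j \arrow \prod_j X_j \arrow \prod_j X_j$. The key point is that $H^p(Y, R(q)) = 0$ for $q < 0$ (since by Poincar\'e duality this is $H^M_{2\dim(Y)-p}(Y, R(\dim(Y)-q))$, and $\dim(Y) - q > \dim(Y)$ is excluded by the vanishing stated in Section \ref{motives}). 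For fixed $(a,b)$ this forces $m \geq a$, so combined with $m \leq -1$ only finitely many $m$ contribute; the inverse system of Hom groups stabilizes, $\varprojlim^1$ vanishes, and both sides agree. Lemma \ref{smoothgens} then makes $\psi$ an isomorphism. The same finiteness shows that the canonical map $\bigoplus_{m \leq -1} R(m)[2m] \arrow \prod_{m \leq -1} R(m)[2m]$ induces a bijection on $\Hom(M(Y)(a)[b], -)$ for every smooth projective $Y$ and all $(a,b)$, hence is itself an isomorphism in $DM(k;R)$, giving the second assertion of the lemma.

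The main obstacle is identifying the transition map in the second paragraph: one has to verify that, after passing to $G_m$-quotients and applying homotopy invariance for the affine bundle, the open-restriction map in the tower genuinely matches the split projection coming from the projective-bundle formula. Once this compatibility is pinned down, the rest is formal manipulation of $\holim$'s of Tate motives together with elementary Hom-vanishing in mixed Tate weights.
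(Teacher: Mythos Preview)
Your proof is correct. For the first assertion, your approach coincides with the paper's: both use the standard scalar representations to write $M^c(BG_m)$ as $\holim_j M^c(\P^{j-1})(-j)[-2j]$ and identify each term with a finite sum of Tate motives. You are simply more explicit than the paper in checking that the transition maps become coordinate projections.

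For the second assertion, your route genuinely differs from the paper's. The paper argues that the cone $N$ of $\oplus_{j\leq -1}R(j)[2j]\arrow \prod_{j\leq -1}R(j)[2j]$ lies in $E_a^{\perp}$ for every $a<0$ (using Lemma~\ref{perpcoloc}, which says $E_a^{\perp}$ is both localizing and colocalizing) and then invokes Lemma~\ref{separating} to conclude $N=0$. You instead test against the compact generators $M(Y)(a)[b]$ directly: the vanishing $H^p(Y,R(q))=0$ for $q<0$ forces only finitely many $m$ to contribute, the $\varprojlim^1$ term in the Milnor sequence vanishes because the tower stabilizes, and Lemma~\ref{smoothgens} finishes. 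Both arguments rest on the same vanishing (indeed, this vanishing is exactly what underlies Lemma~\ref{inclusion}, which feeds into $E_a^{\perp}$), but yours avoids the filtration machinery entirely and is self-contained, while the paper's argument showcases the $E_a^{\perp}$ framework that it needs elsewhere anyway (e.g.\ Corollary~\ref{limitzero}, Lemma~\ref{dimbound}). Your concern about pinning down the transition maps is legitimate but, as you note, resolves straightforwardly once the localization triangle is matched with the projective-bundle splitting.
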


\begin{proof}
By definition, using the representation of $G_m$ by scalars
on an $n$-dimensional vector space for any given $n$,
$M^c(BG_m)$ is the homotopy limit of the motives 
$$M^c(\P^{n-1})(-n)[-2n]\cong \prod_{j=-n}^{-1} R(j)[2j],$$
and so $M^c(BG_m)$
is isomorphic to the product $\prod_{j\leq -1} R(j)[2j]$.

To show that the morphism from the direct sum
$\oplus_{j\leq -1} R(j)[2j]$ to the product is an isomorphism,
it suffices to show that the cone $N$ of this morphism is zero.
For any integer $a<0$, $N$ is isomorphic to the cone
of the morphism $\oplus_{j\leq a} R(j)[2j]\arrow
\prod_{j\leq a} R(j)[2j]$, because finite direct sums are the same
as finite products. Because the category $E_a^{\perp}$ is both
localizing and colocalizing (Lemma \ref{perpcoloc}), both
$\oplus_{j\leq a} R(j)[2j]$ and $\prod_{j\leq -1} R(j)[2j]$
are in $E_a^{\perp}$.
So $N$ is in $E_a^{\perp}$. Since this holds for all negative
integers $a$, $N$ is zero by Lemma \ref{separating}, as we want.
\end{proof}

\begin{lemma}
\label{dimbound}
Let $X$ be a quotient stack over a field $k$.
Then the motive $M^c(X)$ is in the subcategory $(E_{\dim(X)})^{\perp}$. 
That is, for
every smooth projective variety $Y$ over $k$,
$$\Hom_{DM(k;R)}(M(Y)(a)[b],M^c(X))=0$$
for all integers $a$ and $b$ with $a>\dim(X)$.
\end{lemma}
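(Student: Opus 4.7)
The plan is to reduce the lemma to the dimension estimate for ordinary quasi-projective schemes, and then transfer it to the homotopy limit using the closure properties of $E_{\dim(X)}^{\perp}$ established in Lemmas \ref{inclusion} and \ref{perpcoloc}.

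First, I would write $X = A/G$ with $A$ quasi-projective and $G$ an affine group scheme of finite type, carrying a $G$-equivariant ample line bundle, so that $\dim(X) = \dim(A) - \dim(G)$. Following the definition of $M^c(X)$, choose a sequence $V_1 \twoheadleftarrow V_2 \twoheadleftarrow \cdots$ coming from $G$-representations, with closed subsets $S_i \subset V_i$ such that $G$ acts freely on $V_i - S_i$ with quasi-projective quotient, and with $\codim(S_i \subset V_i) \to \infty$. Set $n_i = \dim V_i$. Then $(V_i - S_i)/\text{(stack on top)} = (A \times (V_i - S_i))/G$ is a scheme of dimension $\dim(A) + n_i - \dim(G)$.

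Next, I would compute dimensions: the twisted motive
\[
M^c\bigl((A \times (V_i - S_i))/G\bigr)(-n_i)[-2n_i]
\]
lies in the subcategory $D_{\dim(A) + n_i - \dim(G) - n_i} = D_{\dim(X)}$ by definition of $D_j$. By Lemma \ref{inclusion}, each of these motives therefore lies in $E_{\dim(X)}^{\perp}$.

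Finally, I would invoke Lemma \ref{perpcoloc}: the subcategory $E_{\dim(X)}^{\perp}$ is both localizing and colocalizing, hence triangulated and closed under arbitrary products in $DM(k;R)$. Since $M^c(X)$ is defined as $\cone(1-s)[-1]$ for an endomorphism of the product $\prod_i M^c((A \times (V_i-S_i))/G)(-n_i)[-2n_i]$, and each factor sits inside $E_{\dim(X)}^{\perp}$, the product does too, and so does the shifted cone. Thus $M^c(X) \in E_{\dim(X)}^{\perp}$. Unpacking the definition of $E_{\dim(X)}$ yields the claimed vanishing $\Hom(M(Y)(a)[b], M^c(X)) = 0$ whenever $Y$ is smooth projective and $a > \dim(X)$.

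I do not expect a serious obstacle: the only delicate point is the bookkeeping that the Tate twist $(-n_i)[-2n_i]$ exactly cancels the $n_i$ growth of the scheme dimension so that all terms land in the same $D_{\dim(X)}$ (independent of $i$); once this is in place, Lemmas \ref{inclusion} and \ref{perpcoloc} do all the work.
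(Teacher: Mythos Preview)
Your proposal is correct and follows essentially the same route as the paper: show each term $M^c((A\times(V_i-S_i))/G)(-n_i)[-2n_i]$ of the defining homotopy limit lies in $E_{\dim(X)}^{\perp}$, then use that $E_{\dim(X)}^{\perp}$ is closed under products and cones. The only cosmetic difference is that the paper re-derives the vanishing $\Hom(M(Y)(a)[b],M^c(Z))=0$ for $a>\dim(Z)$ directly (essentially repeating the proof of Lemma \ref{inclusion}) rather than citing Lemma \ref{inclusion} as you do; your packaging via Lemmas \ref{inclusion} and \ref{perpcoloc} is slightly cleaner.
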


For a quotient stack $X$,
one might ask whether $M^c(X)$ is always
in the subcategory
$D_{\dim(X)}$. For example, that is true for $M^c(BG_m)=\prod_{j\leq -1}
\Z(j)[2j]$, because that is isomorphic to $\oplus_{j\leq -1}\Z(j)[2j]$
by Lemma \ref{bgm},
and that direct sum
is in $D_{-1}$. It would be clear that the compactly supported
motive of a quotient stack $X$ was in $D_{\dim(X)}$ if the categories
$D_m$ were closed under arbitrary products in $DM(k;R)$, but I suspect
that they are not.

\begin{proof}
As mentioned in section \ref{motives}, for any separated scheme
$Z$ of finite type over $k$, we have $H_j(Z,R(a))=0$
for all integers $a$ and $j$ with $a>\dim(Z)$. Let $Y$
be a smooth projective variety over $k$, and let $n=\dim(Y)$.
Then we have $H_j(Y\times Z, R(a))=0$ for all integers $a$
and $j$ with $a>n+\dim(Z)$. Equivalently,
$\Hom_{DM(k;R)}(R(a)[b],
\linebreak[0]
M(Y)\otimes M^c(Z))=0$ for all integers $a$ and $b$
with $a>n+\dim(Z)$.

As mentioned in section \ref{motives}, we have
$$M(Y)^*\cong M(Y)(-n)[-2n].$$
So $\Hom_{DM(k;R)}(R(a)[b],
\linebreak[0]
M(Y)^*\otimes M^c(Z))=0$
for all integers $a$ and $b$ with $a>\dim(Z)$. By the results of Voevodsky
and Kelly (see the proof of Lemma \ref{inthom}),
it follows that $\Hom_{DM(k;R)}(M(Y)(a)[b],
\linebreak[0]
M^c(Z))=0$
for all integers $a$ and $b$ with $a>\dim(Z)$.

Let $X$ be a quotient stack over $k$.
By definition, $M^c(X)$ is a homotopy limit
of motives $M^c(V-S)(-m)[-2m]$, where $V$ is a vector 
bundle over $X$ (viewed as a stack over $k$) and $V-S$
is an open subscheme. The scheme
$V-S$ has dimension $\dim(X)+m$.
So the previous paragraph gives that 
$$\Hom_{DM(k;R)}
(M(Y)(a)[b],M^c(V-S)(-m)[-2m])=0$$
for all integers $a$ and $b$ with $a>\dim(X)$. By definition of
$M^c(X)$ as a 
homotopy limit, it follows that
$$\Hom_{DM(k;R)}(M(Y)(a)[b],M^c(X))=0$$
for all integers $a$ and $b$ with $a>\dim(X)$. That is,
$M^c(X)$ is in the subcategory $(E_{\dim(X)})^{\perp}$.
\end{proof}

\begin{lemma}
\label{coloc}
Let $X$ be a motive in the subcategory $E_m^{\perp}$ of $DM(k;R)$
for an integer $m$.
Then the colocalization $C(X)$ with respect to Tate motives
is in the subcategory $D_m$, and hence in $E_m^{\perp}$.
\end{lemma}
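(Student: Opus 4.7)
The plan is to exploit the explicit construction of the colocalization $C(X)$ as a homotopy colimit of motives $C_0\arrow C_1\arrow\cdots$ built from Tate motives, described just before Lemma \ref{colocsum}, and to verify that every Tate summand $R(a)[b]$ appearing in the construction satisfies $a\leq m$. Taking $Y=\Spec k$ in the definition of $E_m^{\perp}$, the hypothesis on $X$ gives $\Hom(R(a)[b],X)=0$ whenever $a>m$. So we may choose $C_0$ to be a direct sum of Tate motives $R(a)[b]$ with $a\leq m$; each such summand equals $M^c(\Spec k)(a)[b]$ with $\dim(\Spec k)+a=a\leq m$, and hence lies in $D_m$. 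Since $D_m$ is localizing, $C_0\in D_m$.

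Next I would prove by induction on $i$ that each $C_i$ lies in $D_m$ and that $\Hom(R(a)[b],C_i)=0$ for all $a>m$ and all $b$. For the $\Hom$ vanishing, compactness of $R(a)[b]$ reduces the computation to $\Hom(R(a)[b],R(a')[b'])\cong H^{b-b'}(k,R(a'-a))$ with $a'\leq m<a$, and this motivic cohomology group vanishes because $a'-a<0$ and the higher Chow groups of $\Spec k$ contain no cycles of negative codimension. Consequently, the kernel of $H_*(C_i,R(*))\arrow H_*(X,R(*))$ is also concentrated in bidegrees $(a,b)$ with $a\leq m$, so the next layer $S_{i+1}$ can be chosen as a direct sum of $R(a)[b]$ with $a\leq m$, and $C_{i+1}=\cone(S_{i+1}\arrow C_i)$ again lies in $D_m$.

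Finally, $C(X)\cong\hocolim_i C_i$ is by definition the cone of an endomorphism of $\oplus_i C_i$; since each $C_i$ belongs to $D_m$ and $D_m$ is localizing (closed under arbitrary direct sums and cones), it follows that $C(X)\in D_m$. The inclusion $D_m\subset E_m^{\perp}$ from Lemma \ref{inclusion} then yields the second claim. The main point to watch is the inductive bookkeeping ensuring the weight bound $a\leq m$ persists at every stage; this rests on compactness of Tate motives together with the elementary vanishing $H^j(k,R(n))=0$ for $n<0$, rather than on any deeper property of $X$.
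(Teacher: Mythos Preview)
Your proof is correct and follows essentially the same route as the paper: build $C(X)$ as the explicit homotopy colimit of the $C_i$, use $X\in E_m^{\perp}$ to bound the Tate weights at each stage by $a\leq m$, and conclude $C(X)\in D_m\subset E_m^{\perp}$ since $D_m$ is localizing. The only cosmetic difference is that the paper deduces the inductive vanishing $H_b(C_i,R(a))=0$ for $a>m$ from the inclusion $D_m\subset E_m^{\perp}$ (Lemma \ref{inclusion}), whereas you compute it directly from $H^j(k,R(n))=0$ for $n<0$; both arguments work.
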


\begin{proof}
We use the construction of $C(X)$ 
from section \ref{kunnethsect} as a homotopy colimit
$\hocolim_j\, C_j$. Since $X$ is in $E_m^{\perp}$, we have
$H_b(X,R(a))=0$ for all integers $a$ and $b$ with $a>m$.
So we can take the motive $C_0$ in the construction of $C(X)$
to be a direct sum of motives $R(a)[b]$ with $a\leq m$.
Then $C_0$ is in $D_m$. So $H_b(C_0,R(a))=0$
for all integers $a$ and $b$ with $a>m$. By induction, we can choose
$C_j$ for all natural numbers $j$ to be in $D_m$. So
$C(X)=\hocolim_j\, C_j$ is in $D_m$. By Lemma \ref{inclusion},
$C(X)$ is also in $E_m^{\perp}$.
\end{proof}

Define a motive $A$ in $DM(k;R)$ to be {\it mixed Tate
modulo dimension $m$ }if the cone of the morphism $C(A)\arrow A$
is in $E_m^{\perp}$. Also, define a quotient stack $X$ to be {\it mixed Tate
modulo codimension $r$ }if $M^c(X)$ is mixed Tate modulo dimension
$\dim(X)-r$.

\begin{lemma}
\label{Tatemodulo}
All mixed Tate motives and all motives in $E_m^{\perp}$ are
mixed Tate
modulo dimension $m$. Also, the motives that are mixed Tate modulo
dimension $m$ form a triangulated subcategory of $DM(k;R)$.
\end{lemma}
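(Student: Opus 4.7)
The plan is to verify the three assertions of the lemma in turn, using the colocalization construction and the fact that both $E_m^\perp$ (Lemma \ref{perpcoloc}) and $DMT(k;R)$ are triangulated subcategories of $DM(k;R)$.

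First I would handle the two easy classes. If $A$ is mixed Tate, then the counit $C(A) \to A$ is an isomorphism (since $C$ is the right adjoint to the inclusion $DMT(k;R) \hookrightarrow DM(k;R)$), so the cone is zero and hence trivially in $E_m^\perp$. If instead $A$ belongs to $E_m^\perp$, then by Lemma \ref{coloc} the colocalization $C(A)$ also belongs to $E_m^\perp$; since $E_m^\perp$ is triangulated (Lemma \ref{perpcoloc}), the cone of $C(A)\arrow A$ is again in $E_m^\perp$, proving the second assertion.

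For the claim that the motives which are mixed Tate modulo dimension $m$ form a triangulated subcategory, closure under the shift $[1]$ is immediate because $C$ commutes with $[1]$ and $E_m^\perp$ is shift-stable. For closure under extensions, consider a distinguished triangle $A\arrow B\arrow D \arrow A[1]$ with $A$ and $B$ mixed Tate modulo dimension $m$. Since $C\colon DM(k;R)\arrow DMT(k;R)$ is an exact (triangulated) functor between triangulated categories (being a right adjoint to an exact inclusion), we obtain an induced distinguished triangle $C(A)\arrow C(B)\arrow C(D)$ together with a morphism of triangles to $A\arrow B\arrow D$. Applying the $3\times 3$ lemma (or the octahedral axiom) produces a distinguished triangle $\cone(C(A)\arrow A)\arrow \cone(C(B)\arrow B)\arrow \cone(C(D)\arrow D)$ relating the three cones. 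The first two terms lie in $E_m^\perp$ by assumption, so the third does as well, showing that $D$ is mixed Tate modulo dimension $m$.

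The main subtlety is justifying that $C$ is an exact functor so that one gets a genuine distinguished triangle $C(A)\arrow C(B)\arrow C(D)$; this is standard for right adjoints of triangulated functors between triangulated categories with enough structure (as is the case for our Bousfield colocalization, by Neeman). Once this is granted, the rest is purely formal from Lemmas \ref{perpcoloc} and \ref{coloc}, and no further input about the filtration $E_m$ or the specific structure of $DM(k;R)$ is needed.
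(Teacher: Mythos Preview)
Your proposal is correct and follows essentially the same approach as the paper: both arguments dispatch the mixed Tate and $E_m^\perp$ cases directly (the latter via Lemma \ref{coloc}), and for closure under triangles both use the morphism of distinguished triangles $C(A)\to C(B)\to C(D)$ over $A\to B\to D$ together with the octahedral axiom (equivalently, the $3\times 3$ lemma) to deduce that the third cone lies in $E_m^\perp$. You are slightly more explicit than the paper in noting closure under shifts and in flagging the exactness of $C$, but the substance is identical.
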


\begin{proof}
It is clear that a mixed Tate motive is mixed Tate modulo dimension $m$.
Also, a motive in $E_m^{\perp}$ is mixed Tate modulo dimension $m$,
by Lemma \ref{coloc}. It remains to show that for a distinguished
triangle $X\arrow Y\arrow Z$ in $DM(k;R)$ with $X$ and $Y$ mixed Tate
modulo dimension $m$, $Z$ is also mixed Tate modulo dimension $m$.
We have a morphism of distinguished triangles:
$$\xymatrix@C+5pt@R-10pt{
C(X)\ar[r]\ar[d] & C(Y)\ar[r]\ar[d] & C(Z)\ar[d]\\
X\ar[r] & Y\ar[r] & Z.
}$$
By the octahedral axiom for triangulated categories,
the cone of $C(Z)\arrow Z$ is the cone of a morphism
$\cone(C(X)\arrow X)\arrow \cone(C(Y)\arrow Y)$. The latter two
cones are in $E_m^{\perp}$, and so the cone of $C(Z)\arrow Z$
is also in $E_m^{\perp}$. That is, $Z$ is mixed Tate modulo
dimension $m$.
\end{proof}

\begin{corollary}
\label{approx}
Let $X$ be a motive in $DM(k;R)$ which can be approximated
by mixed Tate motives in the sense that $X$ is mixed Tate modulo
dimension $j$ for every integer $j$.
Then $X$ is a mixed Tate motive.
\end{corollary}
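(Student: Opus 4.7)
The plan is to unwind the definition of ``mixed Tate modulo dimension $j$'' and then apply Lemma \ref{separating}. Recall that $X$ is mixed Tate modulo dimension $j$ precisely when the cone $N_j := \cone(C(X) \arrow X)$ lies in the subcategory $E_j^{\perp}$. Crucially, this cone $N_j$ does not actually depend on $j$: the colocalization $C(X)$ with respect to the Tate motives $R(a)$ is a functorial construction on $X$, determined up to unique isomorphism by the property that $C(X) \arrow X$ induces isomorphisms on all motivic homology groups. So we have a single motive $N = \cone(C(X) \arrow X)$, and the hypothesis says $N \in E_j^{\perp}$ for every integer $j$.

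Next I would invoke Lemma \ref{separating}, which asserts that $\cap_j E_j^{\perp} = 0$. This immediately gives $N = 0$, so the morphism $C(X) \arrow X$ is an isomorphism in $DM(k;R)$. Since $C(X)$ is by construction a mixed Tate motive (it is built as a homotopy colimit of direct sums of shifted Tate motives $R(a)[b]$, all of which lie in the localizing subcategory $DMT(k;R)$), it follows that $X$ itself is a mixed Tate motive, proving the corollary.

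There is essentially no obstacle here: both of the ingredients, the description of ``mixed Tate modulo dimension $m$'' in terms of the cone of $C(X) \arrow X$ and the separating property $\cap_j E_j^{\perp} = 0$, have already been set up. The only thing to be careful about is to note that $C(X)$ is a mixed Tate motive so that the vanishing of the cone lets one conclude the mixed Tate property for $X$ itself, rather than something weaker.
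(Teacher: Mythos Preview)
Your proof is correct and follows exactly the paper's approach: the cone of $C(X)\arrow X$ lies in $E_j^{\perp}$ for every $j$, hence is zero by Lemma~\ref{separating}, so $X\cong C(X)$ is mixed Tate. You have simply spelled out in more detail what the paper compresses into one sentence.
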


\begin{proof}
The cone of $C(X)\arrow X$ is in $E_j^{\perp}$ for every integer $j$,
and hence is zero by Lemma \ref{separating}.
\end{proof}

Given more geometric information on a motive $N$, the following results
give better criteria for when $N$ is mixed Tate.

\begin{lemma}
\label{minusone}
Let $X$ be a separated scheme of finite type over $k$.
If $X$ is mixed Tate modulo dimension $-1$, then $X$ is mixed Tate.
\end{lemma}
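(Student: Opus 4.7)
My plan is to exhibit $N:=\cone(C(M^c(X))\to M^c(X))$ as a summand of a mixed Tate motive with zero motivic homology, at which point Corollary \ref{mixed} forces $N=0$ and hence $M^c(X)\cong C(M^c(X))$ is mixed Tate. The approach splits into a geometric input about where $M^c(X)$ sits inside $DM(k;R)$, followed by a short triangulated-category argument.

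Let $T_0\subseteq DM(k;R)$ denote the thick subcategory generated by the motives $M(Y)=M^c(Y)$ for $Y$ a smooth projective variety over $k$ (with shifts but no Tate twists). The main step is to prove, by induction on $n=\dim(X)$, that $M^c(X)\in T_0$ for every separated $k$-scheme of finite type. After reducing to $k$ perfect via Theorem \ref{perfect}, I would choose a Nagata compactification $X\hookrightarrow\overline X$ with $\overline X-X$ of dimension less than $n$ and apply the localization triangle $M^c(\overline X-X)\to M(\overline X)\to M^c(X)$; by the inductive hypothesis this reduces to showing $M(\overline X)\in T_0$ for the proper $\overline X$. For the latter, one uses classical resolution of singularities in characteristic zero, or Gabber's alterations in general (the blanket assumption that the exponential characteristic of $k$ is invertible in $R$ is what makes alterations available in $DM(k;R)$). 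The resulting blow-up/cdh distinguished triangle then expresses $M(\overline X)$ in the thick subcategory generated by $M(\widetilde X)$ for a smooth projective alteration $\widetilde X\to\overline X$, together with motives of proper schemes of strictly smaller dimension, both of which lie in $T_0$.

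Given $M^c(X)\in T_0$, the rest of the proof is short. The hypothesis $N\in E_{-1}^{\perp}$ gives $\Hom_{DM(k;R)}(M(Y)(a)[b],N)=0$ for every smooth projective $Y$ and every $a\geq 0$; specializing $a=0$ and letting $b$ range over $\Z$ yields $\Hom(M(Y)[b],N)=0$ for all such $Y$ and all $b$. Closure of ``$\Hom(-,N)=0$'' under cones, shifts, and direct summands then propagates this vanishing throughout $T_0$, giving $\Hom(M^c(X),N)=0$. Hence the morphism $M^c(X)\to N$ in the defining distinguished triangle
\[
C(M^c(X))\to M^c(X)\to N\to C(M^c(X))[1]
\]
is zero, and the standard fact that a distinguished triangle containing a zero morphism splits yields $C(M^c(X))\cong M^c(X)\oplus N[-1]$. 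Thus $N[-1]$ is a direct summand of the mixed Tate motive $C(M^c(X))$, hence is itself mixed Tate; so $N$ is mixed Tate. Since $N$ has zero motivic homology by construction as the cone of a colocalization, Corollary \ref{mixed} forces $N=0$.

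The main obstacle is the geometric input $M^c(X)\in T_0$: one must carefully combine Nagata compactification with resolution of singularities or Gabber's alterations, and verify that the blow-up/cdh distinguished triangles realize $M(\overline X)$ inside the thick subcategory generated by smooth projective motives and lower-dimensional ingredients, using only tools available in $DM(k;R)$ under our standing hypothesis on $R$.
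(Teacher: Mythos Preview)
Your argument is correct and follows the same core idea as the paper: show that the morphism $M^c(X)\to N$ vanishes by placing $M^c(X)$ in a subcategory whose maps to $E_{-1}^{\perp}$ are zero, then split the defining triangle. The paper's version is shorter in two places. First, instead of proving by induction on $\dim X$ (via compactification and resolution/alterations) that $M^c(X)$ lies in your thick subcategory $T_0$, the paper simply cites Kelly's result that $M^c(X)$ is effective, i.e.\ lies in the localizing subcategory $E_{-1}$ generated by $M(Y)(a)$ with $a\ge 0$; since $N\in E_{-1}^{\perp}$ by hypothesis, this already gives $\Hom(M^c(X),N)=0$ with no further work. Your $T_0$ is contained in $E_{-1}$, so you are proving a stronger containment than is needed. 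Second, once the triangle splits as $C(M^c(X))\cong M^c(X)\oplus N[-1]$, the paper reads off directly that $M^c(X)$ is a summand of the mixed Tate motive $C(M^c(X))$ and hence is itself mixed Tate; your detour through ``$N$ is mixed Tate with trivial motivic homology, hence $N=0$ by Corollary~\ref{mixed}'' is correct but superfluous.
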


\begin{proof}
The motive $M^c(X)$ is in the subcategory $E_{-1}$
of effective motives in $DM(k;R)$, under our assumption
on $R$ \cite[Proposition 5.5.5]{Kelly}. Let $W$ be the cone
of the morphism $C(M^c(X))\arrow M^c(X)$.
Our assumption that $X$ is mixed Tate modulo
dimension $-1$ means that $W$ is in $E_{-1}^{\perp}$.
So the morphism $M^c(X)\arrow W$ is zero. It follows
that $M^c(X)$ is a summand of the mixed Tate motive $C(M^c(X))$.
So $M^c(X)$ is a mixed Tate motive.
\end{proof}

There is a ``finite-dimensional'' criterion for when a quotient stack
is mixed Tate, Corollary \ref{finitedim}.
Namely, a quotient stack $X=Y/G$ over $k$ is mixed Tate
(meaning that $M^c(X)$ is mixed Tate in $DM(k;R)$) if and only if
the scheme $(Y\times GL(n))/G$ is mixed Tate, for one or any
faithful representation $G\inj GL(n)$ over $k$.

Here is the main step in proving that.

\begin{lemma}
\label{glnmodulo}
Let $X$ be a quotient stack over a field $k$.
Let $E$ be a principal $GL(n)$-bundle over $X$ for some $n$, viewed
as a stack over $k$. Let $r$ be an integer.
Then $X$
is mixed Tate modulo codimension $r$ (in $DM(k;R)$) if and only if
$E$ is mixed Tate modulo codimension $r$.
\end{lemma}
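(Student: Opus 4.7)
The plan is to reduce to the analogous statement for principal $GL(n)$-bundles of schemes and then analyze such bundles via an explicit filtration by ``partial frame'' schemes.

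First I would use Theorem \ref{motstack} to pick vector bundle approximations $V_i$ of the quotient stack $X$ with closed substacks $S_i\subset V_i$ such that $U_i:=V_i-S_i$ is a scheme and $\codim(S_i\subset V_i)\to\infty$. Pulling back along $E\arrow X$ yields vector bundles $V_i\times_X E$ on $E$ of the same rank $m_i$, with open complements $\tilde U_i:=E\times_X U_i$. Since $U_i$ is a scheme and principal $GL(n)$-bundles over schemes are Zariski locally trivial, $\tilde U_i$ is itself a scheme and a principal $GL(n)$-bundle over $U_i$. The localization triangles from Theorem \ref{localization}, combined with Lemma \ref{dimbound}, show that $M^c(X)$ differs from $M^c(U_i)(-m_i)[-2m_i]$ by a motive in $E_{\dim(X)-c_i}^{\perp}$ (with $c_i=\codim(S_i)$), and similarly for $E$ versus $\tilde U_i$. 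Passing to the limit $c_i\arrow\infty$ via Lemma \ref{separating}, it suffices to prove the lemma at the scheme level.

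Now let $\tilde U\arrow U$ be a principal $GL(n)$-bundle of schemes, with associated rank-$n$ vector bundle $F$ on $U$. Define schemes $Z_0,\ldots,Z_n$ by letting $Z_k$ be the open subscheme of $F^{\oplus k}$ consisting of linearly independent $k$-tuples of sections of $F$, so that $Z_0=U$ and $Z_n=\tilde U$. The forgetful map $Z_k\arrow Z_{k-1}$ realizes $Z_k$ as the complement in the rank-$n$ vector bundle $F|_{Z_{k-1}}$ of the rank-$(k-1)$ sub-vector bundle spanned by the first $k-1$ sections. Homotopy invariance and the localization triangle give distinguished triangles
$$M^c(Z_{k-1})(k-1)[2k-2]\arrow M^c(Z_{k-1})(n)[2n]\arrow M^c(Z_k)\arrow M^c(Z_{k-1})(k-1)[2k-1],$$
reducing the comparison of $M^c(\tilde U)$ and $M^c(U)$ to an iterated sequence of such triangles.

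For the forward direction, if $U$ is mixed Tate modulo codimension $r$, then using the twisting rule $E_j^{\perp}\otimes R(1)[2]=E_{j+1}^{\perp}$ together with the inequality $\dim(Z_k)-\dim(Z_{k-1})=n\geq k-1$, both $M^c(Z_{k-1})(k-1)[2k-2]$ and $M^c(Z_{k-1})(n)[2n]$ are mixed Tate modulo dimension $\dim(Z_k)-r$. By Lemma \ref{Tatemodulo}, $Z_k$ is mixed Tate modulo codimension $r$ at each stage, so $\tilde U=Z_n$ is.

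The main obstacle is the converse direction, where the shift $\dim(\tilde U)-\dim(U)=n^2$ prevents a single application of the long exact sequence from yielding the desired conclusion. I would argue by induction on $r$, with base case $r=0$ given by Lemma \ref{dimbound}. Assuming $\tilde U$ is mixed Tate modulo codimension $r$, the inductive hypothesis (applied to $r-1$) implies that $U$, and indeed each intermediate $Z_k$, is mixed Tate modulo codimension $r-1$. Applying $\Hom(M(Y)(a)[b],-)$ for $Y$ smooth projective and $a>\dim(Z_{k-1})-r$ to the triangles above, and combining the inductive hypothesis on $Z_{k-1}$ (mod codim $r-1$) with the hypothesis on $Z_k$ (mod codim $r$), the long exact sequence forces $\Hom(M(Y)(a)[b],K_{Z_{k-1}})=0$, where $K_{Z_{k-1}}=\cone(C(M^c(Z_{k-1}))\arrow M^c(Z_{k-1}))$. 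Iterating from $k=n$ down to $k=1$, one obtains that $U=Z_0$ is mixed Tate modulo codimension $r$. The key bookkeeping is that the total dimension shift $n^2$ is distributed evenly across the $n$ triangles, each contributing a single twist, while the induction on $r$ closes the ``off by one'' gap left by each individual long exact sequence.
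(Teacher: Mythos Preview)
Your argument is correct, and it takes a genuinely different route from the paper's proof. The paper never reduces to schemes; instead it works directly with the stack and exploits the parabolic structure of $GL(n)$. Concretely, the paper first treats $n=1$: for a principal $G_m$-bundle $E\arrow X$ one has the localization triangle $M^c(X)\arrow M^c(X)(1)[2]\arrow M^c(E)$, and the converse direction is handled by a ``minimal $j$'' trick (if $W=\cone(C(M^c(X))\arrow M^c(X))$ lies in $E_j^{\perp}$ but not $E_{j-1}^{\perp}$, the triangle $W(-1)[-2]\arrow W\arrow N(-1)[-2]$ forces a contradiction). For general $n$, the paper factors $E\arrow X$ through $E/U\arrow E/B\arrow X$: the last map is an iterated projective bundle (so $M^c(E/B)$ is a finite direct sum of Tate twists of $M^c(X)$), the middle map is a principal $(G_m)^n$-bundle (handled by $n$ applications of the $G_m$ case), and $E\arrow E/U$ is an affine bundle (homotopy invariance).

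Your partial-frames filtration $Z_0\subset\cdots\subset Z_n$ is the Stiefel-type alternative to this Borel decomposition: each step is again the complement of a sub-vector-bundle in a vector bundle, so the same localization triangle appears, but with twists $(k-1)$ and $n$ rather than $0$ and $1$. Your induction on $r$ for the converse is exactly equivalent to the paper's minimal-$j$ argument, just rephrased. Two small remarks: first, the reduction to schemes is unnecessary---the partial-frames construction and the localization triangle (Theorem \ref{localization}) work verbatim for quotient stacks, and dropping that step would shorten the proof. Second, your sentence ``the long exact sequence forces $\Hom(M(Y)(a)[b],K_{Z_{k-1}})=0$'' is better phrased as: both $K_{Z_{k-1}}(k-1)[2k-2]$ and $K_{Z_k}$ lie in $E_{\dim(Z_{k-1})+n-r}^{\perp}$ (the first by the codim $r-1$ hypothesis and $k\le n$, the second by the codim $r$ hypothesis), so the triangle places $K_{Z_{k-1}}(n)[2n]$ there too, giving $K_{Z_{k-1}}\in E_{\dim(Z_{k-1})-r}^{\perp}$. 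What your approach buys is directness: you never invoke the group structure of $GL(n)$, only the frame-bundle description. What the paper's approach buys is a cleaner converse step (the $G_m$ triangle has twists $0$ and $1$, so no auxiliary inequality $k\le n$ is needed).
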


\begin{proof}
First consider the case $n=1$, so that $E$ is a principal
$G_m$-bundle over $X$. Think of $E$ as the complement of the zero
section in a line bundle over $X$. The localization triangle has the form
$$M^c(X)\arrow M^c(X)(1)[2]\arrow M^c(E)$$
in $DM(k;R)$. Consider the morphism of distinguished triangles:
$$\xymatrix@C+5pt@R-10pt{
C(M^c(X))\ar[r]\ar[d] & C(M^c(X))(1)[2]\ar[r]\ar[d] & C(M^c(E))\ar[d]\\
M^c(X)\ar[r] & M^c(X)(1)[2]\ar[r] & M^c(E)
}$$
Let $W$ be the cone of $C(M^c(X))\arrow M^c(X)$
and let $N$ be the cone
of $C(M^c(E))\arrow M^c(E)$. The diagram gives
a distinguished triangle $W\arrow W(1)[2]\arrow N$.

If $X$ is mixed Tate modulo codimension $r$,
then $W$ is in $E_{\dim(X)-r}^{\perp}$.
So $W(1)[2]$ is in $E_{\dim(X)+1-r}^{\perp}$, and hence
$N$ is in $E_{\dim(X)+1-r}^{\perp}=E_{\dim(E)-r}^{\perp}$.
That is, the stack $E$ is mixed Tate modulo codimension $r$,
as we want.

Conversely, suppose that $E$ is mixed Tate modulo codimension $r$.
That is, $N$ is in $E_{\dim(E)-r}^{\perp}=E_{\dim(X)+1-r}^{\perp}$.
By Lemma \ref{dimbound}, $X$ is in $E_{\dim(X)}^{\perp}$.
By Lemma \ref{coloc}, $C(X)$ is also in $E_{\dim(X)}^{\perp}$, and hence
$W$ is in $E_{\dim(X)}^{\perp}$. We want to show that $X$
is mixed Tate modulo codimension $r$, meaning that $W$
is in $E_{\dim(X)-r}^{\perp}$. If not, then there is a smallest
integer $j$ such that $W$ is in $E_j^{\perp}$; we have $j>\dim(X)-r$
by assumption. Then $W(-1)$ is in $E_{j-1}^{\perp}$.
The distinguished triangle 
$$W(-1)[-2]\arrow W\arrow N(-1)[-2]$$
gives that $W$ is in $E_{j-1}^{\perp}$, a contradiction.
Thus $X$ is mixed Tate modulo codimension $r$ if and only if
the principal $G_m$-bundle $E$ over $X$
is mixed Tate modulo codimension $r$.

Now let $E$ be a principal $GL(n)$-bundle over a stack $X$,
with $n$ arbitrary. Let $B$ be the subgroup of upper-triangular
matrices in $GL(n)$ over $k$. Then $E/B$ is an iterated projective
bundle over $X$, and so
$$M^c(E/B)\cong \oplus_j M^c(X)(a_j)[2a_j],$$
where $a_1,\ldots,a_{n!}$ are the dimensions of the Bruhat cells
of the flag manifold $GL(n)/B$. Assume that
$X$ is mixed Tate modulo codimension $r$, that is, modulo
dimension $\dim(X)-r$. Then $M^c(X)(a)[2a]$ is mixed Tate modulo
dimension $\dim(X)-r+a$, for any integer $a$. It follows that
$E/B$ is mixed Tate modulo dimension $\dim(X)-r+\dim(G/B)=
\dim(E/B)-r$. That is, $E/B$ is mixed Tate modulo codimension $r$.
Conversely, if $E/B$ is mixed Tate modulo codimension $r$,
then the summand $M^c(X)(\dim(G/B))[2\, \dim(G/B)]$ of $M^c(E/B)$
is mixed Tate modulo dimension $\dim(E/B)-r=\dim(X)+\dim(G/B)-r$,
and so $M^c(X)$ is mixed Tate modulo dimension $\dim(X)-r$,
thus modulo codimension $r$.

Next, let $U$ be the subgroup of strictly upper-triangular
matrices in $GL(n)$ over $k$. Since $B/U\cong (G_m)^n$,
the stack $E/U$ is a principal $(G_m)^n$-bundle over $E/B$.
Applying our result on principal $G_m$-bundles $n$ times,
we deduce that $E/U$ is mixed Tate modulo codimension $r$
if and only if $E/B$ is mixed Tate modulo codimension $r$,
hence if and only if $X$ is mixed Tate modulo codimension $r$. Finally,
$U$ is an extension of copies of the additive group,
and so homotopy invariance gives that
$$M^c(E)\cong M^c(E/U)(\dim(U))[2\, \dim(U)].$$
It follows that $E$ is mixed Tate modulo codimension $r$ if and only if
$X$ is mixed Tate modulo codimension $r$.
\end{proof}

\begin{corollary}
\label{gln}
Let $X$ be a quotient stack over a field $k$.
Let $E$ be a principal $GL(n)$-bundle over $X$ for some $n$, viewed
as a stack over $k$. Then $E$
is mixed Tate (in $DM(k;R)$)
if and only if $X$ is mixed Tate.
\end{corollary}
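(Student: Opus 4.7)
The plan is to deduce Corollary \ref{gln} directly from the codimension-wise statement already proved in Lemma \ref{glnmodulo}, by bootstrapping from ``mixed Tate modulo every codimension'' to ``mixed Tate'' via Corollary \ref{approx}.

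First, I would observe that for any quotient stack $W$ over $k$, the motive $M^c(W)$ is mixed Tate if and only if $W$ is mixed Tate modulo codimension $r$ for every integer $r$. One direction is immediate from the definition: if $M^c(W)$ is mixed Tate, then the cone of $C(M^c(W))\arrow M^c(W)$ is zero, hence lies in every $E_m^{\perp}$. For the converse, ``mixed Tate modulo codimension $r$'' for $W$ means ``mixed Tate modulo dimension $\dim(W)-r$'' for $M^c(W)$; letting $r$ range over all integers makes $\dim(W)-r$ range over all integers, so $M^c(W)$ is mixed Tate modulo dimension $j$ for every $j$. By Corollary \ref{approx}, $M^c(W)$ is then mixed Tate.

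Second, I would apply this reformulation to both $X$ and $E$, and chain it with Lemma \ref{glnmodulo}: $X$ is mixed Tate if and only if $X$ is mixed Tate modulo codimension $r$ for every $r$, which by Lemma \ref{glnmodulo} is equivalent to $E$ being mixed Tate modulo codimension $r$ for every $r$, and this is equivalent to $E$ being mixed Tate. This completes the proof.

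There is essentially no obstacle here, because the substantive work — handling the $G_m$-case via the localization triangle and reducing $GL(n)$ to $B$ to $U$ to a point via iterated projective bundles, copies of $G_m$, and homotopy invariance — has already been carried out in Lemma \ref{glnmodulo}. The only thing to verify is that the word ``modulo codimension $r$'' can be removed by letting $r\to\infty$, and that is precisely the content of Corollary \ref{approx}.
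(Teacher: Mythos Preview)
Your proof is correct and follows essentially the same approach as the paper: invoke Lemma \ref{glnmodulo} for every $r$ and then apply Corollary \ref{approx} to pass from ``mixed Tate modulo every codimension'' to ``mixed Tate''. The paper's proof is a one-sentence version of your argument.
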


\begin{proof}
This follows from Lemma \ref{glnmodulo}, since a motive is mixed
Tate if and only if it is mixed Tate modulo dimension $r$ for all
integers $r$ (Corollary \ref{approx}).
\end{proof}

\begin{corollary}
\label{finitedim}
Let $Y$ be a quasi-projective scheme over a field $k$ and $G$
an affine group scheme of finite type over $k$ that acts on $Y$
such that there is a $G$-equivariant ample line bundle on $Y$.
Let $G\inj GL(n)$ be a faithful representation of $G$ over $k$.
Then (the compactly supported motive
of) the stack $Y/G$ over $k$ is mixed Tate if and only if the scheme
$(Y\times GL(n))/G$ over $k$ is mixed Tate.
\end{corollary}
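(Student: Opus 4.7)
The plan is to recognize $(Y\times GL(n))/G$ as a principal $GL(n)$-bundle over the stack $Y/G$, and then invoke Corollary \ref{gln} directly. Specifically, I would let $G$ act on $Y\times GL(n)$ by the diagonal action, where on the second factor $G\inj GL(n)$ acts by right translation (and on the first by the given action). Since right translation by $G$ on $GL(n)$ is free, the diagonal action on $Y\times GL(n)$ is free, so the quotient $(Y\times GL(n))/G$ is an algebraic space; because $Y$ has a $G$-equivariant ample line bundle and $GL(n)$ is quasi-projective, Proposition 7.1 of GIT (as already cited in the construction preceding Theorem \ref{motstack}) shows that $(Y\times GL(n))/G$ is in fact a quasi-projective scheme.

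Next I would check that the residual left $GL(n)$-action on $(Y\times GL(n))/G$ (by left translation on the second factor, which commutes with the right $G$-action) makes the first-projection map
\[
(Y\times GL(n))/G \arrow Y/G
\]
into a principal $GL(n)$-bundle of stacks over $k$. Indeed, pulling back along a smooth cover of $Y/G$ by $Y$ (or any $G$-torsor over a scheme) gives the trivial bundle, so the projection is étale-locally trivial with structure group $GL(n)$.

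Having set up this principal bundle, the corollary follows immediately from Corollary \ref{gln}: with $X=Y/G$ and $E=(Y\times GL(n))/G$, that corollary says $E$ is mixed Tate in $DM(k;R)$ if and only if $X$ is mixed Tate. The main (and only) potentially subtle step is the verification that $(Y\times GL(n))/G$ is actually a scheme in the sense required so that its compactly supported motive in $DM(k;R)$ coincides with the one produced by the stack definition of Theorem \ref{motstack}; but this is guaranteed by the quasi-projectivity argument above, after which Corollary \ref{gln} applies with no further work.
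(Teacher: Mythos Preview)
Your proposal is correct and follows exactly the paper's approach: the paper's proof is the one-liner ``The scheme $(Y\times GL(n))/G$ is a principal $GL(n)$-bundle over the stack $Y/G$. So this follows from Corollary \ref{gln}.'' Your additional verification that the quotient is a quasi-projective scheme (via \cite[Proposition 7.1]{GIT}) and that the projection is a principal $GL(n)$-bundle just spells out details the paper leaves implicit.
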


\begin{proof}
The scheme $(Y\times GL(n))/G$ is a principal $GL(n)$-bundle
over the stack $Y/G$. So this follows from Corollary \ref{gln}.
\end{proof}

For example, $BG$ is mixed Tate if and only if the scheme
$GL(n)/G$ is mixed Tate, for one or any faithful representation
$G\inj GL(n)$ over $k$.

As a result, we now show that
the structure of a classifying space $BG$ is determined in some ways
by its properties in low codimension, namely codimension $n^2$ (roughly),
where $n$ is the dimension of a faithful representation of $G$.
Theorem \ref{reduceBG} reduces the question of whether $BG$
is mixed Tate even further,
to properties in codimension $n$ (roughly) together with properties
of subgroups of $G$.

\begin{theorem}
\label{finitedimapprox}
Let $G$ be an affine group scheme over a field $k$. Suppose that
$G$ has a faithful representation of dimension $n$ over $k$.
If $BG$ is mixed Tate in $DM(k;R)$ modulo codimension $n^2-\dim(G)+1$,
then $BG$ is mixed Tate in $DM(k;R)$.
\end{theorem}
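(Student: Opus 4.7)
The plan is to combine Corollary \ref{finitedim}, Lemma \ref{glnmodulo}, and Lemma \ref{minusone}. The faithful representation gives an embedding $G\inj GL(n)$ over $k$, so that the scheme $E:=(GL(n))/G$ is a principal $GL(n)$-bundle over the stack $BG$. Note the dimension count $\dim(E)=n^2-\dim(G)$, while $\dim(BG)=-\dim(G)$.

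First I would apply Lemma \ref{glnmodulo} to the principal $GL(n)$-bundle $E\arrow BG$ with $r=n^2-\dim(G)+1$. The hypothesis is exactly that $BG$ is mixed Tate modulo codimension $r$, so the lemma gives that $E$ is mixed Tate modulo codimension $r$ as well. Unpacking the definitions, this means $M^c(E)$ is mixed Tate modulo dimension $\dim(E)-r=(n^2-\dim(G))-(n^2-\dim(G)+1)=-1$.

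Since $E$ is an honest separated scheme of finite type over $k$ (not merely a stack), Lemma \ref{minusone} applies and upgrades this to the conclusion that $M^c(E)$ is a mixed Tate motive in $DM(k;R)$. Finally, Corollary \ref{finitedim} translates mixed Tateness of $E=(GL(n))/G$ back to mixed Tateness of $BG$, completing the proof.

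There is no real obstacle here beyond lining up the numerical indices correctly; the theorem is essentially a bookkeeping corollary that says the codimension threshold $n^2-\dim(G)+1=\dim(E)+1$ for $BG$ is precisely the one which, after passing to the scheme $E$ via Lemma \ref{glnmodulo}, falls into the regime where Lemma \ref{minusone} forces genuine mixed Tateness. The only point worth double-checking is that the definitions of ``mixed Tate modulo codimension $r$'' for the stack $BG$ and for the scheme $E$ refer to the same integer $r$ (and therefore to different dimension thresholds, differing by $\dim(GL(n))=n^2$), which is exactly how Lemma \ref{glnmodulo} is formulated.
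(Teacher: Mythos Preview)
Your proof is correct and follows exactly the same route as the paper: pass to the principal $GL(n)$-bundle $GL(n)/G\arrow BG$ via Lemma \ref{glnmodulo}, observe that the codimension bound becomes dimension $-1$ on the scheme $GL(n)/G$, invoke Lemma \ref{minusone}, and then go back via Corollary \ref{finitedim}. The numerical bookkeeping you spell out is precisely what the paper does, just with slightly more detail.
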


\begin{proof}
We have a principal $GL(n)$-bundle $GL(n)/G\arrow BG$ of stacks
over $k$. By Lemma \ref{glnmodulo}, if $BG$ is mixed Tate modulo
codimension $n^2-\dim(G)+1$, then the variety
$GL(n)/G$ is also mixed Tate modulo
codimension $n^2-\dim(G)+1$. 
Since $GL(n)/G$ has dimension $n^2-\dim(G)$,
Lemma \ref{minusone} gives that $GL(n)/G$ is mixed Tate. By
Corollary \ref{finitedim}, $BG$ is mixed Tate.
\end{proof}

\section{The mixed Tate property for classifying spaces}
\label{TateBGsect}

The work of Bogomolov and Saltman defines a dichotomy
among all finite groups $G$: is $BG_{\C}$ stably rational?
(This means that the variety
$V/G$ is stably rational for one, or any, faithful
representation $V$ of $G$ over $\C$.) This paper has considered
several other dichotomies among finite groups $G$. Is the birational
motive of $BG_{\C}$ trivial? Does $BG_{\C}$ have the weak
or strong Chow K\"unneth property? It would be interesting
to know whether these conditions are all equivalent.

Ekedahl defined another property with the same flavor,
for a finite group scheme $G$ over a field $k$.
Namely, when does the stack $BG$ have the class of a point
in the ring $A=K_0(\Var_k)[L^{-1},(L^n-1)^{-1}:\, n\geq 1]$?
Here $K_0(\Var_k)$ denotes the Grothendieck ring of $k$-varieties
and $L$ is the class of $A^1$.
Ekedahl showed that
this property is equivalent to the statement (not mentioning
stacks) that for one or any faithful representation $G\inj GL(n)$,
the variety $GL(n)/G$ is equal to $GL(n)$ in the ring $A$
\cite[Proposition 3.1]{Ekedahl}. I do not know any implications
between Ekedahl's property and the other properties we have mentioned,
but it may be that all these
properties are equivalent. In particular, Ekedahl's 
property fails if $G$ has nontrivial unramified $H^2$
\cite[Theorem 5.1]{Ekedahl}; for such groups, all the properties
we have mentioned fail.

In this section, we consider another dichotomy among finite
groups, or more generally among affine group schemes $G$:
is $BG$ mixed Tate, meaning that the motive $M^c(BG)$ is mixed
Tate? This property is equivalent to the motivic K\"unneth
property formulated in the introduction to section \ref{quotientsect}.
It implies the
Chow K\"unneth property, since it gives information
about all of motivic homology, not just Chow groups.
The mixed Tate property may be equivalent
to all the other properties mentioned above.

We have examples of finite groups which are not mixed Tate
(say over $\C$),
because they do not even have
the weak Chow K\"unneth property (Corollary \ref{BG}).
To justify the concept, we will also give examples
of finite groups which are mixed Tate: the symmetric
groups (Theorem \ref{symmetric}), the finite general linear
groups in cross-characteristic (Theorem \ref{gl}),
and all finite subgroups
of $GL(2)$ (Corollary \ref{dim2}).
It is conceivable that all ``naturally occurring'' finite groups
are mixed Tate over $\C$. For example, Bogomolov
conjectured that for every finite simple group $G$, quotient
varieties $V/G$ are stably rational \cite{Bogomolovstable}. In that
direction, Kunyavskii showed that every finite simple group
has unramified $H^2$ equal to zero \cite{Kunyavskii}.
Likewise, I conjecture that all finite simple groups
are mixed Tate. More generally, all quasisimple or almost simple
groups should be mixed Tate.

In order to give examples of finite groups which are mixed Tate,
we start by proving some formal properties of mixed Tate stacks.
By Corollary \ref{finitedim},
$BG$ is mixed Tate if and only if the variety
$GL(n)/G$ is mixed Tate
for a faithful representation $V$ of $G$ with $\dim(V)=n$. But
$GL(n)/G$ may be hard to analyze because it has high dimension, namely
$n^2$. Theorem \ref{reduceBG} gives
a sufficient condition for $BG$ to be mixed
Tate in terms of the variety $(V-S)/G$, which has dimension only $n$,
together with information on subgroups of $G$.

Throughout this section, we work in the category $DM(k;R)$ for a field $k$
and a commutative ring $R$ in which the exponential characteristic
of $k$ is invertible.

\begin{lemma}
Let $X$ be a quotient stack over a field $k$ and $Y$ a closed substack.
If two of $X$, $Y$, $X-Y$ are mixed Tate, then so is the third.
\end{lemma}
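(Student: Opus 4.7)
The plan is to reduce the lemma to the formal fact that mixed Tate motives form a triangulated subcategory of $DM(k;R)$, and then apply the localization triangle for quotient stacks already proved as Theorem \ref{localization}.

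First, by Theorem \ref{localization}, there is a distinguished triangle
$$M^c(Y)\arrow M^c(X)\arrow M^c(X-Y)\arrow M^c(Y)[1]$$
in $DM(k;R)$. Second, the category $DMT(k;R)$ of mixed Tate motives is by construction a localizing subcategory of $DM(k;R)$, hence in particular a strictly full triangulated subcategory; as such, it is closed under shifts and under cones of morphisms between its objects.

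Finally, I would apply this closure property to the localization triangle above, together with its rotations. In each of the three cases, two of the three vertices are assumed to be mixed Tate, and the third is then isomorphic to a cone (or shift of a cone) of a morphism in $DMT(k;R)$, so it lies in $DMT(k;R)$ as well. Concretely: if $M^c(Y)$ and $M^c(X)$ are mixed Tate, then $M^c(X-Y)$ is the cone of a morphism in $DMT(k;R)$; if $M^c(X)$ and $M^c(X-Y)$ are mixed Tate, the rotated triangle $M^c(X)\arrow M^c(X-Y)\arrow M^c(Y)[1]$ shows that $M^c(Y)[1]$, and hence $M^c(Y)$, is mixed Tate; if $M^c(Y)$ and $M^c(X-Y)$ are mixed Tate, the rotated triangle $M^c(X-Y)[-1]\arrow M^c(Y)\arrow M^c(X)$ shows that $M^c(X)$ is mixed Tate.

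There is really no obstacle here beyond having the localization triangle available: the rest is a formal consequence of how triangulated subcategories behave. The only content of the argument is the reference to Theorem \ref{localization}.
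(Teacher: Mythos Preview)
Your proof is correct and takes essentially the same approach as the paper, which simply cites the localization triangle of Theorem \ref{localization} and leaves the triangulated-subcategory closure implicit. Your version just spells out the three rotations explicitly.
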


\begin{proof}
This follows from the localization triangle
$$M^c(Y)\arrow M^c(X)\arrow M^c(X-Y)$$
(Theorem \ref{localization}).
\end{proof}

\begin{lemma}
\label{localcoefficients}
Let $k$ be a field, and let $e$ be the exponential characteristic of $k$.
A quotient stack $X$ over a field $k$ is mixed Tate with $\Z[1/e]$ coefficients
(that is, in $DM(k;\Z[1/e])$)
if and only if it is mixed Tate with $\Z_{(p)}$ coefficients
for all prime numbers $p$ that are invertible in $k$.
\end{lemma}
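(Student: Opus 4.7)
The plan is to reduce the question to a scheme, so that the relevant motive is a compact object of $DM(k;R)$, and then to apply a local-to-global argument on $\Z[1/e]$-modules. Choose a presentation of $X$ as a quotient stack $Y/G$ and a faithful representation $G\inj GL(n)$ over $k$. By Corollary \ref{finitedim}, for each coefficient ring $R$ in which $e$ is invertible, $X$ is mixed Tate in $DM(k;R)$ if and only if the scheme $Z:=(Y\times GL(n))/G$ is. So it suffices to prove the corresponding statement for a separated scheme $Z$ of finite type over $k$, whose compactly supported motive is a compact object of $DM(k;R)$.

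The forward direction is formal: the change-of-coefficients functor $-\otimes^L_{\Z[1/e]}\Z_{(p)}\colon DM(k;\Z[1/e])\arrow DM(k;\Z_{(p)})$ is a left adjoint (to restriction of scalars), so it preserves arbitrary direct sums and distinguished triangles; it sends $\Z[1/e](a)$ to $\Z_{(p)}(a)$ and $M^c(Z;\Z[1/e])$ to $M^c(Z;\Z_{(p)})$, hence carries mixed Tate motives to mixed Tate motives. For the reverse direction, assume $M^c(Z;\Z_{(p)})$ is mixed Tate for every prime $p$ invertible in $k$. Let $W$ be a cone of the colocalization map $C(M^c(Z))\arrow M^c(Z)$ in $DM(k;\Z[1/e])$; we will show $W=0$. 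Tensoring the defining triangle with $\Z_{(p)}$, both the first term (the image under change of coefficients of a mixed Tate motive) and the second term (by hypothesis) are mixed Tate in $DM(k;\Z_{(p)})$, so $W\otimes\Z_{(p)}$ is mixed Tate. Since $M^c(Z)$ is compact, the standard identification
$$\Hom(P,N)\otimes_{\Z[1/e]}\Z_{(p)}\cong \Hom_{\Z_{(p)}}(P\otimes\Z_{(p)},N\otimes\Z_{(p)})$$
for compact $P$ implies that $C(M^c(Z))\otimes\Z_{(p)}\arrow M^c(Z)\otimes\Z_{(p)}$ still induces isomorphisms on motivic homology, so $W\otimes\Z_{(p)}$ has vanishing motivic homology and is therefore zero by Corollary \ref{mixed}.

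Finally, for any compact $P$ in $DM(k;\Z[1/e])$ and any integer $a$, the $\Z[1/e]$-module $\Hom(P[a],W)$ satisfies $\Hom(P[a],W)\otimes_{\Z[1/e]}\Z_{(p)}=0$ for every $p$ invertible in $k$, by the same identification, and hence vanishes (a $\Z[1/e]$-module all of whose localizations at primes invertible in $\Z[1/e]$ are zero must be zero). By Lemmas \ref{smoothgens} and \ref{gens}, $W=0$, completing the reverse direction. The one technical ingredient that requires care is the compatibility of $\Hom(P,-)$ with change of coefficients for compact $P$; this follows from compactness (which makes $\Hom(P,-)$ commute with filtered colimits) together with the description of $\Z_{(p)}$ as a filtered colimit of copies of $\Z[1/e]$.
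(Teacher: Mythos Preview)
Your proof is correct, and after the common first step of reducing to a scheme via the $GL(n)$-bundle trick (Corollary~\ref{finitedim}), it takes a genuinely different route from the paper's.

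The paper invokes Theorem~\ref{main} to translate ``mixed Tate'' into the motivic K\"unneth property for $E\times Y$ over all $k$-schemes $Y$, and then observes that since $\Z_{(p)}$ is flat over $\Z[1/e]$, the K\"unneth spectral sequence with $\Z_{(p)}$ coefficients is literally the $p$-localization of the one with $\Z[1/e]$ coefficients; the local-to-global principle for $\Z[1/e]$-modules finishes the argument. Your approach instead stays inside the triangulated category: you take the cone $W$ of $C(M^c(Z))\to M^c(Z)$ in $DM(k;\Z[1/e])$, show $W\otimes\Z_{(p)}=0$ for each $p$ (mixed Tate with vanishing motivic homology, so zero by Corollary~\ref{mixed}), and then deduce $W=0$ by testing against compact generators and applying the local-to-global principle to the Hom modules. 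Your method avoids the K\"unneth spectral sequence entirely, at the cost of needing the change-of-coefficients adjunction and the compatibility $\Hom(P,N)\otimes\Z_{(p)}\cong\Hom(P\otimes\Z_{(p)},N\otimes\Z_{(p)})$ for compact $P$, which you justify correctly via filtered colimits. One small remark: the phrase ``Since $M^c(Z)$ is compact'' in your middle paragraph is slightly misleading, since the identification you invoke there only needs the \emph{source} $P$ (the Tate objects) to be compact; the compactness of $M^c(Z)$ plays no direct role at that point, and indeed $C(M^c(Z))$ need not be compact. The reduction to a scheme is still essential, though, because change of coefficients is a left adjoint and so need not commute with the homotopy limit that defines $M^c$ of a quotient stack.
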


\begin{proof}
Write $X$ as the quotient stack $A/G$ for some affine group scheme $G$
of finite type over $k$ and some quasi-projective scheme $A$ over $k$
with a $G$-equivariant ample line bundle. Let $G\inj GL(n)$
be a faithful representation over $k$. Then $E=(A\times GL(n))/G$
is a quasi-projective scheme over $k$, and $GL(n)$ acts on $E$
with quotient stack $E/GL(n)\cong X$.
By Corollary \ref{gln}, $M^c(E)$ is mixed Tate
(with any coefficients) if and only if $M^c(X)$ is mixed Tate.
So it suffices to show that $M^c(E)$ is mixed Tate
in $DM(k;\Z[1/e])$ if and only if it is mixed Tate in $DM(k;\Z_{(p)})$
for all prime numbers $p$ that are invertible in $k$.

For a commutative ring $R$, $E$ is $R$-mixed Tate if and only if it
has the K\"unneth property for the $R$-motivic homology of $E\times Y$
for all separated $k$-schemes $Y$ of finite type (Theorem \ref{main}).
The motivic homology with $R$ coefficients of a $k$-scheme
is related to motivic homology with $\Z$ coefficients by the universal
coefficient theorem. Let $p$ be a prime number that is invertible
in $k$. Since $\Z_{(p)}$ and $\Z[1/e]$ are flat over $\Z$,
the K\"unneth spectral sequence for $E\times Y$ with 
$\Z_{(p)}$ coefficients is just the localization at $p$
of the spectral sequence with $\Z[1/e]$ coefficients. A homomorphism
of $\Z[1/e]$-modules is an isomorphism if and only if it is an
isomorphism $p$-locally for all prime numbers $p$ that are invertible
in $k$. Therefore,
$X$ is $\Z[1/e]$-mixed Tate if and only if it is $\Z_{(p)}$-mixed
Tate for all prime numbers $p$ that are invertible in $k$.
\end{proof}

\begin{lemma}
\label{sylowtate}
Let $G$ be a finite group, $p$ a prime number, and $H$ a Sylow
$p$-subgroup of $G$. Fix a base field $k$ in which $p$ is invertible.
Let $R$ be the ring
$\Z/p$ or $\Z_{(p)}$. If $BH$ is $R$-mixed
Tate, then $BG$ is $R$-mixed Tate.
\end{lemma}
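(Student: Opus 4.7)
The plan is to exhibit $M^c(BG)$ as a direct summand of $M^c(BH)$ in $DM(k;R)$ by constructing a motivic transfer associated to the finite \'etale ``cover'' $BH \to BG$ of degree $[G:H]$. Since $H$ is a Sylow $p$-subgroup, the index $[G:H]$ is coprime to $p$ and hence invertible in $R$ (both for $R = \Z/p$ and for $R = \Z_{(p)}$). This invertibility is the only number-theoretic input required; once the transfer is built, a retraction argument together with the fact, recalled in section \ref{motives}, that summands of mixed Tate motives in $DM(k;R)$ are mixed Tate, finishes the proof.

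To construct the transfer, I would work at the level of the finite-dimensional approximations defining $M^c(BG)$ and $M^c(BH)$. Choose a sequence $\cdots \surj V_2 \surj V_1$ of surjections of $G$-representations over $k$, together with $G$-invariant closed subsets $\tilde{S}_j \subsetneq V_j$ such that $G$ acts freely on $V_j - \tilde{S}_j$ with quasi-projective quotient, $\tilde{S}_{j+1}$ lies in the preimage of $\tilde{S}_j$, and $\codim(\tilde{S}_j, V_j) \to \infty$. By restriction, each $V_j$ is simultaneously a representation of $H$, with $H$ acting freely on the same open subsets. The natural map
$$f_j \colon (V_j - \tilde{S}_j)/H \arrow (V_j - \tilde{S}_j)/G$$
is then a finite \'etale morphism of degree $[G:H]$ between quasi-projective schemes. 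Crucially, the ranks $n_j = \dim V_j$ used in the Tate twists in the definition of $M^c(BG)$ and $M^c(BH)$ coincide, so the two defining inverse systems line up index-by-index.

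For each $j$, the morphism $f_j$ is both proper and flat, so it yields a pushforward $(f_j)_* \colon M^c((V_j - \tilde{S}_j)/H) \arrow M^c((V_j - \tilde{S}_j)/G)$ and a flat pullback $f_j^* \colon M^c((V_j - \tilde{S}_j)/G) \arrow M^c((V_j - \tilde{S}_j)/H)$ with $(f_j)_* \circ f_j^* = [G:H]\cdot \mathrm{id}$. By standard base change for Cartesian squares along open immersions, these maps are compatible with the transition morphisms (flat pullback followed by homotopy invariance) used to build the two homotopy limits, and so assemble into morphisms of inverse systems. Passing to $\holim_j$, I obtain morphisms
$$f_* \colon M^c(BH) \arrow M^c(BG), \qquad f^* \colon M^c(BG) \arrow M^c(BH)$$
with $f_* \circ f^* = [G:H] \cdot \mathrm{id}_{M^c(BG)}$. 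Since $[G:H]$ is a unit in $R$, the endomorphism $[G:H]^{-1} f_* \circ f^*$ equals the identity, so $M^c(BG)$ is a direct summand of $M^c(BH)$. As $M^c(BH)$ is mixed Tate by hypothesis, $M^c(BG)$ is mixed Tate as well.

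The main technical obstacle I expect is the coherent assembly of the $(f_j)_*$ and $f_j^*$ into actual morphisms of the inverse systems, and hence of their homotopy limits. Because homotopy limits in a triangulated category are defined only up to non-canonical isomorphism, one has to verify base change identities for Cartesian squares whose vertical arrows are the \'etale quotient maps $f_j$ and whose horizontal arrows are the open immersions and affine-bundle projections in the transition maps. These identities follow from the functorialities of $M^c$ recorded in section \ref{motives}, but this compatibility check is the only step beyond the formal transfer argument.
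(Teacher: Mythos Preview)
Your approach is correct and takes a genuinely different route from the paper's. The paper does not construct a retraction of motives at all; instead it invokes the equivalence (from the discussion at the start of section~\ref{quotientsect} and Theorem~\ref{main}) between ``$M^c(BG)$ is mixed Tate'' and ``$BG$ has the motivic K\"unneth property,'' and then applies the transfer at the level of motivic homology groups: the K\"unneth spectral sequence for $BG\times Y$ is a summand, with $R$ coefficients, of the one for $BH\times Y$, so convergence for the latter implies convergence for the former. Your argument bypasses the K\"unneth criterion and works directly in $DM(k;R)$, producing $M^c(BG)$ as a retract of $M^c(BH)$ via the finite \'etale cover; this is arguably cleaner conceptually and uses only the fact that summands of mixed Tate motives are mixed Tate.

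One caveat: the obstacle you flag is slightly mis-identified. The base change identities making $(f_j)_*$ and $f_j^*$ into morphisms of inverse systems are routine (proper and flat base change along open immersions and affine bundles). The more delicate point is that, even given a strict retraction of inverse systems, the induced fill-ins on homotopy limits in a bare triangulated category need not compose to the identity on the nose, because cones are not functorial. This is easily handled here since $DM(k;R)$ is the homotopy category of a stable model category (section~\ref{motives}), so one may compute $\holim$ at the model level where it is genuinely functorial; alternatively, one can avoid the issue entirely by checking mixed-Tateness modulo each dimension via Corollary~\ref{approx} at the finite-dimensional approximations, where the retraction is unproblematic. Either way, the gap is cosmetic rather than substantive.
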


\begin{proof}
Use that $BG$ is $R$-mixed Tate if and only if it has 
the K\"unneth property for $BG\times Y$ for all $k$-schemes
$Y$ of finite type. Let $R$ be $\Z/p$ or $\Z_{(p)}$.
Using the transfer, the K\"unneth
spectral sequence for $BG\times Y$ is a summand with $R$ coefficients
of the spectral sequence for $BH\times Y$. Therefore, if $BH$
satisfies the motivic K\"unneth property with $R$ coefficients,
then so does $BG$.
\end{proof}

For a representation $V$ of a finite group $G$ and $K$ a subgroup
of $G$, $V^K$ means the linear subspace fixed by $K$. Following
Ekedahl \cite{Ekedahl}, let $V_K$ be the open subset of $V^K$
of points with stabilizer in $G$ equal to $K$, meaning that
$V_K=V^K-\cup_{K\subsetneqq L}V^L$.

\begin{lemma}
\label{glcut}
Let $s$ be a natural number.
Let $V$ be a faithful representation of a finite group $G$ over
a field $k$.
For each subgroup
$K$ of $G$ that occurs as the stabilizer of a point
in $V$, assume that the stack $V_K/N_G(K)$ is mixed Tate in $DM(k;R)$
modulo codimension $s-\codim(V^K\subset V)$.
Then $BG$ is mixed Tate modulo codimension $s$.
\end{lemma}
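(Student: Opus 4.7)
\textbf{Proof plan for Lemma \ref{glcut}.} The strategy is to pass from $BG$ to the quotient stack $V/G$ via the affine bundle $V/G\arrow BG$, then to stratify $V/G$ by stabilizer type and match the strata to the hypothesis.

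First, I would reduce to a statement about $M^c(V/G)$. Since $G$ is finite and $V$ is a representation of $G$, the morphism of quotient stacks $[V/G]\arrow BG$ is a vector bundle of rank $n=\dim V$, so homotopy invariance (as recorded for quotient stacks in section \ref{quotientsect}) gives an isomorphism
$$M^c(V/G)\cong M^c(BG)(n)[2n]$$
in $DM(k;R)$. A short check with the definitions of $E_m$ and $E_m^\perp$ shows that $Y\in E_m^\perp$ if and only if $Y(1)[2]\in E_{m+1}^\perp$, and the colocalization $C$ commutes with Tate twist; hence $M^c(BG)$ is mixed Tate modulo codimension $s$ (i.e. modulo dimension $\dim(BG)-s=-s$) if and only if $M^c(V/G)$ is mixed Tate modulo dimension $n-s$. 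So it suffices to prove the latter.

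Next, I would stratify $V$ by stabilizer type. For each subgroup $K\leq G$ that occurs as a stabilizer, let $W_K=G\cdot V_K\subseteq V$ be the locally closed $G$-invariant subscheme of points whose stabilizer is conjugate to $K$. The subsets
$$V^{\leq d}=\{v\in V:\dim V^{\mathrm{Stab}(v)}\leq d\}$$
are closed and $G$-invariant (by upper semicontinuity of the stabilizer, i.e.\ lower semicontinuity of $\dim V^{\mathrm{Stab}(v)}$), and their successive differences are disjoint unions of strata $W_K$ with $\dim V^K=d$. Iterating the localization triangle for quotient stacks (Theorem \ref{localization}) along this finite filtration, and invoking the fact that motives mixed Tate modulo a fixed dimension form a triangulated subcategory (Lemma \ref{Tatemodulo}), reduces the claim to showing that each $[W_K/G]$ is mixed Tate modulo dimension $n-s$.

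Finally, I would match each stratum to the hypothesis. The inclusion $V_K\hookrightarrow W_K$ exhibits $W_K$ as the induced $G$-space $G\times^{N_G(K)}V_K$, whence the standard equivalence of quotient stacks $[W_K/G]\cong [V_K/N_G(K)]$. Since $N_G(K)$ is finite, the stack $V_K/N_G(K)$ has dimension $\dim V_K=\dim V^K=n-\codim(V^K\subset V)$, and the hypothesis that it is mixed Tate modulo codimension $s-\codim(V^K\subset V)$ unpacks to mixed Tate modulo dimension
$$\bigl(n-\codim(V^K\subset V)\bigr)-\bigl(s-\codim(V^K\subset V)\bigr)=n-s,$$
which is exactly what the reduction requires. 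The main obstacle is bookkeeping rather than conceptual: one must carefully justify the stack equivalence $[W_K/G]\cong [V_K/N_G(K)]$ (since $G$ does not act freely on $V_K$) and verify that the iterated localization argument is correctly set up along the stratification, but once these formal points are in hand the three steps above combine to give the conclusion.
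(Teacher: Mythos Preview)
Your proposal is correct and follows essentially the same approach as the paper's proof: reduce to the vector-bundle stack $V/G$, stratify by stabilizer type, identify each stratum with $V_K/N_G(K)$, and assemble via the localization triangle together with Lemma~\ref{Tatemodulo}. The paper is terser (it simply asserts that $V/G$ is the disjoint union of the locally closed substacks $V_K/N_G(K)$ and invokes localization), whereas you spell out the stack equivalence $[W_K/G]\cong[V_K/N_G(K)]$ and a specific filtration, but the argument is the same.
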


\begin{proof}
The stack $V/G$ is a vector bundle over $BG$. So if we can show that
the stack $V/G$ is mixed Tate modulo codimension $s$, then $BG$
is mixed Tate modulo codimension $s$, as we want.

The stack $V/G$ is the disjoint union
of the locally closed substacks $V_K/N_G(K)$ for all conjugacy
classes of stabilizer subgroups
$K$ of $G$. By assumption, each substack $V_K/N_G(K)$ is mixed Tate
modulo codimension $s-\codim(V^K\subset V)$, that is,
modulo dimension $\dim(V)-\dim(G)-s$. The motives that
are mixed Tate modulo dimension $\dim(V)-s$ form a triangulated
category (Lemma \ref{Tatemodulo}).
By the localization triangle
for stacks (Theorem \ref{localization}), the stack $V/G$
is mixed Tate modulo dimension $\dim(V)-s$, that is, modulo
codimension $s$.
\end{proof}

A next step is to express the assumptions on smaller groups in terms
of classifying spaces, as follows. This step
may not be needed in some examples,
but it leads to a neat statement, Theorem \ref{reduceBG}.
(We will apply Lemma \ref{backtoBG}
to the subgroups
$H=N_G(K)$ acting on $V^K$ in Lemma \ref{glcut}, typically not faithfully.)

\begin{lemma}
\label{backtoBG}
Let $s$ be a natural number.
Let $V$ be a representation of a finite group $H$ over a field $k$,
not necessarily faithful. Let $K_1=\ker(H\arrow GL(V))$.
Consider all chains $K_1\subsetneqq K_2\subsetneqq \cdots
\subsetneqq K_r \subset H$, $r\geq 1$, such that if we define
$N_i=\cap_{j\leq i}N_H(K_j)\subset H$, then $K_{i+1}$ is the stabilizer
of a point for $N_i$ acting on $V^{K_i}$. For every such chain,
assume that $BN_r$ is mixed Tate in $DM(k;R)$ modulo codimension $s$.
(In particular, for $r=1$,
we are assuming that $BH$ is mixed Tate modulo codimension $s$.)
Then the stack $V_{K_1}/H$ is mixed Tate modulo codimension $s$.
\end{lemma}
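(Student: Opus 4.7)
The plan is to induct on $\dim(V)$. The base case $\dim(V)=0$ gives $V=0$, $K_1=H$, and $V_{K_1}/H=BH$, which is mixed Tate modulo codimension $s$ by the $r=1$ case of the chain hypothesis.

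For the inductive step, the $r=1$ chain yields $BH$ mixed Tate modulo codimension $s$, and since $V/H$ is an affine bundle over $BH$, homotopy invariance gives $V/H$ mixed Tate modulo codimension $s$, i.e.\ modulo dimension $\dim(V)-s$. Stratifying $V/H$ by $H$-stabilizer type and applying the localization triangle (Theorem~\ref{localization}) together with Lemma~\ref{Tatemodulo}, it suffices to show that for every $H$-stabilizer subgroup $K$ with $K\supsetneq K_1$, the closed stratum $V_K/N_H(K)$ is mixed Tate modulo dimension $\dim(V)-s$.

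For each such $K$ I would apply Lemma~\ref{backtoBG} inductively to the triple $(N_H(K),\,V^K,\,K)$. Three points need checking. First, $\ker(N_H(K)\arrow GL(V^K))=K$: the inclusion $K\subseteq\ker$ is immediate from the definition of $V^K$, and conversely any element of the kernel fixes every point of $V_K$, which is nonempty since $K$ is a stabilizer, hence lies in $\text{stab}_H(v)=K$ for any $v\in V_K$. Second, $\dim(V^K)<\dim(V)$, because $K\not\subseteq K_1$ forces some element of $K$ to act nontrivially on $V$. Third, the chain hypothesis for the smaller instance is inherited from the original: any chain $K=K''_1\subsetneq K''_2\subsetneq\cdots\subsetneq K''_r\subset N_H(K)$ satisfying the stabilizer condition for the action of $N_H(K)$ on $V^K$ extends to a valid chain $K_1\subsetneq K\subsetneq K''_2\subsetneq\cdots\subsetneq K''_r$ in $H$ with the same terminal group $N_r$, using $N_H(K_1)=H$ so that the intersections of normalizers match at each stage.

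The inductive conclusion gives $(V^K)_K/N_H(K)$ mixed Tate modulo codimension $s$ in dimension $\dim(V^K)$, hence modulo dimension $\dim(V^K)-s\le\dim(V)-s$. The main obstacle I foresee is that the stratum arising from the inductive conclusion, $(V^K)_K=\{v\in V^K:\text{stab}_{N_H(K)}(v)=K\}$, may properly contain the desired stratum $V_K=\{v\in V^K:\text{stab}_H(v)=K\}$. Their difference decomposes into locally closed pieces $V_M/(N_H(K)\cap N_H(M))$ indexed by $N_H(K)$-conjugacy classes of $H$-stabilizers $M\supsetneq K$ satisfying $M\cap N_H(K)=K$. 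I would handle each such piece by a further nested application of Lemma~\ref{backtoBG} to a smaller triple (with ambient group $N_H(K)\cap N_H(M)$ and representation $V^M$), again extending the requisite chains backward to valid chains in $H$. This bookkeeping of chain extensions under nested reductions is the principal technical obstacle; the process terminates because each nesting strictly enlarges the kernel of the action in the finite group $H$, and combining all strata via localization then closes the induction.
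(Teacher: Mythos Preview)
Your strategy is the paper's: stratify $V/H$ by $H$-stabilizer, handle each closed stratum $V_K/N_H(K)$ by passing to the vector space $V^K$ with its $N_H(K)$-action, and iterate. You phrase this as induction on $\dim(V)$ while the paper unwinds the iteration directly, but the content is the same, and your verification that $\ker(N_H(K)\to GL(V^K))=K$ and that the chain hypothesis descends is exactly what the paper leaves implicit.

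The obstacle you flag---that the $N_H(K)$-stratum $(V^K)_K$ delivered by the inductive call may strictly contain the $H$-stratum $V_K$ you need---is real, and in fact the paper's own proof skates over the same point when it asserts that $V^{K_2}\setminus V_{K_2}$ decomposes by $N_2$-stabilizers $K_3\supsetneq K_2$. Your proposed fix needs two corrections. First, the termination claim is wrong: in the nested triple $(N_H(K)\cap N_H(M),\,V^M)$ with $M\cap N_H(K)=K$, the kernel of the action on $V^M$ is $M\cap N_H(K)\cap N_H(M)=K$, not larger; what does strictly decrease is $\dim V^M<\dim V^K$, so your induction on dimension still halts. Second, and more seriously, the group $N_H(K)\cap N_H(M)$ need not appear as any $N_r$ from a chain satisfying the lemma's hypothesis, precisely because the condition $M\cap N_H(K)=K$ says $M$ is \emph{not} an $N_2$-stabilizer of any point in $V^K$; so the hypothesis does not directly supply $B(N_H(K)\cap N_H(M))$ mixed Tate. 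This chain-extension bookkeeping is where the real work lies, and neither your sketch nor the paper's proof makes it fully explicit.
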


\begin{proof}
By our assumption (with $r=1$), the stack $BH$ is mixed Tate
modulo codimension $s$.
So the stack $V/H$ (a vector bundle over $BH$) is mixed Tate
modulo codimension $s$.
The difference $V/H-V_{K_1}/H$ is the disjoint union of the locally
closed substacks $(\coprod_{g\in H/N_H(K_2)} V_{gK_2 g^{-1}})/H$
for conjugacy classes of stabilizer subgroups $K_2$ for $H$ acting 
on $V$ with $K_1\subsetneqq K_2$. That quotient
is isomorphic to the stack $V_{K_2}/N_H(K_2)$. By our assumption (with
$r=2$), $BN_H(K_2)=BN_2$ is mixed Tate modulo codimension $s$,
and so the stack
$V^{K_2}/N_H(K_2)$ (a vector bundle over $BN_2$) is also mixed Tate
modulo codimension $s$.
The stack we want is the open substack $V_{K_2}/N_H(K_2)$
of $V^{K_2}/N_H(K_2)$. The complement is the disjoint union
of the locally closed substacks 
$$\bigg( \coprod_{g\in N_2/N_3} V_{gK_3 g^{-1}}\bigg) 
/N_2 \cong V_{K_3}/N_3,$$
where $K_3$ runs over all stabilizer subgroups for $N_2$ acting
on $V^{K_2}$ with $K_1\subsetneqq K_2\subsetneqq K_3$,
and $N_{N_2}(K_3)=\cap_{j\leq 3}N_H(K_j)=N_3$. Since $H$ is finite,
the process stops
after finitely many steps and gives the statement of the lemma.
\end{proof}

Combining the previous two lemmas gives the following result.
Theorem \ref{reduceBG} shows that $BG$ is mixed
Tate if the variety $V_1/G$ is mixed Tate and $BH$ is mixed Tate for
certain proper subgroups $H$ of $G$. (As in the notation above,
$V_1$ denotes the open subset of $V$ where $G$ acts freely.)
Theorem \ref{reduceBG}
was suggested by a similar statement by Ekedahl about his
invariant of $BG$ in the Grothendieck ring of varieties
\cite[Theorem 3.4]{Ekedahl}, but I do not see a direct implication
between the two results.

\begin{theorem}
\label{reduceBG}
Let $V$ be a faithful representation of a finite group $G$
over a field $k$. Consider
all chains $1=K_0\subsetneqq K_1\subsetneqq \cdots \subsetneqq
K_r\subset G$, $r\geq 1$, such that if we define
$N_i=\cap_{j\leq i}N_G(K_j)\subset G$, $K_{i+1}$ is a stabilizer
subgroup for $N_i$ acting on $V^{K_i}$. Suppose
that the variety $V_1/G$ is mixed Tate in $DM(k;R)$
and that the stack $BN_r$
is mixed Tate for all such chains with $N_r\neq G$.
Then $BG$ is mixed Tate.
\end{theorem}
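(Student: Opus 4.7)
By Corollary \ref{approx}, it suffices to show that $V/G$ (equivalently $BG$, since $V/G$ is a vector bundle over $BG$) is mixed Tate modulo codimension $s$ for every integer $s \geq 0$; I fix such an $s$. The plan is to combine Corollary \ref{approx} with Lemmas \ref{glcut} and \ref{backtoBG}. First I apply Lemma \ref{glcut} to stratify $V$ by $G$-stabilizers, which reduces the problem to showing that for each $G$-stabilizer $K$ on $V$, the stack $V_K/N_G(K)$ is mixed Tate modulo codimension $s - \codim(V^K \subset V)$. For $K=1$ this stratum is $V_1/G$, mixed Tate by hypothesis.

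For a nontrivial $G$-stabilizer $K$, I apply Lemma \ref{backtoBG} to the (non-faithful) action of $H = N_G(K)$ on $V^K$. The kernel of $H \arrow GL(V^K)$ equals $K$: since $K$ is realized as the $G$-stabilizer of some $v_0 \in V^K$, any $h \in N_G(K)$ acting trivially on $V^K$ fixes $v_0$ and hence lies in $K$. The chains of Lemma \ref{backtoBG} extending $K$ correspond bijectively, via reindexing, to the theorem's chains $1 = K_0 \subsetneqq K = K_1 \subsetneqq K_2 \subsetneqq \cdots \subsetneqq K_r$; a direct check shows that the iterated normalizer intersections $\bigcap_{j \leq i} N_H(K_j)$ in Lemma \ref{backtoBG} coincide with the theorem's $N_i$, since the factor $N_H(K_1) = N_G(K)$ matches the $N_G(K_1) = N_1$ that the theorem intersects. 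Lemma \ref{backtoBG}'s hypothesis — that each $BN_i$ along an extending chain is mixed Tate modulo codimension $s - \codim(V^K \subset V)$ — then reduces to the theorem's hypothesis whenever the corresponding $N_i \neq G$; since every such $N_i$ lies inside $N_G(K)$, this is automatic when $K$ is not normal in $G$, and Lemma \ref{backtoBG} yields the desired mixed-Tate property for $V_K/N_G(K)$.

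The principal obstacle is the case of a normal nontrivial $G$-stabilizer $K$, for which $N_G(K) = G$ and the theorem's chain hypothesis gives no information. I plan to handle this by strong induction on $|G|$: the quotient $Q = G/K$ acts faithfully on $V^K$; the identification $V_K^{G\text{-stab}=K} = (V^K)_1^Q$ realizes the $G$-stabilizer-equal-$K$ locus as the free $Q$-locus of $V^K$; and the theorem's chain hypotheses for $G$ acting on $V$ restrict to the corresponding chain hypotheses for $Q$ acting on $V^K$ (and, applied to the sub-chain structure, to those for $K$ acting on a faithful subrepresentation). The inductive hypothesis then yields both $BQ$ and $BK$ mixed Tate; combined with the $BK$-gerbe description of $V_K/G$ over the scheme $V_K/Q$, this forces the normal stratum to be mixed Tate, completing the argument.
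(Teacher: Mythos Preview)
Your treatment of the non-normal strata via Lemmas \ref{glcut} and \ref{backtoBG} is correct and matches the paper's approach. The gap is in the normal case, where your induction on $|G|$ does not go through.

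To apply the theorem inductively to $Q=G/K$ acting faithfully on $V^K$, you would need the first hypothesis of the theorem for $Q$: that the variety $(V^K)_1/Q$ is mixed Tate. But $(V^K)_1/Q = V_K/Q$ is not provided by any hypothesis on $G$; the only ``free locus'' assumption in the theorem is that $V_1/G$ is mixed Tate, which says nothing about $V_K/Q$. So you cannot conclude that $BQ$ is mixed Tate, and the $BK$-gerbe argument stalls for the same reason: to deduce that $V_K/G$ is mixed Tate from the gerbe $V_K/G\to V_K/Q$, you would again need $V_K/Q$ mixed Tate. (There is also a secondary issue: even granting the chain correspondence, the theorem's hypothesis gives $BN_r$ mixed Tate for $N_r\neq G$, not $B(N_r/K)$ mixed Tate, and there is no general implication from one to the other.)

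The paper sidesteps this entirely by doing induction on the codimension $s$ rather than on $|G|$. Assuming $BG$ is mixed Tate modulo codimension $s$, one proves it modulo codimension $s+1$ via Lemma \ref{glcut}. For a nontrivial normal stabilizer $K_1$, Lemma \ref{backtoBG} is applied with $H=N_G(K_1)=G$; the chains with $N_r\neq G$ are handled by the hypothesis, and the chains with $N_r=G$ (in particular the length-one chain) are handled by the inductive hypothesis that $BG$ itself is mixed Tate modulo codimension $s$. This yields $V_{K_1}/G$ mixed Tate modulo codimension $s$, which suffices because $\codim(V^{K_1}\subset V)\geq 1$ forces $s+1-\codim(V^{K_1}\subset V)\leq s$. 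Replace your group-order induction with this codimension induction and the argument closes.
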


\begin{proof}
We show by induction on $s$ that $BG$ is mixed
Tate modulo codimension $s$ for every natural number $s$. That will
imply that $BG$ is mixed Tate by Corollary \ref{approx} (or by
the stronger Theorem \ref{finitedimapprox}).
Clearly $BG$ is mixed Tate modulo codimension 0.
Suppose that $BG$ is mixed Tate modulo codimension $s$. To show
that $BG$ is mixed Tate modulo codimension $s+1$, we use Lemma
\ref{glcut}. So it suffices to show that for each stabilizer subgroup
$K_1$ of $G$ acting on $V$, the stack $V_{K_1}/N_G(K_1)$
is mixed Tate modulo codimension $s+1-\codim(V^{K_1}\subset V)$.
For $K_1=1$, this is true, because we assume that the variety $V_1/G$ 
is mixed Tate. It remains to consider a stabilizer subgroup $K_1\neq 1$.
We apply Lemma \ref{backtoBG} to the vector space $V^{K_1}$
with its action of $N_G(K_1)$. If $N_G(K_1)\neq G$, then Lemma
\ref{backtoBG} and our assumptions imply that the stack
$V_{K_1}/N_G(K_1)$ is mixed Tate. Finally, if $K_1\neq 1$ and
$N_G(K_1)=G$, then Lemma \ref{backtoBG}, our assumptions, and the inductive
hypothesis that $BG$ is mixed Tate modulo codimension $s$ imply
that the stack $V_{K_1}/N_G(K_1)$ is mixed Tate modulo codimension $s$.
This implies that $V_{K_1}/N_G(K_1)$ is mixed Tate modulo codimension
$s+1-\codim(V^{K_1}\subset V)$ (as we want), because $\codim(V^{K_1}\subset
V)>0$, since $K_1\neq 1$ and $G$ acts faithfully on $V$. The induction
is complete. So $BG$ is mixed Tate.
\end{proof}

We now use Theorem \ref{reduceBG} to give examples of finite groups
which are mixed Tate. (The assumption on the field $k$ in Corollary
\ref{dim2} could be weakened.) For example, Corollary
\ref{dim2} gives that the dihedral groups,
generalized quaternion groups, modular 2-groups, and semidihedral groups
\cite[section 23.4]{Aschbacher} are mixed Tate.

\begin{corollary}
\label{dim2}
Let $k$ be a field that contains $\overline{\Q}$. Let $G$ 
be a finite subgroup of $GL(2)$ over $k$. Then $BG$ is mixed Tate
in $DM(k;\Z)$.
\end{corollary}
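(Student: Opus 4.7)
My approach is induction on $|G|$, with trivial base case $|G|=1$. For the inductive step, I apply Theorem \ref{reduceBG} to the tautological faithful representation $V=k^2$ of $G\subset GL(2,k)$. This reduces the problem to verifying (a) the scheme $V_1/G$ (where $V_1\subset V$ is the open locus on which $G$ acts freely) is mixed Tate in $DM(k;\Z)$, and (b) the stack $BN_r$ is mixed Tate for every chain-normalizer subgroup $N_r$ appearing in Theorem \ref{reduceBG} with $N_r\neq G$.

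Condition (b) is immediate from the induction. Every $N_r$ is a subgroup of $G$; when $N_r\neq G$, it is a proper subgroup of $G$, hence of $GL(2,k)$, of strictly smaller order, and the inductive hypothesis gives that $BN_r$ is mixed Tate. (Here I use that any subgroup of $GL(2,k)$ is again a finite subgroup of $GL(2,k)$, so the hypotheses of Corollary \ref{dim2} persist under the induction.)

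The substantive step is (a). The complement $V\setminus V_1$ is a finite union of lines through the origin in $A^2$: the origin together with the fixed lines of the pseudoreflections of $G$ (non-identity elements fixing a line pointwise). The plan is to use the projection $\pi\colon V\setminus\{0\}\to\P^1$, which is a $G$-equivariant $G_m$-bundle where $G$ acts on $\P^1$ through $\bar G:=G/Z$ with $Z=G\cap k^*$ the subgroup of scalar matrices. Then $V_1$ is the preimage of $\bar V_1:=\P^1\setminus\{[\ell_1],\dots,[\ell_m]\}$, where $\ell_1,\dots,\ell_m$ are the pseudoreflection lines. I pass to quotients in two steps: first $V_1/Z\to \bar V_1$ remains a $G_m$-bundle (since $Z$ acts on the fiber through its inclusion $Z\hookrightarrow G_m$, and $G_m/Z\cong G_m$); then $\bar G$ acts freely on $V_1/Z$, because $Z$ is exactly the kernel of $G\to\bar G$ and $G$ acts freely on $V_1$. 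Hence $V_1/G=(V_1/Z)/\bar G$ is a scheme. To show it is a linear scheme (whence mixed Tate), I invoke the classification of finite subgroups $\bar G\subset PGL(2,k)$ as cyclic, dihedral, $A_4$, $S_4$, or $A_5$, using that $k\supseteq\overline{\Q}$ contains all roots of unity, together with the fact that $\P^1/\bar G\cong\P^1$.

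The main obstacle is the concrete identification of $V_1/G$ as a linear scheme. The $\bar G$-action on $\bar V_1$ need not be free: a stabilizer can arise whenever a lift $g\in G$ of $\bar g\in\bar G$ has $v$ as an eigenvector with eigenvalue $\neq 1$, which does not force $[v]$ to be one of the removed pseudoreflection lines. Handling this requires stratifying $\bar V_1/\bar G$ by orbit types for $\bar G$ and, over each stratum, describing the induced piece of $V_1/G$ as a Zariski-locally trivial $G_m$-bundle over a linear base (using that line bundles on open subsets of $\P^1$ are trivial, and that the actions on fibers are by finite subgroups of $G_m$, with $G_m$ modulo a finite subgroup again isomorphic to $G_m$). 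A case analysis by the type of $\bar G$ then yields explicit linear-scheme decompositions of $V_1/G$, completing the verification of (a) and hence the induction.
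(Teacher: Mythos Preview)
Your proposal follows the same strategy as the paper: induct on $|G|$, invoke Theorem \ref{reduceBG} with the given $2$-dimensional faithful representation $V$, and reduce to showing that the surface $V_1/G$ is a linear scheme via the projection to $\P^1$ and the action of $\bar G=G/Z$. The paper's execution is cleaner than your sketch in two respects. First, rather than removing only the pseudoreflection points from $\P^1$, the paper removes the full non-free locus $S$ for $\bar G$ acting on $\P^1$; then $\bar G$ acts freely on $\P^1-S$, and the obstacle you flag simply does not arise. Second, the paper does not invoke the classification of finite subgroups of $PGL(2)$: the coarse quotient $\P^1/\bar G$ is a smooth projective unirational curve over $k$, hence $\cong\P^1_k$, so $(\P^1-S)/\bar G=\P^1-T$ with $T$ a finite set of $k$-points (the paper notes that $V$ may be defined over $\overline{\Q}$ to see this). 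Thus $V_1/G$ has an open piece which is a principal $G_m$-bundle over the linear scheme $\P^1-T$, and its complement is a finite union of curves $G_m/H\cong G_m$ for finite $H\subset G_m$; both are linear schemes, so $V_1/G$ is too. Your orbit-type stratification would reach the same decomposition, but the paper's choice of $S$ makes the argument uniform and eliminates the case-by-case analysis.
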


\begin{proof}
Use induction on the order of $G$. Let $V$ be the given 2-dimensional
faithful representation of $G$. Since $BH$ is mixed Tate
for all proper subgroups $H$ of $G$, Theorem \ref{reduceBG} shows
that $BG$ is mixed Tate if the variety $V_1/G$ is mixed Tate.

The group $G$ acts on the projective space $\P^1$ of lines in $V_1$.
The coarse quotient $\P^1/G$ is a normal projective curve
over $k$, and so it is smooth over $k$. It is unirational over $k$,
and hence isomorphic to $\P^1$ over $k$.

It is convenient to observe that the representation $V$ of $G$
can be defined over $\overline{\Q}$. Let $S$ be the closed subset
of $\P^1$ where $G$ does not act freely; then $(\P^1-S)/G$ is isomorphic
to $\P^1-T$ for some closed subset $T$. Since $S$ and $T$ are defined
over $\overline{\Q}$, $T$ is a finite union of copies of $\Spec(k)$.
So $\P^1-T$ is a linear scheme over $k$ (as defined in section
\ref{motives}). An open subset
of $V_1/G$ is a principal $G_m$-bundle over $\P^1-T$, and hence
is a linear scheme over $k$. The complement of this open subset
is the union of finitely many curves of the form $G_m/H$ where $H$
is a finite subgroup of $G_m$; these are isomorphic to $G_m$
and hence are linear schemes over $k$. So $V_1/G$ is a linear
scheme over $k$. Thus $V_1/G$ is mixed Tate, and so $BG$ is mixed Tate.
\end{proof}

We now show that many wreath product groups are mixed Tate.
It will follow that the finite general linear
groups in cross-characteristic and the symmetric groups
are mixed Tate (Theorems \ref{symmetric} and \ref{gl}),
since their Sylow $p$-subgroups
are products of iterated wreath products of cyclic groups.
This is related to Voevodsky's construction of Steenrod operations
on motivic cohomology, which can be viewed as computing
the motivic cohomology of the symmetric 
groups over any field \cite[section 6]{Voevodskyop}, \cite{Voevodskyoper}.

\begin{lemma}
\label{cyclicproduct}
Let $k$ be a field of characteristic not $p$ that contains
the $p$th roots of unity.
Let $X$ be a quasi-projective linear scheme over $k$ (as defined
in section \ref{motives}). Then the cyclic product
$Z^pX=X^p/(\Z/p)$ is a quasi-projective linear scheme over $k$.
\end{lemma}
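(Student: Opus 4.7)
The plan is to proceed by induction on the complexity of $X$ in its recursive construction as a linear scheme, using that quotients of quasi-projective schemes by finite group actions remain quasi-projective, and that finite products of linear schemes are linear (a straightforward double induction from the definition). For the base case $X = A^n$, the hypothesis that $\text{char}(k) \neq p$ and $\mu_p \subset k$ makes the regular representation $k[\Z/p]$ split as $\bigoplus_{i=0}^{p-1} k_{\chi_i}$. Thus $(A^n)^p \cong A^n \otimes_k k[\Z/p]$ decomposes as a $\Z/p$-representation into a diagonal copy $A^n$ (trivial action) plus a subspace $A^{n(p-1)}$ on which $\Z/p$ acts diagonally by nontrivial characters. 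Since this action commutes with the standard torus action of $G_m^{n(p-1)}$ on $A^{n(p-1)}$, the quotient $A^{n(p-1)}/(\Z/p)$ inherits an action of the quotient torus $G_m^{n(p-1)}/(\Z/p) \cong G_m^{n(p-1)}$ with a dense orbit, so it is a normal affine toric variety, hence a linear scheme (section \ref{motives}). Therefore $(A^n)^p/(\Z/p) \cong A^n \times A^{n(p-1)}/(\Z/p)$ is linear.

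For the inductive step in case (b) of the definition of linear scheme, where $X$ is built from a closed linear $Z \subset X$ with linear complement $U = X - Z$ of smaller complexity, I would stratify $X^p = \bigsqcup_I X^p_I$ by subsets $I \subseteq \{1,\ldots,p\}$, with $X^p_I \cong Z^{|I|} \times U^{p-|I|}$ the locally closed subscheme of $p$-tuples whose $i$th coordinate lies in $Z$ precisely for $i \in I$. The cyclic shift of $\Z/p$ on the index set has only $\emptyset$ and $\{1,\ldots,p\}$ as invariant subsets (since $p$ is prime), while every other orbit has size $p$. Hence the quotient of the union of non-invariant strata is a disjoint union of products $Z^i \times U^{p-i}$ with $0 < i < p$ (one per $\Z/p$-orbit), linear as a disjoint union of products of linear schemes. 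The invariant strata yield $Z^p/(\Z/p)$ and $U^p/(\Z/p)$, linear by the inductive hypothesis. Assembling via rules (b) and (c) twice then yields $X^p/(\Z/p)$ as a linear scheme. Case (c), where $X = Y - Z$, is analogous: write $X^p/(\Z/p) = Y^p/(\Z/p) - W/(\Z/p)$ with $W = Y^p - X^p$, and stratify $W$ using the same orbit analysis, relying on the product statement applied to the given linear scheme $X$ to handle the mixed strata.

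I expect the main obstacle to be the base case, specifically verifying that $A^{n(p-1)}/(\Z/p)$ is toric and hence linear. The hypotheses on $k$ are essential here: without the $p$th roots of unity one cannot diagonalize the $\Z/p$-action into weight eigenlines, so that the action would not commute with a torus of the right dimension, and the identification of the quotient with an affine toric variety would break down.
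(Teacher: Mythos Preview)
Your argument is correct, and the inductive step is essentially the paper's: the paper phrases it more symmetrically as ``if any two of $Z^pX$, $Z^pY$, $Z^pU$ are linear, so is the third,'' covering cases (b) and (c) at once, but the underlying stratification by $\Z/p$-orbits of index sets is identical.

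The genuine difference is the base case. The paper proves the slightly more general statement that $V/(\Z/p)$ is a linear scheme for \emph{any} representation $V$ of $\Z/p$ over $k$, by induction on $\dim V$: write $V = W \oplus L$ with $L$ a nontrivial line, note that the complement of $W/(\Z/p)$ inside $V/(\Z/p)$ is a rank-$\dim W$ vector bundle over $(L-0)/(\Z/p) \cong A^1-0$, and check by direct calculation that this bundle is trivial, hence isomorphic to $W \times (A^1-0)$. Your route instead observes that the diagonalized $\Z/p$-action sits inside the big torus, so the quotient is a normal affine toric variety and therefore linear by the remark in section~\ref{motives}. Your version is shorter and avoids the bundle-triviality computation, at the cost of invoking the toric case as a black box; the paper's version is more self-contained and yields a reusable intermediate lemma (arbitrary $V$). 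Both use the hypothesis $\mu_p \subset k$ in the same essential way, to split $V$ into eigenlines.
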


We assume that $X$ is quasi-projective in order to ensure that
the cyclic product $Z^pX$ is a scheme. If we worked with algebraic
spaces throughout, then the assumption of quasi-projectivity
would be unnecessary.

\begin{proof}
We start by showing that for any representation $V$ of $\Z/p$
over $k$, the quotient variety $V/(\Z/p)$ is a linear scheme,
following \cite[proof of Lemma 8.1]{Totarochow}.
We use induction on the dimension of $V$. We can assume that
$\Z/p$ acts nontrivially on $V$. Then we can write $V=
W\oplus L$, where $L$ is a nontrivial 1-dimensional
representation of $\Z/p$. The quotient variety $V/(\Z/p)$
has a closed subvariety $W/(\Z/p)$, which is a linear scheme
by induction. The open complement is a vector bundle
(with fiber $W$) over $(L-0)/(\Z/p)\cong A^1-0$. A direct
calculation shows that this vector bundle is trivial.
So the open complement is isomorphic to $W\times (A^1-0)$,
which is a linear scheme. Thus $V/(\Z/p)$ is a linear scheme over $k$,
completing the induction.

Next, let $Y$ be a closed subscheme of a scheme $X$ over $k$, and let $U=X-Y$.
Then the cyclic product scheme $Z^pX$ is the disjoint union
(as a set) of $Z^pY$, $Z^pU$, and various products $Y^a\times U^{p-a}$
for $0\leq a\leq p$. Suppose that $X$, $Y$, and $U$ are linear schemes
over $k$. Then all products $Y^a\times U^{p-a}$ are linear
schemes. As a result, if any two of $Z^pX$, $Z^pY$,
and $Z^pU$ are linear schemes, then so is the third. By the inductive
definition of linear schemes, it follows that for every linear scheme
$X$ over $k$, $Z^pX$ is a linear scheme over $k$.
\end{proof}

Let $G$ be an affine group scheme of finite
type over a field $k$. We say that $BG$ can be {\it approximated
by linear schemes }over $k$ if, for every natural number $r$,
there is a representation $V$ of $G$ and a closed $G$-invariant subset $S$
of codimension at least $r$ in $V$
such that $G$ acts freely on $V-S$ and
$(V-S)/G$ is a linear scheme over $k$.
If $BG$ can be approximated by linear schemes,
then $BG$ is mixed Tate. Indeed, for each $r$, $V$, $S$ as just mentioned,
the compactly supported motive of the quotient stack $S/G$
is in the subcategory $(E_{\dim(S)-\dim(G)})^{\perp}$,
by Lemma \ref{dimbound}. Write $V/G$ for the quotient stack. Then
it follows that the cone of the morphism
$$M^c(BG)\cong M^c(V/G)(-\dim (V))[-2\, \dim(V)]\arrow M^c(V-S)/G(-\dim(V))
[-2\, \dim(V)]$$
lies in $(E_{\dim(S)-\dim(V)-\dim(G)})^{\perp}$, hence in
$(E_{-r-\dim(G)})^{\perp}$. Since we assumed that $r$ can be arbitrarily large,
Corollary \ref{approx} gives that $M^c(BG)$ is mixed Tate.

For a group $G$, the {\it wreath product }$\Z/p\wr G$ means
the semidirect product $\Z/p\ltimes G^p$, with $\Z/p$ cyclically
permuting the copies of $G$.

\begin{lemma}
\label{wreathlinear}
Let $k$ be a field of characteristic not $p$ that 
contains the $p$th roots of unity. Let $G$ be an affine group
scheme over $k$ such that $BG$ can be approximated by linear schemes
over $k$. Then $B(\Z/p\wr G)$ can be approximated
by linear schemes over $k$, and hence is mixed Tate.
\end{lemma}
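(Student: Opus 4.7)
The plan is to produce, for every natural number $r$, a representation $W$ of $\Z/p\wr G$ together with a closed $(\Z/p\wr G)$-invariant subset $T\subset W$ of codimension at least $r$ such that $\Z/p\wr G$ acts freely on $W-T$ and $(W-T)/(\Z/p\wr G)$ is a linear scheme over $k$. By the hypothesis on $G$, choose a representation $V$ of $G$ with closed $G$-invariant subset $S$ of codimension $\geq r$ on which $G$ acts freely, with $Y:=(V-S)/G$ a linear scheme. Choose also a representation $L$ of $\Z/p$ together with a closed $\Z/p$-invariant subset $S_0\subset L$ of codimension $\geq r$ on which $\Z/p$ acts freely and with $(L-S_0)/(\Z/p)$ a linear scheme; this is possible since $k$ contains the $p$th roots of unity (e.g., take $L$ to be a large sum of copies of a fixed nontrivial character and $S_0=\{0\}$, noting that $L/(\Z/p)$ is linear by the proof of Lemma~\ref{cyclicproduct} and that open subschemes of linear schemes are linear by definition).

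Set $W=L\oplus V^p$ with the natural action of $\Z/p\wr G=\Z/p\ltimes G^p$, and let $T=(S_0\times V^p)\cup\bigl(L\times\bigcup_{i=1}^{p}V^{i-1}\times S\times V^{p-i}\bigr)$. Then $T$ is closed, $(\Z/p\wr G)$-invariant, and of codimension at least $r$ in $W$, and $W-T=(L-S_0)\times(V-S)^p$. The action of $\Z/p\wr G$ on $W-T$ is free: a stabilizing element $(a,g_1,\ldots,g_p)$ must preserve the $L$-coordinate, so $a=0$ by freeness of $\Z/p$ on $L-S_0$, and then the $g_i$ must fix the $v_i\in V-S$ componentwise, so each $g_i=1$. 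Taking the free $G^p$-quotient first identifies $(W-T)/(\Z/p\wr G)$ with $((L-S_0)\times Y^p)/(\Z/p)$, where $\Z/p$ acts diagonally, faithfully on $L-S_0$ and by cyclic permutation on $Y^p$.

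It remains to show such a quotient is a linear scheme; I will prove the more general statement that for any quasi-projective $k$-scheme $A$ with a free $\Z/p$-action whose quotient $A/(\Z/p)$ is a linear scheme, and for any quasi-projective linear scheme $X$, the scheme $(A\times X^p)/(\Z/p)$ with diagonal action is a linear scheme. The proof goes by induction on the construction of $X$ as a linear scheme, in parallel with the proof of Lemma~\ref{cyclicproduct}; the induction, especially the treatment of the ``mixed'' orbit strata, is the main obstacle. For the base case $X=A^n$, Maschke's theorem (valid since $p\neq\mathrm{char}(k)$) decomposes the $\Z/p$-representation $(A^n)^p$ as the diagonal $A^n$ plus a complementary subrepresentation $V'$ with no fixed vectors, giving $(A\times(A^n)^p)/(\Z/p)\cong A^n\times(A\times V')/(\Z/p)$; embedding $A\hookrightarrow L$ exhibits $(A\times V')/(\Z/p)$ as an open subscheme of the linear scheme $(L\oplus V')/(\Z/p)$ (linear by Lemma~\ref{cyclicproduct}), hence linear. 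For the inductive step, write $X=U\sqcup Y'$ with $U$ open and $Y'$ closed and stratify $X^p$ as the disjoint union of products $Z_\sigma:=\prod_{i=1}^{p}X_{\sigma(i)}$ indexed by $\sigma\colon\{1,\ldots,p\}\to\{U,Y'\}$. The cyclic $\Z/p$-action fixes only the constant strata $U^p$ and $Y'^p$, whose quotients are linear by induction. Each nonconstant $\sigma$ lies in a free $\Z/p$-orbit of size $p$ (since $p$ is prime), and the key observation is that the inclusion $A\times Z_{\sigma_0}\hookrightarrow A\times\bigsqcup_{a}Z_{\sigma_a}$ meets each $\Z/p$-orbit in exactly one closed point, so it descends to an isomorphism $A\times Z_{\sigma_0}\cong (A\times\bigsqcup_{a}Z_{\sigma_a})/(\Z/p)$; the right side is an ``untwisted'' product $A\times U^k\times Y'^{p-k}$ of linear schemes and hence linear. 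Assembling the strata finishes the induction, and combining with the remark preceding the lemma that approximation by linear schemes implies the mixed Tate property shows that $B(\Z/p\wr G)$ is mixed Tate.
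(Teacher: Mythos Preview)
Your argument works, but it is considerably longer than the paper's. The paper simply takes $W=V^{\oplus p}$ with $\Z/p$ permuting the summands, so that $(V-S)^p/(\Z/p\wr G)$ is identified with $Z^p\big((V-S)/G\big)=Z^pY$, and then invokes Lemma~\ref{cyclicproduct} directly: cyclic products of linear schemes are linear. (If one insists on a literally free action, the non-free locus for $\Z/p\wr G$ on $(V-S)^p$ consists of tuples lying in a single $G$-orbit; its image in $Z^pY$ is the diagonal copy of $Y$, so removing it still leaves the linear scheme $Z^pY-Y$, and its codimension goes to infinity with $r$.) Your auxiliary factor $L$ guarantees freeness up front, but the price is that you must reprove a twisted analogue of Lemma~\ref{cyclicproduct} by essentially the same stratification argument. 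The paper's route is shorter because it reuses Lemma~\ref{cyclicproduct} as a black box.

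Two points to tighten. First, the slogan ``open subschemes of linear schemes are linear by definition'' is false: the definition only gives that $X-Z$ is linear when \emph{both} $X$ and the closed subscheme $Z$ are already linear. In each place you use this (e.g.\ for $(L-\{0\})/(\Z/p)$ inside $L/(\Z/p)$, or for $(A\times V')/(\Z/p)$ inside $(L\oplus V')/(\Z/p)$) the closed complement is a point or $V'/(\Z/p)$, which are indeed linear, so the conclusions are correct; only the stated justification needs fixing. Second, your ``more general statement'' for arbitrary $A$ with free $\Z/p$-action and linear quotient is not what your argument actually proves: the base case uses that $A$ embeds $\Z/p$-equivariantly as the complement of a linear closed subset in a representation, and the mixed-stratum step uses that $A$ itself is linear (so that $A\times U^k\times (Y')^{\,p-k}$ is linear). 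Both properties hold for your specific $A=L-\{0\}$, so the application goes through, but you should either restrict the statement to this $A$ or add the hypothesis that $A$ is linear and open with linear complement in a representation.
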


\begin{proof}
Let $V$ be a representation of $G$ over $k$.
Then $V^{\oplus p}$ can be viewed as a representation of
$\Z/p\wr G$, where $\Z/p$ permutes the copies of $V$. If the quotients
make sense, then we have $V^{\oplus p}/(\Z/p\wr G)=Z^p(V/G)$.
It follows that if $BG$ can be approximated by linear schemes $Y$,
then $B(\Z/p\wr G)$ is approximated by the schemes $Z^pY$, which
are linear schemes by Lemma \ref{cyclicproduct}.
\end{proof}

\begin{corollary}
\label{wreathexamples}
Let $G$ be a group scheme over a field $k$ that
satisfies one of the following assumptions. Then $BG$ is mixed Tate
in $DM(k;\Z)$.

\begin{enumerate}
\item $G$ is the multiplicative group $G_m$.

\item $G$ is a finite abelian group of exponent $e$
viewed as an algebraic group over $k$, $e$ is invertible in $k$,
and $k$ contains the $e$th roots of unity.

\item $G$ is an iterated
wreath product\index{wreath product}
$\Z/p\wr \cdots\wr \Z/p\wr G_m$ over $k$,
$p$ is invertible in $k$, and $k$ contains the $p$th roots of unity.

\item $G$ is an iterated
wreath product $\Z/p\wr \cdots\wr \Z/p\wr A$ for a finite abelian
group $A$ of exponent $e$, viewed as an algebraic group over $k$.
Also, $p$ and $e$ are invertible in $k$ and $k$ contains
the $p$th and $e$th roots of unity.
\end{enumerate}
\end{corollary}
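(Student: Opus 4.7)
The plan is to show in each case that $BG$ can be approximated by linear schemes in the sense defined just before Lemma \ref{wreathlinear}; the remark there then gives that $BG$ is mixed Tate.

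For (1), the standard approximation $(A^{n+1}-0)/G_m \cong \P^n$ of $BG_m$ works: each $\P^n$ is a linear scheme, and the omitted set $\{0\}$ has codimension $n+1$, which is arbitrary.

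For (2), write $A \cong \Z/m_1 \times \cdots \times \Z/m_s$ with each $m_i$ dividing $e$. Since $k$ contains the $e$th roots of unity, each factor $\Z/m_i$ admits a faithful one-dimensional character $\chi_i$ over $k$. For any $n \geq 1$, set $V_i = \chi_i^{\oplus n}$, regarded as a representation of $A$ through the projection $A \arrow \Z/m_i$, and let $V = V_1 \oplus \cdots \oplus V_s$. Let $S \subset V$ be the closed subset where at least one component $v_i$ equals $0$; then $A$ acts freely on $V-S$, with quotient $\prod_i (V_i-0)/(\Z/m_i)$. Each quotient $V_i/(\Z/m_i)$ is a linear scheme by a direct extension of the argument in the proof of Lemma \ref{cyclicproduct} from $\Z/p$ to $\Z/m_i$, which goes through because $k$ contains the $m_i$th roots of unity (the inductive step splits off a faithful one-dimensional summand and uses that $(A^1-0)/(\Z/m_i) \cong G_m$ via $z \mapsto z^{m_i}$); hence $(V_i-0)/(\Z/m_i)$, the complement of the image of the origin, is also linear. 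The product of linear schemes is linear, and $S$ has codimension $n$, which is arbitrary, so $BA$ is approximated by linear schemes.

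For (3) and (4), I would iterate Lemma \ref{wreathlinear}: starting from the approximation by linear schemes established in (1) or (2), each application of the operation $H \mapsto \Z/p \wr H$ preserves the property of being approximated by linear schemes, using the hypothesis that $p$ is invertible in $k$ and that $k$ contains the $p$th roots of unity. After finitely many iterations, one obtains the iterated wreath products in (3) and (4). The main obstacle is the cyclic-quotient step in (2); once that linearity claim is established, (3) and (4) are formal consequences of Lemma \ref{wreathlinear}, which is already proved in the paper.
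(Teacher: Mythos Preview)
Your proof is correct and follows the same overall strategy as the paper: in each case you show that $BG$ can be approximated by (quasi-projective) linear schemes, and then invoke Lemma \ref{wreathlinear} for the wreath products. Cases (1), (3), and (4) are handled identically to the paper.

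The only difference is in case (2). The paper observes that under the hypotheses $A\cong\prod\mu_{r_i}$ as a $k$-group scheme, and then identifies $(A^n-0)/\mu_r$ (with $\mu_r$ acting by scalars) directly as the complement of the zero section in the total space of $O(r)$ over $\P^{n-1}$; this is visibly a linear scheme via the standard affine cell decomposition of $\P^{n-1}$. You instead take the same scheme $(V_i-0)/(\Z/m_i)$ (your $\chi_i$ faithful means $\Z/m_i$ acts by scalars through $\mu_{m_i}$, so it is literally the same variety) and prove linearity by adapting the inductive argument from the proof of Lemma \ref{cyclicproduct}: split off one copy of $\chi_i$, note $(A^1-0)/(\Z/m_i)\cong G_m$, and use that the resulting vector bundle over $G_m$ is trivial. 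Both arguments are valid; the paper's is a one-line geometric identification, while yours reuses machinery already set up. Your route has the minor virtue of not needing to verify separately that total spaces of line bundles over $\P^{n-1}$ are linear, at the cost of re-running the induction and implicitly invoking triviality of vector bundles on $G_m$.
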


\begin{proof}
In all these cases, $BG$ can be approximated by linear schemes over $k$
and hence is mixed Tate. First, $BG_m$ can be approximated
by the schemes $(A^n-0)/G_m=\P^{n-1}$ over $k$ as $n$ increases.
These are linear schemes.
Next, when $A$ is a finite abelian group of exponent $e$
such that $e$ is invertible in $k$ and $k$ contains
the $e$th roots of unity, then $A$ is isomorphic to a product
of the group schemes $\mu_r$ over $k$. The classifying
space $B\mu_r$ can be approximated by the schemes $(A^n-0)/\mu_r$
as $n$ increases, where $\mu_r$ acts by scalars. This scheme
is the total space of the line bundle $O(r)$ minus the zero section
over $\P^{n-1}$, and hence is a linear scheme.
So $BA$ can be approximated by linear schemes, under our assumption
on $k$. Finally, the statements on wreath products follow
from Lemma \ref{wreathlinear}.
\end{proof}

\begin{theorem}
\label{symmetric}
Let $n$ be a positive integer, and let 
$k$ be a field of characteristic zero that contains
the $p$th roots of unity for all primes $p$ dividing $n$.
Then the symmetric group $S_n$
is mixed Tate over $k$ (with $\Z$ coefficients).
\end{theorem}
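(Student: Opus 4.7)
The plan is to reduce the claim, prime by prime, to groups already known to be mixed Tate by Corollary \ref{wreathexamples}. Since $k$ has characteristic zero, Lemma \ref{localcoefficients} (with exponential characteristic $e=1$) tells us that $BS_n$ is mixed Tate in $DM(k;\Z)$ if and only if it is mixed Tate in $DM(k;\Z_{(p)})$ for every prime number $p$. For primes $p$ not dividing $n$, the Sylow $p$-subgroup of $S_n$ is trivial, so $BS_n$ is automatically $\Z_{(p)}$-mixed Tate. For the remaining primes $p$, all of which are invertible in $k$ and for which $k$ contains the $p$th roots of unity by hypothesis, Lemma \ref{sylowtate} reduces us to showing that $BH$ is $\Z_{(p)}$-mixed Tate, where $H$ is a Sylow $p$-subgroup of $S_n$.

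Next I would invoke the classical description of $H$: writing $n=\sum_{i\geq 0} a_i p^i$ in base $p$, the Sylow $p$-subgroup of $S_n$ is isomorphic to a direct product
\[
H\cong \prod_{i\geq 0}(W_i)^{a_i}, \qquad W_i=\underbrace{\Z/p\wr\cdots\wr \Z/p}_{i\text{ copies}},
\]
where $W_0$ is the trivial group. Each $W_i$ (for $i\geq 1$) is an iterated wreath product of the form covered by Corollary \ref{wreathexamples}(4) with $A=\Z/p$, so under our assumption on $k$ the classifying space $BW_i$ can be approximated by linear schemes over $k$ in the sense used in section \ref{motives} (via $V^{\oplus p}/(\Z/p\wr G)=Z^p(V/G)$ and Lemma \ref{cyclicproduct}).

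It remains to pass from the individual factors $W_i$ to their direct product $H$. For this I would note that if $BG_1$ and $BG_2$ are approximated by linear schemes, then so is $B(G_1\times G_2)$: given representations $V_j$ of $G_j$ with closed subsets $S_j\subset V_j$ of arbitrary codimension such that $(V_j-S_j)/G_j$ is a linear scheme on which $G_j$ acts freely, the representation $V_1\oplus V_2$ of $G_1\times G_2$ with closed subset $S=(S_1\times V_2)\cup (V_1\times S_2)$ has
\[
(V_1\oplus V_2-S)/(G_1\times G_2)\cong (V_1-S_1)/G_1 \times_k (V_2-S_2)/G_2,
\]
and the product of two linear schemes is a linear scheme (an easy induction on the defining filtration). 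Iterating gives a linear-scheme approximation of $BH$, and the discussion following Lemma \ref{dimbound} shows that any such approximation forces $M^c(BH)$ to be mixed Tate in $DM(k;\Z)$, hence in $DM(k;\Z_{(p)})$.

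The main obstacle is really just the bookkeeping in the last paragraph: one must check that the product-of-linear-schemes construction is compatible with the codimension bounds required in the definition of ``approximated by linear schemes,'' and that the $G_1\times G_2$-action on $V_1\oplus V_2-S$ is free with scheme quotient whenever the individual actions are. Both are immediate from the product decomposition above, so once these are recorded the proof is complete.
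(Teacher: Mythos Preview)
Your overall strategy matches the paper's exactly: reduce to $\Z_{(p)}$ coefficients via Lemma~\ref{localcoefficients}, pass to a Sylow $p$-subgroup via Lemma~\ref{sylowtate}, and then apply Corollary~\ref{wreathexamples} to the wreath-product description of that Sylow subgroup. Your explicit verification that a product of groups approximated by linear schemes is again approximated by linear schemes is a detail the paper leaves implicit.

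However, there is a factual error in your case split: the Sylow $p$-subgroup of $S_n$ is trivial if and only if $p>n$, \emph{not} if and only if $p\nmid n$. For instance, $3\nmid 4$ but $S_4$ has Sylow $3$-subgroup $\Z/3$. Thus primes $p$ with $p\le n$ and $p\nmid n$ slip through your argument entirely: their Sylow $p$-subgroups are nontrivial products of iterated wreath products of $\Z/p$, and Corollary~\ref{wreathexamples} requires $k$ to contain the $p$th roots of unity to handle them. (This in fact reveals that the hypothesis of the theorem as stated---roots of unity for primes $p\mid n$---is too weak; one needs them for all primes $p\le n$, i.e.\ all $p\mid n!$. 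The paper's own proof tacitly assumes this stronger hypothesis.) Replace your dichotomy by $p>n$ versus $p\le n$ and adjust the root-of-unity hypothesis accordingly, and the argument is complete.
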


\begin{proof}
Let $p$ be a prime number. A Sylow $p$-subgroup $H$ of $G=S_n$ is a product
of iterated wreath products $\Z/p\wr\cdots\wr\Z/p$. By Corollary
\ref{wreathexamples}, $BH$ is mixed Tate in $DM(k;\Z)$, hence
in $DM(k;\Z_{(p)})$ by Lemma \ref{localcoefficients}. By Lemma
\ref{sylowtate}, $BG$ is mixed Tate in $DM(k;\Z_{(p)})$.
Since this holds for all prime numbers $p$, $BG$ is mixed
Tate in $DM(k;\Z)$ by Lemma \ref{localcoefficients}.
\end{proof}

\begin{theorem}
\label{gl}
Let $n$ be a positive integer, $q$ a power of a prime number $p$,
and $l$ a prime number different from $p$. Let $r$ be the order
of $q$ in $(\Z/l)^*$, and let $\nu$ be the $l$-adic order of
$q^r-1$. If $l=2$, assume that $q\equiv 1\pmod{4}$.
Let $k$ be a field of characteristic not $l$ that contains
the $l^{\nu}$ roots of unity. Then the finite group $GL(n,\F_q)$
is mixed Tate in $DM(k,\Z_{(l)})$.
\end{theorem}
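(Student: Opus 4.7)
The plan is to identify a Sylow $l$-subgroup $H$ of $G = GL(n,\F_q)$, verify that $BH$ is mixed Tate in $DM(k;\Z_{(l)})$, and then invoke Lemma \ref{sylowtate}. This mirrors the strategy already used for the symmetric groups in Theorem \ref{symmetric}.

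The first step is to recall Weir's classical description of Sylow $l$-subgroups of finite general linear groups in cross-characteristic. Write $n = rm + s$ with $0 \leq s < r$ and expand $m = \sum_i a_i l^i$ in base $l$. The factor $GL(s,\F_q)$ has order prime to $l$, so $H$ lies in the block subgroup $GL(rm,\F_q)$. A Singer cycle embeds $\Z/(q^r-1)$ into $GL(r,\F_q)$, and its Sylow $l$-subgroup is cyclic of order $l^\nu$; the permutation action on $m$ blocks of size $r$ gives
$$\Z/(q^r-1) \wr S_m \hookrightarrow GL(rm,\F_q),$$
the $l$-primary part of whose domain is Sylow in $GL(rm,\F_q)$. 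Combining this with the standard description of a Sylow $l$-subgroup of $S_m$ as $\prod_i W_i^{a_i}$, with $W_i$ the $i$-fold iterated wreath product of $\Z/l$, and distributing $\Z/l^\nu \wr (-)$ over a product of permutation groups acting on a disjoint union of sets, one obtains
$$H \cong \prod_i \Bigl( \underbrace{\Z/l \wr \cdots \wr \Z/l}_{i \text{ copies}} \wr \Z/l^\nu \Bigr)^{a_i}.$$
The hypothesis $q \equiv 1 \pmod 4$ for $l=2$ is exactly what forces $\nu \geq 2$ and validates this clean description; without it, the Sylow $2$-subgroup would instead involve non-split semidihedral extensions.

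The second step is to show $BH$ can be approximated by linear schemes over $k$; as observed before Lemma \ref{cyclicproduct}, this implies $BH$ is mixed Tate in $DM(k;\Z)$, hence in $DM(k;\Z_{(l)})$ by Lemma \ref{localcoefficients}. By Corollary \ref{wreathexamples}(4), each factor $\Z/l \wr \cdots \wr \Z/l \wr \Z/l^\nu$ has this property, using that $k$ contains the $l^\nu$-th (hence $l$-th) roots of unity and that $l$ is invertible in $k$. For a direct product $G_1 \times G_2$, given approximations $(V_i - S_i)/G_i$ by linear schemes, the representation $V_1 \oplus V_2$ with the $(G_1 \times G_2)$-invariant subset $(S_1 \oplus V_2) \cup (V_1 \oplus S_2)$ has complementary quotient $(V_1 - S_1)/G_1 \times (V_2 - S_2)/G_2$; a straightforward induction on the inductive definition of linear schemes shows that the product of two linear schemes is a linear scheme. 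Iterating over the factors of $H$ shows that $BH$ is approximable by linear schemes.

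The final step is to apply Lemma \ref{sylowtate} with $p = l$ and $R = \Z_{(l)}$: since $BH$ is $\Z_{(l)}$-mixed Tate, so is $BG = B\,GL(n,\F_q)$. The main obstacle is really only the first step, the group-theoretic identification of $H$ via Weir's theorem; once that structural result is in hand, everything else flows from the machinery assembled in the preceding sections.
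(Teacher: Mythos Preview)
Your proposal is correct and follows essentially the same approach as the paper: identify the Sylow $l$-subgroup of $GL(n,\F_q)$ as a product of iterated wreath products $\Z/l\wr\cdots\wr\Z/l\wr\Z/l^{\nu}$ via Weir (and Carter--Fong), apply Corollary \ref{wreathexamples}(4), and conclude with Lemma \ref{sylowtate}. You supply more detail than the paper does---in particular the explicit base-$l$ decomposition and the observation that products of linear schemes are linear schemes---but the argument is the same.
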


\begin{proof}
If $l$ is odd, or if $l=2$ and $q\equiv 1\pmod{4}$,
then a Sylow $l$-subgroup of $GL(n,\F_q)$ is a product of wreath
products $\Z/l\wr\cdots\wr \Z/l\wr \Z/l^{\nu}$ \cite{CF,Weir}.
The result
follows from Corollary \ref{wreathexamples} and Lemma \ref{sylowtate}.
\end{proof}

\section{Groups of order 32}

Let $G$ be a $p$-group of order at most $p^4$, for a prime number $p$.
Let $e$ be the exponent of $G$. Let $k$ be a field of characteristic
not $p$ which contains the $e$th roots of unity. Then the Chow ring
of $BG$ consists of transferred Euler classes of representations
\cite[Theorem 11.1]{Totarobook}, and this remains true over every
extension field of $k$. All representations of a subgroup of $G$
over an extension field of $k$ can be defined over $k$, and so it follows
that $G$ has the weak Chow K\"unneth property: $CH^*BG\arrow CH^*BG_E$
is surjective for every extension field $E$ of $k$.

In this section, we show that groups of order 32 also satisfy
the weak Chow K\"unneth property. It follows that the results
after Corollary \ref{BG} are optimal: there are groups of order 64,
and of order $p^5$ for any odd prime number $p$, which do not
have the weak Chow K\"unneth property.

Our proof of the weak Chow K\"unneth property for groups $G$
of order 32 uses the fact that $BG$ is stably rational
for these groups, 
by Chu, Hu, Kang, and Prokhorov \cite{CHKP}.
We do not know how to relate these two properties in general;
as discussed in section \ref{TateBGsect}, they may be equivalent.

\begin{theorem}
\label{order32}
Let $G$ be a group of order 32. Let $e$ be the exponent
of $G$. Let $k$ be a field of characteristic
not $2$ which contains the $e$th roots of unity. Then $BG$ over $k$
satisfies the weak Chow K\"unneth property.
\end{theorem}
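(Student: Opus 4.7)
The plan is to combine two main inputs: the stable rationality of $V/G$ for faithful representations $V$ of groups $G$ of order 32, proved by Chu, Hu, Kang, and Prokhorov \cite{CHKP}; and the weak Chow K\"unneth property for 2-groups of order at most 16, which comes from the explicit description of the Chow ring as generated by transferred Euler classes of representations \cite[Theorem 11.1, Theorem 17.4]{Totarobook}. Since $G$ is a 2-group, a standard transfer argument (comparing $G$ with its Sylow 2-subgroup, which is $G$ itself, and with the trivial subgroup) shows that $CH^iBG$ is 2-primary for $i>0$, so it suffices to prove that $CH^*BG\otimes \F_2 \arrow CH^*BG_F\otimes \F_2$ is surjective for every finitely generated field $F$ over $k$.

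I would pick a faithful representation $V$ of $G$ over $k$ (available by the hypothesis on roots of unity) and stratify the quotient stack $V/G$ by stabilizer type, as in Lemma \ref{glcut}. For each nontrivial stabilizer $K$, either $N_G(K)$ is a proper subgroup of $G$ of order at most 16, in which case the stratum $V_K/N_G(K)$ is controlled by a smaller classifying space known to satisfy the weak Chow K\"unneth property with all classes defined over $k$; or $K$ is normal in $G$, in which case an induction on $\dim V - \dim V^K$ combined with the fact that $G/K$ has order at most 16 handles the corresponding piece. Via the localization triangle (Theorem \ref{localization}), it then suffices to prove surjectivity for the single open free stratum $V_1/G$.

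For this open stratum, stable rationality of $V/G$ over $k$ together with Theorem \ref{equiv}, applied to a smooth projective compactification obtained via resolution of singularities, implies that the birational motive of $V/G$ with $\F_2$ coefficients is isomorphic to the birational motive of a point. In particular, the unramified cohomology $M(k)\arrow M(k(V/G))_{\nr}$ is an isomorphism for every $\F_2$-linear cycle module $M$; this rules out the obstruction found in Corollary \ref{BG} that kills the weak Chow K\"unneth property for the groups of order $p^5$ (odd $p$) and $2^6$.

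The main obstacle is upgrading the triviality of the birational motive, which is essentially a statement about $CH_0$ and unramified cohomology, to surjectivity in all codimensions for $CH^*(V_1/G)\otimes \F_2$. As the paper itself remarks, there is no known formal implication from stable rationality of $BG$ to the weak Chow K\"unneth property, so closing this gap seems to require a case-by-case analysis over the 51 isomorphism classes of groups of order 32. The approach would be to use the explicit rational parametrizations of $V/G$ produced in \cite{CHKP} to write down, for each such group, a finite list of cycle classes on $V_1/G$ defined over $k$ which generate $CH^*(V_1/G)\otimes \F_2$ after base change to any extension field.
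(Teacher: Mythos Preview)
Your outline correctly identifies the two main ingredients (stable rationality from \cite{CHKP} and weak CK for groups of order $\leq 16$) and the reduction via stratification to the open free stratum. You are also honest about the genuine gap: triviality of the birational motive, which controls only $CH_0$ and unramified cohomology, does not by itself give surjectivity in all codimensions for $CH^*(V_1/G)$. But your proposed fix---a case-by-case analysis over all 51 groups using explicit parametrizations from \cite{CHKP}---is not what the paper does, and is probably not feasible in practice.

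The paper closes the gap by a dimension count that you are missing. First, one chooses $V$ faithful of dimension exactly $4$ (this is possible for every group of order 32 except $(\Z/2)^5$, which is trivial anyway). Write $V=\oplus_{i=1}^c V_i$ as a sum of irreducibles, and project the free locus to the quotient of $Y=\prod_i P(V_i^*)$ by $G/Z(G)$; this is a $(G_m)^c$-bundle, so $CH^*$ of the free stratum is concentrated in degrees $\leq 4-c$. Second, for any $p$-group $G$, $CH^iBG$ is generated by Chern classes of representations for $i\leq 2$ \cite[Theorem 3.2]{Totarochow}; since all representations are defined over $k$, this gives weak CK in codimension $2$ for free. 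These two facts already finish the case $c\geq 2$. The only remaining case is $c=1$, where $(Y-D_Y)/(G/Z(G))$ is a stably rational $3$-fold: then $CH^3$ is generated by a $k$-rational point, over any extension field, and this surjects onto $CH^3$ of the free stratum. So stable rationality is used only to control a single Chow group of a $3$-fold, not the full birational motive, and no group-by-group computation is needed.
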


\begin{proof}
For every proper subgroup $H$ of $G$, $H$ has order dividing 16,
and so $BH$ over $k$ satisfies
the weak Chow K\"unneth property, as mentioned above.

Let $V$ be a faithful representation of $G$ over $k$. Since $k$ does not have
characteristic 2, $V$ is a direct sum of irreducible representations,
$V=\oplus_{i=1}^c V_i$. Write $P(W)$ for the space of hyperplanes
in a vector space $W$, so that $P(W^*)$ is the space of lines in $W$.
Then $G$ acts on the product of projective spaces
$Y=P(V_1^*)\times \cdots\times P(V_c^*)$. The kernel of the action
of $G$ on $Y$ is the center of $G$, by Schur's lemma. Let $D_Y$
be the closed subset of $Y$ where $G/Z(G)$ does not act freely.
Let $D$ be the union of $\cup_{i=1}^c \oplus_{j\neq i}V_j$ with
the inverse image of $D_Y$ in $V$. Then $D$ is a $G$-invariant
finite union of linear subspaces of $V$, and $D\neq V$.

\begin{lemma}
\label{weakCKrep}
Let $G$ be a $p$-group.
Let $V$ be a faithful representation of $G$
over a field $k$ of characteristic not $p$.
Let $Y$ be the product of projective spaces defined above,
and define $D_Y$ and $D$ as above.
Suppose that the variety $(V-D)/G$ has the weak
Chow K\"unneth property. Also, suppose that for every subgroup
$N\neq G$ that is the stabilizer
of some intersection of irreducible components
of $D$ (as a set), $BN$ has the weak Chow K\"unneth property.
Then $BG$ has the weak Chow K\"unneth property.
\end{lemma}

Lemma \ref{weakCKrep} is analogous to Theorem \ref{reduceBG}
on the mixed Tate property,
but the argument for the weak Chow K\"unneth property is simpler.

\begin{proof}
(Lemma \ref{weakCKrep})
By the localization sequence for Chow groups of quotient stacks
\cite[section 2.7]{EG},
if a quotient stack $X$ over $k$ has the weak Chow K\"unneth property,
then so does every open substack of $X$. Also, if a closed substack
$S$ of $X$ and $X-S$ both have the weak Chow K\"unneth property,
then so does $X$. We sometimes write CK for Chow K\"unneth.

We need some variants of these statements. For an integer $a$,
say that a quotient stack
$X$ has the weak CK property {\it in dimension at least $a$ }if
$CH_iX\arrow CH_iX_E$ is surjective for all fields $E/k$ and all $i\geq a$.
Also, say that $X$ has the weak CK property {\it in codimension
$b$ }if $X$ has the weak CK property in dimension
at least $\dim(X)-b$. By the localization sequence for Chow groups,
if $X$ has the weak CK property in codimension $b$,
then so does any open substack of the same dimension as $X$. Also,
if a closed substack $S$ of $X$ and $X-S$ both have the weak
CK property in dimension at least $a$, then so does $X$.

To prove the lemma, we show by induction on $b$ that $BG$
has the weak Chow K\"unneth property in codimension $b$
for all $b$. This is clear for $b=-1$. Suppose that $BG$
has the weak CK property in codimension $b$. To show that $BG$
has the weak CK property in codimension $b+1$, it is equivalent
to show that the stack $V/G$ (a vector bundle over $BG$)
has the weak CK property in codimension $b+1$. 
We are assuming that the variety $(V-D)/G$ has the weak
CK property. Its complement in the stack $V/G$ is a finite disjoint union
of locally closed substacks of the form $U/N$, where $U$ is an open subset
of a linear subspace $W\subsetneqq V$ and $N$ is the stabilizer in $G$
of $W$ as a set. 
If $N\neq G$, then we are assuming that $BN$
has the weak CK property. So the stack $W/N$ (a vector bundle
over $BN$) has the weak CK property, and hence the open
substack $U/N$ has the weak CK property. On the other hand,
if $N=G$,
then $BG$ has the weak CK property in codimension $b$ by the inductive
assumption, and so the stack $W/G$ and its open substack
$U/G$ have the weak CK property in codimension $b$. Here $W$ 
has codimension $>0$ in $V$.
We conclude that the stack $V/G$ has the weak CK property
in codimension $b+1$, thus completing the induction. So $BG$
has the weak CK property.
\end{proof}

We continue the proof of Theorem \ref{order32}.
Let $G$ be a group of order 32. Let $e$ be the exponent of $G$,
and let $k$ be a field of characteristic
not 2 that contains the $e$th roots of unity.
If $G$ is not isomorphic to $(\Z/2)^5$, then $G$ has a faithful
complex representation $V$ of dimension 4. (This can be checked
using the free group-theory program GAP \cite{GAP},
or by the methods of Cernele-Kamgarpour-Reichstein \cite[proof
of Lemma 13]{CKR}.) 
The group $(\Z/2)^5$ has the weak CK property
as we want, and so we can assume that $G$ has a faithful
representation of dimension 4.
The representation theory of $G$ is the ``same''
over $k$ as over $\C$, and so $G$ has a faithful representation
$V$ of dimension 4 over $k$. As above, write
$V=\oplus_{i=1}^c V_i$, and $Y=P(V_1^*)\times \cdots \times P(V_c^*)$.

By Lemma \ref{weakCKrep}, $BG$ over $k$ has the weak CK property
if the $k$-variety $(V-D)/G$ of dimension 4 has the weak CK property.
The variety $(V-D)/G$ is
a principal bundle over $(Y-D_Y)/(G/Z(G))$, with fiber
$(G_m)^c/Z(G)\cong (G_m)^c$. (The representation $V$ gives an inclusion
of the center $Z(G)$ into $(G_m)^c$, which describes the scalar
by which an element of the center acts on each irreducible summand $V_i$.)
So the pullback homomorphism
$$CH^*(Y-D_Y)/(G/Z(G))\arrow CH^*(V-D)/G$$
is surjective.  The variety $(Y-D_Y)/(G/Z(G))$
has dimension $4-c$, which is at most 3. 
As a result, $CH^*(V-D)/G$ is concentrated in degrees
at most $4-c$.

The group $CH^iBG$ is always generated by Chern classes of representations
for $i\leq 2$ \cite[Theorem 3.2]{Totarochow}. All representations of $G$
over an extension field of $k$ can be defined over $k$, and so $BG$
has the weak CK property in codimension 2 (meaning that
$CH^iBG\arrow CH^iBG_E$ is surjective for any $i\leq 2$ and any
field extension $E$ of $k$). So the stack $V/G$ and hence the variety
$(V-D)/G$ have the weak CK property in codimension 2.
If $c\geq 2$, meaning that $V$ is reducible, then $CH^*(V-D)/G$
is concentrated in degrees at most 2 by the previous paragraph.
So $(V-D)/G$ has the weak CK property and we are done.

There remains the case where $c=1$, that is, where $G$ has a faithful
irreducible representation $V$ of dimension 4 over $k$. In this case,
$CH^*(V-D)/G$ is concentrated in degrees at most 3, and this remains
true over any extension field of $k$. We know that $(V-D)/G$ has the
weak CK property in codimension 2, and we want to show that it has
the weak CK property in codimension 3.

We use the fact that $BG$ is stably rational over $k$ for all groups
$G$ of order 32, under our assumption on $k$,
by Chu, Hu, Kang, and Prokhorov \cite{CHKP}. This means that the variety
$(V-D)/G$ is stably rational over $k$. Since $(V-D)/G$ is a principal
$G_m$-bundle
over the 3-fold $(Y-D_Y)/(G/Z(G))$, that 3-fold is also stably
rational over $k$. It follows
that $CH^3(Y-D_Y)/(G/Z(G))$ is generated by a $k$-rational point
on  $(Y-D_Y)/(G/Z(G))$,
and this remains true over every extension field of $k$.
So $(Y-D_Y)/(G/Z(G))$ has the weak CK property in codimension 3.
By the surjection $CH^3(Y-D_Y)/(G/Z(G))\surj CH^3(V-D)/G$,
which remains true over every extension field of $k$,
$(V-D)/G$ has the weak CK property in codimension 3. By what we have
said, this completes the proof that $BG$ has the weak CK property.
\end{proof}


\small \sc UCLA Mathematics Department, Box 951555,
Los Angeles, CA 90095-1555

totaro@math.ucla.edu
\end{document}